\title{Integration with filters}
\author{Emanuele Bottazzi}
\author{Monroe Eskew}
\thanks{The second author wishes to acknowledge the support of the Austrian Science Fund (FWF) through Research Projects P28420 and Y1012.  He is also grateful to Hazel Brickhill, Leon Horsten, Ashutosh Kumar, and Benjamin Miller for some fruitful discussions.}
\date{}
\newtheorem{theorem}{Theorem}
\newtheorem{lemma}[theorem]{Lemma}
\newtheorem{proposition}[theorem]{Proposition}
\newtheorem{corollary}[theorem]{Corollary}
\newtheorem{claim}[theorem]{Claim}
\newtheorem*{definition}{Definition}
\newtheorem{remark}[theorem]{Remark}
\newtheorem{fact}[theorem]{Fact}
\DeclareMathOperator{\dom}{dom}
\DeclareMathOperator{\len}{len}
\DeclareMathOperator{\st}{st}
\DeclareMathOperator{\dep}{dep}
\DeclareMathOperator{\pow}{Pow}
\DeclareMathOperator{\vol}{vol}
\newcommand{\p}{\mathcal{P}}
\newcommand{\la}{\langle}
\newcommand{\ra}{\rangle}
\newcommand{\R}{\mathbb{R}}
\newcommand{\Q}{\mathbb{Q}}
\newcommand{\Z}{\mathbb{Z}}
\newcommand{\N}{\mathbb{N}}
\newcommand{\civita}{\mathcal{R}}
\newcommand{\F}{k}
\newcommand{\sh}{\st}
\newcommand{\shsup}[1]{\st^+#1}
\newcommand{\shinf}[1]{\st^-#1}
\newcommand{\seq}[1]{[#1]_F}
\newcommand{\e}{\varepsilon}
\DeclareMathOperator{\fun}{Fun}
\DeclareMathOperator{\E}{E}
\begin{document}
\maketitle

\begin{abstract}
We introduce a notion of integration defined from filters over families of finite sets.
This procedure corresponds to determining the average value of functions whose range lies in any algebraic structure in which finite averages make sense.
The average values so determined lie in a proper extension of the range of the original functions.
The most relevant scenario involves algebraic structures that extend the field of rational numbers; hence, it is possible to associate to the filter integral an upper and lower standard part. These numbers can be interpreted as upper and lower bounds on the average value of the function that one expects to observe empirically.
We discuss the main properties of the filter integral and we show that it is expressive enough to represent every real integral.
As an application, we define a geometric measure on an infinite-dimensional vector space that overcomes some of the known limitations valid for real-valued measures. 
We also discuss how the filter integral can be applied to the problem of non-Archimedean integration, and we develop the iteration theory for these integrals.
\end{abstract}

In a magazine article \cite{quanta} on problems and progress in quantum field theory, Wood writes of Feynman path integrals, ``No known mathematical procedure can meaningfully average an infinite number of objects covering an infinite expanse of space in general. The path integral is more of a physics philosophy than an exact mathematical recipe.''  Though nonstandard analysis arguably provides a general method for averaging arbitrary collections of objects, perhaps its nonconstructive character keeps it from meriting the description ``exact mathematical recipe.''  In this manuscript, we present a constructive approach using integrals defined from filters over families of finite sets.  This is inspired by the ``non-Archimedean probability'' theory of Benci et al. \cite{MR3046984}
(that in turn drew inspiration from the early results of nonstandard measure theory before the development of Loeb measures, such as \cite{MR315082,10.2307/2041916}).


An advantage of our approach to integration over the classical one is its generality, as it allows us to determine the average value of functions whose range lies in any algebraic structure in which finite averages make sense.  A potential drawback is that the average values so determined typically lie in a proper extension of the algebraic structure with which we start.  In the case of real-valued functions, this means a partially ordered ring with infinite and infinitesimal elements.  However, this can also be seen as an advantage in that it allows for a more fine-grained quantification of the sizes of sets and the behavior of functions.  For example, different nonempty sets may be assigned different nonzero infinitesimal sizes, with the relation between these sizes corresponding to the limiting behavior of finite samples.
The empirical meaning of a series of relations like:
$$m(A_i) \ll m(A_{i+1}); \, m(B_i) \approx r_i m(A_i),$$
for $i \in \N$ and positive reals $r_i$, can be cashed out by saying that for all $i,n\in \mathbb N$, a generic finite sample of points $z$ will have 
$$\frac{|z \cap A_i|}{|z \cap A_{i+1}|} + \left| \frac{|z \cap B_i|}{|z \cap A_i|} - r_i \right| < \frac{1}{n}$$
Classical measures would flatten the description to just give all of these sets measure zero, erasing the information about such statistical phenomena.  

If we use filters that are maximal with respect to inclusion, i.e.\ ultrafilters, then the range of values for our integrals of ordered-field-valued functions will also be an ordered field.
If the functions are real-valued, the use of ultrafilters leads to the hyperfinite counting measures of nonstandard analysis. These measures, together with the Loeb measure construction \cite{loeb1,loeb2,loeb3}, have become the main tool of nonstandard measure theory and can be applied to the study of a variety of mathematical objects. Some examples include generalized functions \cite{cutland1986infinitesimal,BOTTAZZI2019429}, stochastic processes \cite{anderson1976non,keisler1984infinitesimal,perkins1981global,duanmu2021ergodicity}, statistical decision theory \cite{DR}, and mathematical economics \cite{duffie2018dynamic,khan1974some,sun1996hyperfinite,sun1999complete,khan1999non,khan2003exact,duffie2007existence,anderson2008equilibrium,sun2016independent}.

This convenient feature fails for non-maximal filters.  However, we can still do a lot while keeping to a definable setting and avoiding much use of the Axiom of Choice (AC).  In this way, our work has similarities with that of \cite{Henle1,Henle2,Laugwitz1,Laugwitz2}.
However, there are a few places where AC is invoked.  In Theorem \ref{ext}, Proposition \ref{ext_measurable}, and in \S\ref{geomsec}, we rely on background facts from classical analysis that depend on the axiom of countable choice (CC) or the axiom of dependent choices (DC).  In \S\ref{dimsec}, the full AC is used for a transfinite induction.  In \S\ref{sec decomposition}, the Hahn Embedding Theorem makes an appearance.  In \S\ref{transfinite}, AC appears in the form of assuming certain products are nonempty and in the use of Tychonoff's Theorem.

The structure of the manuscript is as follows. In \S\ref{prelim}, we introduce the basic facts and definitions.  In \S\ref{reps}, we show that our integrals can be used to represent many classical integrals.  This representation has the advantage that a complete real-valued measure and its filter representation are definable from one another, whereas a similar representation via hyperfinite counting measures does not carry such a correspondence.
In \S\ref{sec namg}, we discuss some applications of our integrals to geometry.  We construct a non-Archimedean measure on the direct limit of the $\R^n$ that overcomes some of the known limitations of real-valued measures on infinite-dimensional spaces, addresses the Borel-Kolmogorov paradox, and gives rise to a new notion of fractal dimension.  In \S\ref{sec nai}, we discuss the application of our technique to the problem of developing an appropriate notion of integral for non-Archimedean fields.
In \S\ref{products}, we introduce iterated integrals via product filters and discuss the interaction with the standard-part operation.  In \S\ref{transfinite}, we define transfinitely iterated integrals and discuss a few applications.

\section{Basic structures and operations}
\label{prelim}

The context in which our integrals can be defined is quite broad.  We need an infinite set $X$, a fine filter $F$ over $[X]^{<\omega}$, and a divisible Abelian group $G$.  Recall that a filter $F$ over $Z \subseteq \p(X)$ is \emph{fine} when for all $x \in X$, $\{ z \in Z : x \in z \} \in F$.  Since $F$ is a filter, this is equivalent to saying that for all finite $z_0 \subseteq X$, $\{ z \in Z : z_0 \subseteq z \} \in F$.
Recall that a group $G$ is \emph{divisible} when for all $a \in G$ and all positive $n \in \mathbb N$, there is $b \in G$ such that $nb := \underbrace{b + b + \dots + b}_{n \text{ times}} = a$.  By the group axioms, there is at most one such $b$, which we denote by $a/n$ or $n^{-1}a$.  

\subsection{Comparison rings}

Although our notion of integration will make sense for functions taking values in divisible Abelian groups, in the cases of interest, we want more than just a group structure.  Ideally, we would like to work with ordered fields, but our main operation will take us out of this category.  Thus we consider the following larger class of structures.   
Let us say that a structure is a \emph{comparison ring} if it is commutative ring with 1 and it carries a binary relation $<$ with the following properties:

\begin{enumerate}
\item\label{strictpo} $<$ is a strict partial order (i.e.\ transitive and irreflexive).
\item\label{addineq} For all $a,b,c$, if $a<b$, then $a+c<b+c$.
\item\label{multineq} For all $a,b$, if $a,b>0$, then $ab>0$.
\item\label{inversesquare} For all $a$, $a$ has a multiplicative inverse if and only if $a^2>0$.
\end{enumerate}

Let us list some elementary properties of comparison rings that will come in handy:

\begin{proposition}
\label{Karith}
Suppose $K$ is a comparison ring and $a,b,c,d \in K$.
\begin{enumerate}
\item\label{zero<one} $0<1$.
\item\label{+inv} If $a > 0$, then $a$ is invertible and $a^{-1}> 0$.
\item\label{-inv} If $a<0$, then $a$ is invertible and $a^{-1} = -(-a)^{-1}<0$.
\item\label{addineq2} If $a<b$ and $c<d$, then $a+c < b+d$.
\item\label{+scaling} If $a<b$ and $0<c$, then $ac < bc$.
\item\label{recip} $0<a<b$ if and only if $0<b^{-1}<a^{-1}$.
\item\label{Q} The ordered field $\Q$ of rational numbers is a substructure of $K$.
\end{enumerate}
\end{proposition}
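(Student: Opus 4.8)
The plan is to first record two consequences of the translation-invariance axiom \eqref{addineq} that I will use throughout: that $a<b$ holds if and only if $0<b-a$, and that $a<b$ implies $-b<-a$; each follows by adding a suitable element to both sides of the given inequality. I also note that the sum of two positive elements is positive, which comes from \eqref{addineq} and transitivity (if $0<x$ and $0<y$, then $y<x+y$, and chaining with $0<y$ gives $0<x+y$).

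For item \eqref{zero<one}, I observe that $1$ is its own multiplicative inverse, so axiom \eqref{inversesquare} forces $1=1^2>0$. Item \eqref{addineq2} is then immediate: from $a<b$ I get $a+c<b+c$ by \eqref{addineq}, from $c<d$ I get $b+c<b+d$ again by \eqref{addineq}, and transitivity chains these. For item \eqref{+scaling}, I rewrite $a<b$ as $0<b-a$, apply axiom \eqref{multineq} to the positive elements $b-a$ and $c$ to obtain $0<(b-a)c=bc-ac$, and translate back.

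The crux is item \eqref{+inv}. Given $a>0$, axiom \eqref{multineq} yields $a^2=a\cdot a>0$, so axiom \eqref{inversesquare} makes $a$ invertible. Showing $a^{-1}>0$ is the main obstacle, precisely because the order is only \emph{partial}: trichotomy is unavailable, so I cannot dispose of the cases $a^{-1}<0$ and ``$a^{-1}$ incomparable to $0$'' by elimination. Instead I exploit that $a^{-1}$ is itself invertible, whence $(a^{-1})^2>0$ by axiom \eqref{inversesquare}, and then use the identity $a\cdot(a^{-1})^2=a^{-1}$: since $a>0$ and $(a^{-1})^2>0$, axiom \eqref{multineq} gives $a^{-1}>0$ directly. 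Item \eqref{-inv} then reduces to \eqref{+inv} by negation: if $a<0$ then $-a>0$, so $-a$ is invertible with positive inverse, and the computation $a\cdot(-(-a)^{-1})=(-(-a))\cdot(-(-a)^{-1})=(-a)(-a)^{-1}=1$ identifies $a^{-1}=-(-a)^{-1}<0$.

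For item \eqref{recip}, assuming $0<a<b$, item \eqref{+inv} provides positive inverses $a^{-1},b^{-1}$, and multiplying $a<b$ by the positive element $a^{-1}b^{-1}$ via \eqref{+scaling} yields $b^{-1}<a^{-1}$; the converse follows by applying this to the pair $b^{-1},a^{-1}$ together with the involutivity of inversion. Finally, for item \eqref{Q} I consider the map $n\mapsto n\cdot 1$; since sums of positives are positive, every $n\cdot 1$ with $n>0$ is positive and hence nonzero, so $K$ has characteristic $0$ and $\Z$ embeds. By \eqref{+inv} each positive integer multiple of $1$ is invertible, so this embedding extends to the unique ring homomorphism $\Q\to K$, which is injective because $\Q$ is a field. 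Order preservation then reduces, through the homomorphism property, to verifying that a positive rational $p/q$ maps to a positive element, and this follows from axiom \eqref{multineq} applied to $p\cdot 1>0$ and $(q\cdot 1)^{-1}>0$.
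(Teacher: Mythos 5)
Your proof is correct and follows essentially the same route as the paper's: the same use of axiom (\ref{inversesquare}) for $0<1$, the same identity $a\cdot(a^{-1})^2=a^{-1}$ as the key step for item (\ref{+inv}), and the same reduction of item (\ref{recip}) to multiplication by $a^{-1}b^{-1}$. Your treatment of item (\ref{Q}) is phrased slightly more abstractly (via the unique ring homomorphism $\Q\to K$ rather than the paper's explicit induction and the representation $p=ad^{-1}$, $q=bd^{-1}$), but the underlying argument — positives are closed under sums, positive integers are invertible with positive inverses, and order-agreement reduces to positivity of positive rationals — is the same.
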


\begin{proof}
 \hspace{1mm}
\begin{enumerate}
\item $1^{-1} = 1$ so by axiom (\ref{inversesquare}), $1 = 1^2 >0$.
\item If $a>0$, then by axiom (\ref{multineq}), $a^2>0$, so $a$ is invertible.  Then $a^{-1}$ is also invertible, so $(a^{-1})^2 > 0$.  Thus $a(a^{-1})^2 = a^{-1} > 0$.
\item If $a<0$, then axiom (\ref{addineq}) implies $-a > 0$, and $(-a)^2 = (-1)^2 a^2 = a^2$, so $a$ is invertible.  Further, $(-a^{-1})(-a) = (-1)^2 aa^{-1} = 1$, so $(-a)^{-1} = -a^{-1}$.  Since $(-a)^{-1} > 0$, $a^{-1} = -(-a)^{-1}<0$.
\item Applying axiom (\ref{addineq}), we have $a+c<b+c<b+d$.
\item Note that axiom (\ref{addineq}) implies $a<b$ iff $b-a > 0$.  By axiom (\ref{multineq}), $bc-ac > 0$.
\item Apply claims (\ref{+inv}) and (\ref{+scaling}) and multiply the inequalities by $a^{-1}b^{-1}$.
\item First we claim that the natural numbers appear in $K$ under the standard ordering (with $n$ represented in $K$ as $\underbrace{1 + 1 + \dots + 1}_{n \text{ times}}$). This follows by an induction using claim (\ref{zero<one}) and axioms (\ref{strictpo}) and (\ref{addineq}).  Next, for inequalities $-n<m$ among integers where $n>0$, use the inequality established previously for $0<m+n$, and then add $\underbrace{-1 + -1 + \dots + -1}_{n \text{ times}}$ to both sides and apply axiom (\ref{addineq}).  Next note that by claims (\ref{+inv}) and (\ref{-inv}), all nonzero integers have a multiplicative inverse in $K$.  Finally, let us verify that the ordering on the rationals in $K$ agrees with the standard one.  For rational numbers $p,q$, represent them as $p=ad^{-1}$, $q = bd^{-1}$, where $a,b,d$ are integers and $d>0$. Then $\Q  \models p<q$ iff $\Z \models a<b$.  Since the ordering of the integers in $K$ agrees with the ordering of $\Z$, $K \models a<b$ iff $\Z \models a<b$.  Multiplying both sides by $d^{-1}$ and applying claims (\ref{+inv}) and (\ref{+scaling}) yields $\Q \models p<q$ iff $K \models ad^{-1} < bd^{-1}$.
\qedhere
\end{enumerate}
\end{proof}


Note that a comparison ring $K$ is a divisible Abelian group, since by item (\ref{Q}), $n^{-1}$ exists in $K$ for each positive integer $n$.  For any $a \in K$, $n(n^{-1}a) = a$.

Let us introduce some terminology and notation.  Let $K$ be a comparison ring, and let $a,b \in K$.
\begin{itemize}
\item We say $a$ is \emph{finite} when $-n< a < n$ for some $n \in \mathbb N$, and \emph{infinite} when it is not finite.
Note that the set of finite elements forms a subring.
\item If $b>0$ and $-b<na<b$ for all $n\in\Z$, then we write $a \ll b$.  Note that 
the set $\{ a \in K : a \ll b \}$ is closed under addition and under multiplication by finite elements.
\item We say $a$ is \emph{infinitesimal} when $a \ll 1$.
\item We say $a \sim b$ when $a-b$ is infinitesimal.
\item We say $a \approx b$ when $b$ is invertible and $ab^{-1} \sim 1$.  Note that this implies $a$ is also invertible, because $1/2 < (ab^{-1})^2$ and so $0<b^2/2<a^2$.  Thus also $ba^{-1} \sim 1$.
\item We say that $a,b>0$ are \emph{Archimedean-equivalent} if there are $n,m \in \N$ such that $a<nb$ and $b<ma$.
\end{itemize}



\subsection{The reduced power construction}
We recall briefly the properties of the reduced power construction relevant for the development of the filter integral.
The interested reader can find a more general presentation with all the proofs we have omitted in Section V.2 of \cite{universal}.
The approach to infinite and infinitesimal numbers of Laugwitz \cite{Laugwitz1,Laugwitz2}, recently popularized by Henle \cite{Henle1,Henle2}, is also based on a similar reduced power construction of $\R$ with a different index set.

Suppose $K$ is a comparison ring, $Z$ is a set, and $F$ is a filter over $Z$.  Consider the ring $\fun(Z,K)$ of functions $f: Z \to K$.
Define an equivalence relation $\equiv_F$ on $\fun(Z, K)$ by putting $f \equiv_F g$ if and only if the set $\{ z : f(z) = g(z) \} \in F$.
We will denote by $[f]_F$ the equivalence class of $f$ in the quotient $\fun(Z, K)/\equiv_F$, which we will write as $\pow(K,F)$.
The 0 and 1 of $\pow(K,F)$ are interpreted as the equivalence classes of the constant functions with value 0 or 1 respectively in $K$.
Then the operations and the order relation on the quotient are defined pointwise modulo the filter:
\begin{itemize}
	\item $[f]_F+[g]_F=[h]_F$ iff $\{ z : f(z)+g(z) = h(z) \} \in F$;
	\item $[f]_F\cdot [g]_F=[h]_F$ iff $\{ z : f(z) \cdot g(z) = h(z) \} \in F$;
	\item $[f]_F < [g]_F$ iff $\{ z : f(z) < g(z) \} \in F$.
\end{itemize}
The above definitions do not depend on the choice of the representatives.

We can identify every element $a \in K$ with the equivalence class of the constant function $f_a(z) = a$ for every $z \in Z$, so we can identify $K$ with the set $\{ [f_a]_F : a \in K \}\subseteq \pow(K,F)$.
This identification induces a natural embedding $K \hookrightarrow \pow(K,F)$ (see e.g.\ Lemma 2.10 of Chapter V of \cite{universal}).
We will sometimes write $\seq{a}$, or even just $a$, instead of $\seq{f_a}$.

If $K$ is an ordered field, then usually $\pow(K,F)$ is not an ordered field, because if $F$ is not maximal, we lose the existence of multiplicative inverses for all nonzero elements and the totality of the ordering.
Suppose $X$ is an infinite set, $F$ is a fine filter on $[X]^{<\omega}$, and $K$ is a comparison ring.
Let $f : [X]^{<\omega} \to K$ be defined as
\begin{equation}\label{eqn zerodivisor}
f(z) =
\left\{
\begin{array}{ll}
1 & \text{if } |z| \text{ is even,}\\
0 & \text{if } |z| \text{ is odd.}
\end{array}
\right.
\end{equation}
Then $\seq{f} = 1$ if and only if $\{z \in [X]^{<\omega} : |z| \text{ is even}\} \in F$, $\seq{f} = 0$ if and only if $\{z \in [X]^{<\omega} : |z| \text{ is odd}\} \in F$. If neither set is in $F$, $\seq{f}\ne 0$ and $\seq{f} \ne 1$.
If $F$ is the minimal fine filter, the latter case is true.
Since $\seq{f}\ne 1$ and $\seq{f}\ne 0$ but  $\seq{f}(1-\seq{f}) = 0$, $\seq{f}$ is a zero-divisor.
Moreover, $\seq f$ is order-incomparable with $1-\seq f$ , $- \seq f$,
and with every $a \in K$ such that $0<a<1$.

However, it is easy to check that being a comparison ring is preserved:
\begin{lemma}
\label{comparisonpower}
If $K$ is a comparison ring and $F$ is a filter over a set $Z$, then $\pow(K,F)$ is also a comparison ring.
\end{lemma}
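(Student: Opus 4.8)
The plan is to first dispatch the algebraic part: the reduced power of a commutative ring with $1$ is again a commutative ring with $1$, with the constant classes $\seq{0}$ and $\seq{1}$ serving as the additive and multiplicative identities. This is part of the standard reduced-power package recalled above (and proved in \cite{universal}), so I would cite it rather than reprove it. The real content is the verification of the four order axioms, and in each case the strategy is the same \L o\'s-style translation: an assertion about equivalence classes unwinds, via the pointwise-modulo-$F$ definitions of the operations and of $<$, into the assertion that some explicitly described subset of $Z$ belongs to $F$. Throughout I would lean on the two defining features of a (proper) filter, namely closure under finite intersection and upward closure, together with $\emptyset \notin F$.

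For axiom (\ref{strictpo}), irreflexivity holds because $\seq{f} < \seq{f}$ would force $\{ z : f(z) < f(z) \} = \emptyset \in F$, contradicting properness; transitivity follows by intersecting the two witnessing sets for $\seq{f}<\seq{g}$ and $\seq{g}<\seq{h}$, applying transitivity of $<$ in $K$ at each point of the intersection, and then invoking upward closure. Axioms (\ref{addineq}) and (\ref{multineq}) are handled identically: given that the witnessing set(s) for the hypotheses lie in $F$, I would intersect them, apply the corresponding axiom of $K$ pointwise on the intersection, and conclude that the set witnessing the desired inequality contains this intersection and hence lies in $F$.

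The one axiom deserving care, and which I expect to be the crux, is (\ref{inversesquare}), the biconditional linking invertibility to $a^2>0$. This is precisely the sort of statement---a biconditional carrying an existential quantifier---that reduced products do \emph{not} preserve in general, so it cannot be waved through as routine. For the forward direction, if $\seq{f}$ has an inverse $\seq{g}$, then $\{ z : f(z)g(z) = 1 \} \in F$, and on this set $f(z)$ is invertible in $K$, so axiom (\ref{inversesquare}) in $K$ gives $f(z)^2 > 0$ there; upward closure then yields $\seq{f}^2 = \seq{f^2} > 0$. The backward direction is where a construction is needed: assuming $A := \{ z : f(z)^2 > 0 \} \in F$, axiom (\ref{inversesquare}) in $K$ makes $f(z)$ invertible for every $z \in A$, and---this is the key point---the multiplicative inverse in a ring is \emph{unique}, so I may define $g(z) = f(z)^{-1}$ for $z \in A$ and $g(z) = 0$ otherwise, with no appeal to choice. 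Then $\{ z : f(z)g(z) = 1 \} \supseteq A \in F$, so $\seq{f}\,\seq{g} = 1$ and $\seq{f}$ is invertible. I would emphasize that it is exactly the uniqueness of inverses that lets us exhibit the witness function explicitly, and this is why the biconditional (\ref{inversesquare}) survives the passage to $\pow(K,F)$ even when $F$ is far from being an ultrafilter.
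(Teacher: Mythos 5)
Your proposal is correct and follows essentially the same route as the paper: the paper's proof is a sketch that checks only axiom (\ref{inversesquare}), and it does so exactly as you do, defining $g(z)=f(z)^{-1}$ on the witnessing set $A\in F$ and $g(z)=0$ elsewhere for one direction, and reading off $f(z)^2>0$ from $f(z)g(z)=1$ on a set in $F$ for the other. Your additional remarks---that uniqueness of inverses makes the witness function definable without choice, and the explicit filter-closure verifications for the remaining axioms---are consistent with, and slightly more detailed than, what the paper records.
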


\begin{proof}[Proof (sketch)]
The verification of each axiom is easy, so let us just check the axiom on the existence of inverses as an example.
If $\seq f \cdot \seq f > \seq 0$, then for some $A \in F$, $f(z)^2 > 0$ for all $z \in A$.  Thus $f(z)^{-1}$ exists for all $z \in A$, and if $g(z) = f(z)^{-1}$ on $A$ and otherwise $g(z) = 0$, then $\seq f \cdot \seq g = \seq 1$.  Conversely, if $[f]_F$ has a multiplicative inverse $[g]_F$, then there is $A \in F$ such that $f(z)g(z) = 1$ for all $z \in A$.  Thus $f(z)^2 > 0$ for all $z \in A$, and so $\seq f^2 > \seq 0$.
\end{proof}

If $F$ is a fine filter, then $\pow(K,F)$ also contains infinite elements.  To see that this is the case, define $f(z) = |z|$ for every $z \in [X]^{<\omega}$.
Then for all $n \in \N$, $n < \seq{f}$, since $n < f(z)$ for large enough $z$.

An additional property of the order that will be relevant for our approach to integration is the following:  If $a$ is a positive infinitesimal in $K$, then $\seq{a} \ll \seq{f}$ for every positive $f : Z \to \Q$.
This is a consequence of the inequality $a \ll f(z)$ for every $z \in Z$.

\subsection{The standard part in the reduced power construction}
In the reduced power of a comparison ring $K$, it is in general false that every finite element is infinitesimally close to a real number. An example is provided by $\seq{f}$ with $f$ defined by equation \eqref{eqn zerodivisor}: if $F$ does not decide the equalities $\seq{f}=0$ and $\seq{f}=1$, $\seq{f}$ is finite but it is not infinitesimally close to any real number.
Thus, in general it is not possible to define a standard part for an element of $\pow(K,F)$.

However it is possible to define a superlinear ``lower standard part'' and a sublinear ``upper standard part''.

\begin{definition}\label{def upper and lower standard part}
	For a comparison ring $K$
	, we define the upper standard part of $a \in K$ as
	$$
	\shsup{a} =
	\inf\{ q \in \Q : a < q \}
	$$
	and the lower standard part of $a$ as
	$$
	\shinf{a} =
	\sup\{q \in \Q : a > q\}.
	$$
	We follow the convention that $\inf \emptyset = \sup \Q = \infty$ and $\sup \emptyset = \inf \Q = \ -\infty$.
	
	We say that $a\in K$ has a standard part if the upper standard part and the lower standard part are equal. In this case, we define $\sh{a} = \shsup{a} = \shinf{a}$.
\end{definition}

\begin{lemma}\label{lemma properties of shsup, shinf}
	For every finite $a,b \in K$,
	\begin{itemize}
		\item $\shsup{a} \geq \shinf{a}$;
		\item $\shsup{a} = - \shinf{(-a)}$;
		\item $\shsup{(a+b)} \leq \shsup{a} + \shsup{b}$;
		\item $\shinf{(a+b)} \geq \shinf{a} + \shinf{b}$;
		\item If $a,b \geq 0$, then $\shinf{a} \cdot \shinf{b} \leq \shinf{(a \cdot b)} \leq \shsup{(a \cdot b)} \leq \shsup{a} \cdot \shsup{b}$.
	\end{itemize}
\end{lemma}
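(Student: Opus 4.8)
The plan is to reduce every item to a comparison between rational cuts, using throughout that the copy of $\Q$ inside $K$ is totally ordered and order-isomorphic to the usual rationals (Proposition \ref{Karith}(\ref{Q})). I would write $U_a = \{ q \in \Q : a < q \}$ and $L_a = \{ q \in \Q : q < a \}$, so that $\shsup{a} = \inf U_a$ and $\shinf{a} = \sup L_a$. Since $a$ is finite, there is $n \in \N$ with $-n < a < n$; hence $U_a$ is nonempty and bounded below by $-n$ while $L_a$ is nonempty and bounded above by $n$. Thus both standard parts are genuine real numbers and all the $\inf$/$\sup$ manipulations below take place in $\R$.

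For the first item I would observe that any $p \in L_a$ and $q \in U_a$ satisfy $p < a < q$, so $p < q$ by transitivity (axiom (\ref{strictpo})); as this holds for all such $p,q$, we get $\sup L_a \leq \inf U_a$, i.e.\ $\shinf{a} \leq \shsup{a}$. For the second item the key point is that $x \mapsto -x$ is an order-reversing bijection of $\Q$: using axiom (\ref{addineq}) one checks $q < -a \iff a < -q$, so this map carries $L_{-a}$ bijectively onto $U_a$. Hence $\shinf{(-a)} = \sup L_{-a} = -\inf U_a = -\shsup{a}$, which rearranges to the claim. I would record the equivalent form $\shinf{a} = -\shsup{(-a)}$ for later use.

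The two additive items are dual, and I would prove the upper one directly and deduce the lower one. If $q \in U_a$ and $r \in U_b$, then Proposition \ref{Karith}(\ref{addineq2}) gives $a+b < q+r$, so $q+r \in U_{a+b}$; choosing $q,r$ within $\e/2$ of $\shsup{a},\shsup{b}$ respectively shows $\shsup{(a+b)} \leq \shsup{a} + \shsup{b} + \e$ for every $\e > 0$, giving the third item. The fourth item then follows by applying the third item to $-a$ and $-b$ and invoking the second item twice: $\shinf{(a+b)} = -\shsup{((-a)+(-b))} \geq -\shsup{(-a)} - \shsup{(-b)} = \shinf{a} + \shinf{b}$.

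The multiplicative item is where I expect the real work, both because it needs Proposition \ref{Karith}(\ref{+scaling}) rather than mere additivity and because it requires sign bookkeeping. Its middle inequality is just the first item applied to $ab$. For the outer ones I would first note that $a,b \geq 0$ forces $\shinf{a}, \shinf{b}, \shsup{a}, \shsup{b} \geq 0$, that every element of $U_a$ is in fact a positive rational (since $q > a \geq 0$), and that $\sup(L_a \cap [0,\infty)) = \shinf{a}$. Assuming $a,b > 0$, for positive $q \in U_a$ and $r \in U_b$ two applications of Proposition \ref{Karith}(\ref{+scaling}) together with commutativity give $ab < qb < qr$, so $qr \in U_{ab}$; since all rationals involved are positive, the infimum distributes over the product and we obtain $\shsup{(ab)} \leq \shsup{a} \cdot \shsup{b}$. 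Dually, for nonnegative $p \in L_a$ and $s \in L_b$ the same axiom yields $ps < ab$, so $ps \in L_{ab}$, and multiplicativity of $\sup$ over nonnegative reals gives $\shinf{a} \cdot \shinf{b} \leq \shinf{(ab)}$. The remaining care is with the degenerate cases $a = 0$ or $b = 0$ (and the sub-case where $a$ or $b$ is a positive infinitesimal, so that its lower standard part is $0$); these I would dispatch separately, since then the relevant product of standard parts is $0$ and the inequality is immediate. I expect this sign-and-edge-case analysis to be the main obstacle, the rest being straightforward cut arithmetic.
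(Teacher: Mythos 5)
Your proof is correct and takes essentially the same route as the paper: the paper's own proof is a one-sentence remark deferring everything to the properties of $\inf$ and $\sup$ together with Proposition \ref{Karith}, and your cut-arithmetic argument (including the duality via $\shsup{a}=-\shinf{(-a)}$ and the sign/edge-case analysis for the product) is exactly the omitted verification.
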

\begin{proof}
	These inequalities follow from and the properties of $\inf$ and $\sup$ together with Proposition \ref{Karith}.
\end{proof}

\begin{lemma}
\label{finitestd}
Suppose $K$ is a comparison ring extending $\R$.  An element $a \in K$ has a finite standard part if and only if there is $r \in \R$ such that $a \sim r$.
\end{lemma}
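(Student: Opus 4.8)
The plan is to translate both sides of the equivalence into statements about rational bounds on $a$, exploiting that $K$ extends $\R$ as an ordered substructure, so that any strict inequality between real numbers also holds in $K$. The first step is a convenient rewriting of the relation $a \sim r$. By Proposition \ref{Karith}, the positive integers are invertible and order-preserving, so $a - r$ being infinitesimal (that is, $a - r \ll 1$) is equivalent to $-1/n < a - r < 1/n$ for every positive integer $n$, i.e.\ to $r - 1/n < a < r + 1/n$ for all such $n$. Thus $a \sim r$ amounts to saying that $a$ is squeezed between reals arbitrarily close to $r$ from both sides, and recall that $a$ having a \emph{finite} standard part means $\shsup{a} = \shinf{a}$ with this common value lying in $\R$.

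For the forward direction, suppose $a$ has a finite standard part $r = \sh{a} = \shsup{a} = \shinf{a} \in \R$. From $\shsup{a} = \inf\{ q \in \Q : a < q\} = r$ I would argue that every rational $q > r$ satisfies $a < q$: since $r$ is the greatest lower bound of a set that is nonempty and bounded below (as the infimum is a finite real), no $q > r$ is a lower bound, so there is a rational $s$ with $a < s < q$, whence $a < q$ by transitivity. Symmetrically, $\shinf{a} = r$ gives $a > q$ for every rational $q < r$. Now for each positive integer $n$ I choose rationals with $r < q_n^+ < r + 1/n$ and $r - 1/n < q_n^- < r$; then $a < q_n^+ < r + 1/n$ and $a > q_n^- > r - 1/n$, so $r - 1/n < a < r + 1/n$ for all $n$. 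By the opening observation, $a \sim r$.

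For the converse, suppose $a \sim r$ with $r \in \R$, so that $r - 1/n < a < r + 1/n$ for all positive $n$. I would compute $S := \{ q \in \Q : a < q\}$ directly. If $q \in \Q$ with $q > r$, pick $n$ with $r + 1/n < q$ (possible since $q - r$ is a positive real); then $a < r + 1/n < q$, so $q \in S$. If $q \in \Q$ with $q < r$, pick $n$ with $q < r - 1/n$; then $q < r - 1/n < a$, and $a < q$ would give $q < q$ by transitivity, contradicting irreflexivity, so $q \notin S$. Hence $S$ contains every rational above $r$ and none below, giving $\shsup{a} = \inf S = r$; the identical computation applied to $\{ q \in \Q : a > q\}$ yields $\shinf{a} = r$, so $\sh{a} = r$ is a finite real. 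The only delicate point throughout is the transfer of strict inequalities between real numbers into $K$ and the ability to interpolate a rational between $a$ and $r \pm 1/n$; this is exactly where the hypothesis that $K$ extends $\R$ is used (extending Proposition \ref{Karith}(\ref{Q}) from $\Q$ to $\R$), and beyond this the argument is routine bookkeeping with the partial order.
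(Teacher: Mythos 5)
Your proof is correct and follows essentially the same route as the paper's: both directions reduce to squeezing $a$ between rationals within $1/n$ of $r$ and transferring real inequalities into $K$ via the ordered embedding of $\Q$ (resp.\ $\R$). The only difference is bookkeeping — you unwind the $\inf/\sup$ definitions to show that \emph{every} rational above (below) $r$ bounds $a$, where the paper just extracts two sufficiently close rationals — and this changes nothing essential.
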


\begin{proof}
Suppose first that $a - r$ is infinitesimal, where $r \in \R$.  Let $q < r$ be rational, and let $n \in \N$ be such that $r-q>1/n$.  Since $-1/n < a-r < 1/n$, we have $a-q = (a-r) + (r-q) > 0$, so $a>q$.  Similarly, $a<p$ for any rational $p>r$.  Hence, $\st(a) = r$.

For the other direction, suppose $\st(a) = r \in \R$.  Let $n \in \N$ be arbitrary.  Let $q_0,q_1$ be rational numbers such that $q_0<a<q_1$ and $q_1-q_0<1/n$.  We have that $a-r < q_1 - r<1/n$ and $r-a < r-q_0<1/n$.  Thus $-1/n < a-r < 1/n$.  Since $n$ was arbitrary, $a-r$ is infinitesimal.
\end{proof}

In general it is false that every finite invertible element of a comparison ring has a standard part. 
For instance, let $f$ be defined as in \eqref{eqn zerodivisor} and let $F$ be a filter that decides neither $\seq{f}=0$ nor $\seq{f}=1$.  Then $\seq{f}+1$ is invertible in $\pow(\Q,F)$ and its inverse is the equivalence class of the function
$$
g(z) =
\left\{
\begin{array}{ll}
\frac{1}{2} & \text{if } |z| \text{ is even,}\\
1 & \text{if } |z| \text{ is odd.}
\end{array}
\right.
$$
However $\shsup(\seq{f} +1) = 2 \ne 1 = \shinf(\seq{f} +1)$.

If $F$ is an ultrafilter and $K$ is an ordered ring, then it is well-known that every finite $a \in \pow(K,F)$ has a standard part.

\subsection{The filter integral}

\begin{definition}\label{def filter integral}
	Let $G$ be a divisible Abelian group and $F$ a fine filter over $[X]^{<\omega}$.
	We define an operator that assigns to functions $f : X \to G$ a value in $\pow(G,F)$. 
	$$\int f \, dF := \left[ z \mapsto \sum_{x \in z} f(x)/|z| \right]_F$$
\end{definition}

The values $\sum_{x \in z} f(x)/|z|$ give an approximation to the integral $\int f \, dF$ by looking at the average behavior of $f$ on finite snapshots of $X$.  They approximate it in the sense that we obtain $\int f \, dF$ by letting $z$ ``converge to $X$'' via $F$.

We have that for any $c \in G$, $\int c \, dF = \seq c$.  Furthermore, for any functions $f,g : X \to G$, $\int (f +g) \, dF = \int f \, dF + \int g \, dF$.  This is because
\begin{align*}
\int (f +g) \, dF	&= \left[z \mapsto  |z|^{-1} \sum_{x \in z} \left( f(x)+g(x) \right) \right]_F \\
			&= \left[z \mapsto  \sum_{x \in z} f(x)/|z| + \sum_{x \in z} g(x)/|z| \right]_F \\
			&= \left[z \mapsto  \sum_{x \in z} f(x)/|z|\right]_F  + \left[z \mapsto \sum_{x \in z} g(x)/|z| \right]_F \\
			&=  \int f \, dF + \int g \, dF.
\end{align*}
Moreover, when $G$ has a ring structure, the integral is a linear operator.

Suppose $K$ is a comparison ring.  In general the non-strict inequality is not very well-behaved in reduced powers of $K$.  For a filter $F$ on $Z$, it may be the case that for some functions $f,g : Z \to K$, we have $f(z) \leq g(z)$ for all $z \in Z$, but it is not the case that either $f(z) < g(z)$ on a set in $F$ or $f(z) = g(z)$ on a set in $F$.  However, integrals via fine filters behave better.  Suppose $F$ is a fine filter on $[X]^{<\omega}$, and $f,g : X \to K$ are functions such that $f(x) \leq g(x)$ for all $x \in X$.  Then either $f = g$, or there is an $x_0 \in X$ such that $f(x_0) < g(x_0)$.  In the latter case, for all $z \in [X]^{<\omega}$ with $x_0 \in z$, we have $\sum_{x \in z} f(x) < \sum_{x \in z} g(x)$, and thus $\int f \, dF < \int g \, dF$.  Thus we can say that if $f \leq g$, then $\int f \, dF \leq \int g \, dF$.


\begin{lemma}
\label{finiteslice}
Suppose $F$ is a fine filter over $[X]^{<\omega}$, $K$ is a comparison ring, and $f,g : X \to K$ are such that $f(x) \sim g(x)$ for all $x \in X$. Then $\int f \, dF \sim \int g \, dF$.
\end{lemma}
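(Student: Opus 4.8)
The plan is to reduce the statement to showing that the difference $\int f \, dF - \int g \, dF$ is infinitesimal in $\pow(K,F)$. Since the ring operations of $\pow(K,F)$ are computed pointwise modulo $F$, this difference equals $[s]_F$, where $s(z) = |z|^{-1}\sum_{x \in z}\bigl(f(x) - g(x)\bigr)$; equivalently, by the additivity of the filter integral noted above, it is $\int (f-g)\,dF$. By hypothesis each value $f(x) - g(x)$ is infinitesimal in $K$, i.e.\ $f(x)-g(x) \ll 1$, so it suffices to verify that $[s]_F \ll [1]_F$.

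First I would argue that each individual value $s(z)$ is already infinitesimal in $K$. Fix a finite $z \in [X]^{<\omega}$. The sum $\sum_{x \in z}\bigl(f(x)-g(x)\bigr)$ is a finite sum of infinitesimals, hence infinitesimal, because the set $\{a \in K : a \ll 1\}$ is closed under addition, as observed after the definition of $\ll$. Moreover $|z|^{-1}$ is a positive rational and therefore a finite element of $K$, so multiplying by it keeps us inside the infinitesimals, again by the closure of $\{a \in K : a \ll 1\}$ under multiplication by finite elements. Thus $s(z) \ll 1$ in $K$ for every $z \in [X]^{<\omega}$.

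Finally I would transfer this pointwise fact to the reduced power. To see $[s]_F \ll [1]_F$, one must check that for every $n \in \Z$ the set $\{z : -1 < n\, s(z) < 1\}$ lies in $F$. But since $s(z) \ll 1$ holds in $K$ for every single $z$, the inequality $-1 < n\, s(z) < 1$ is valid for all $z$ and all $n \in \Z$; hence the relevant set is all of $[X]^{<\omega}$, which belongs to $F$ because $F$ is a filter. Therefore $[s]_F = \int f\,dF - \int g\,dF$ is infinitesimal, which is exactly the assertion $\int f\,dF \sim \int g\,dF$.

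I expect essentially no obstacle here beyond bookkeeping. The one point worth isolating is the last step: an element of $\pow(K,F)$ whose representative takes infinitesimal values at \emph{every} index is itself infinitesimal, since the witnessing set is the whole index set. This is what makes the passage from the pointwise hypothesis to the reduced-power conclusion automatic, in contrast to the subtle failure of the non-strict order flagged in the discussion preceding the lemma.
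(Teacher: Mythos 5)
Your proof is correct and follows essentially the same route as the paper's: write the difference as $\int(f-g)\,dF$, observe that each finite average $|z|^{-1}\sum_{x\in z}(f(x)-g(x))$ is already infinitesimal in $K$, and conclude that the equivalence class is infinitesimal because the witnessing inequalities hold on all of $[X]^{<\omega}$, hence on a set in $F$. The only difference is that you spell out the closure of the infinitesimals under addition and multiplication by finite elements, which the paper leaves implicit.
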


\begin{proof}
Let $\e(x) = f(x) - g(x)$.  
For all $z \in [X]^{<\omega}$
and $n \in \mathbb N$,
$$-1/n< |z|^{-1}\sum_{x \in z} \varepsilon(x) <1/n.$$
Therefore, $\int (f - g) \, dF = \int f \, dF - \int g \, dF$ is infinitesimal.
\end{proof}

\subsection{Standard integrals}
\label{sec:std}

\begin{definition}\label{standard integrals}
	If $K$ is a comparison ring and $F$ is a fine filter over $[X]^{<\omega}$, then for every $f : X \to K$ we define
	\begin{itemize}
		\item the \emph{upper integral} of $f$ as $\sqint^+ f \, dF := \shsup(\int f \, dF)$;
		\item the \emph{lower integral} of $f$ as $\sqint^- f \, dF := \shinf(\int f \, dF)$;
		\item the \emph{standard integral} of $f$ as $\sqint f \, dF := \st(\int f \, dF)$, if this is well-defined.
	\end{itemize}
\end{definition}

If $F$ is an ultrafilter, then every function has a standard $F$-integral.

By Lemma \ref{lemma properties of shsup, shinf}, for all functions $f,g : X \to K$ with finite integral and for all $r \in \Q$,
\begin{itemize}
	\item $\sqint^+ (f+g) \, dF \leq \sqint^+ f \, dF + \sqint^+ g \, dF$.
	\item $\sqint^- (f+g) \, dF \geq \sqint^- f \, dF + \sqint^- g \, dU$.
	\item $\sqint (f+g) \, dF = \sqint f \, dU + \sqint g \, dF$, if both terms on the righthand side are defined.
	\item $\sqint^{\pm} rf \, dF = r \sqint^{\pm} f \, dF$ if $r \geq 0$, and $\sqint^{\pm} rf \, dF = r \sqint^{\mp} f \, dF$ if $r <0$.
\end{itemize}
Thanks to the above properties, the set $\{ f \in \fun(X,K) : f \text{ has a standard integral} \}$ is a vector space over $\Q$. If $K \supseteq \R$, then in the above assertions, $\Q$ can be replaced with $\R$.
Moreover, if $F'$ is a filter extending $F$, then for all $f : X \to K$,
$$\sqint^- f \, dF \leq \sqint^- f \, dF' \leq \sqint^+ f \, dF' \leq \sqint^+ f \, dF.$$
This holds because for all rational numbers $q_0,q_1$, the relation ``$q_0 < \int f \, dF < q_1$'' means that for some $A \in F$, $q_0 < |z|^{-1}\sum_{x \in z} f(x) < q_1$ for all $z \in A$, and this $A$ would be in $F'$ as well.
It follows that if $\sqint f \, dF$ exists, then so does $\sqint f \, dF'$, and it is the same number.

Unfortunately, the collection of functions possessing a standard $F$-integral is in general not a ring.  For example, let $F$ the filter generated by the sets $A_n = \{ z \in [\omega]^{<\omega} : z$ is an initial segment of length $\geq n \}$.  One may construct two sets $A,B \subseteq \omega$ such that $\sqint \chi_A \, dF = \sqint \chi_B \, dF = 1/2$, but the function $\chi_A \chi_B = \chi_{A \cap B}$ does not have a standard integral because the density of the intersection oscillates between nearly half and nearly zero.  On the other hand, we will see in \S\ref{reps} that for many canonical filters, the class of functions possessing a standard integral is closed under multiplication and other operations.

One may interpret the upper and lower $F$-integrals of a function $f$ as upper and lower bounds on the average value of $f$ that one expects to observe empirically.  Similarly, one may interpret the upper and lower integrals of the characteristic function of a set as a confidence interval for the event described by the set.  The gap between these values can be reduced or even closed by encoding additional information, i.e.\ by considering the integrals induced by a filter $F' \supseteq F$.  This can be done by simply adding a single set to $F$ and closing under intersections and supersets.  Thus the filters can be readily updated to accommodate new data.

\subsection{Weighted integrals}
\label{weighted}

We would like to allow the possibility for some parts of our space to contribute to the approximation of the integral without having their contribution diminished as more points are added.  This will allow for point masses and for spaces with infinite volume.  Let $X$ be a set and let $\vec P = \{ P_i : i \in I \}$ be a partition of $X$.  Let $F$ be a fine filter over $[X]^{<\omega}$, and let $G$ be a divisible Abelian group.  For a function $f : X \to G$, we define 
$$\int f \, d(F,\vec P) = \left[ z \mapsto \sum_{i \in I} \sum_{x \in z \cap P_i}  f(x)/|z \cap P_i| \right]_F$$
Since each relevant $z$ is finite, each sum above involves only finitely many terms.  As before, $\int (f+g) \, d(F,\vec P) = \int f \, d(F,\vec P) + \int g \, d(F,\vec P)$.  

For each $i \in I$, let $\pi_i : [X]^{<\omega} \to [P_i]^{<\omega}$ be the map $z \mapsto z \cap P_i$.  For each $i$, $F$ canonically projects to a fine filter $F_i$ over $[P_i]^{<\omega}$ via the criterion $A \in F_i \Leftrightarrow \pi_i^{-1}[A] \in F$.  Each $\pow(G,F_i)$ canonically embeds into $\pow(G,F)$, via the map $[f]_{F_i} \mapsto [f \circ \pi_i]_F$.
If $\vec P$ is a finite partition $\{ P_i : i \leq n \}$, then for any $f : X \to G$, 
$\int f \, d(F,\vec P) = \sum_{i=0}^n \int (f \restriction P_i) \, dF_i,$
where we compute the sum of values from different reduced powers via the canonical embeddings.

\subsection{Probabilities and a countable example}

If $F$ is a fine filter over $[X]^{<\omega}$, then we define the \emph{$F$-probability} of a set $A \subseteq X$ as $\int \chi_A \, dF$.  This is also written as $\Pr_F(A)$.
The \emph{expected value} (according to $F$) of a function $f$ on $X$ is $\int f \, dF$.  This is written as $\E_F(f)$.  We drop the subscript for the filter when it is clear from context.

We define the \emph{conditional expectation} of a function $f$ on a nonempty set $A$, $\E(f|A)$, as $(\int f \cdot \chi_A \, dF) /\Pr_F(A)$.  Since the filter $F$ is assumed to be fine, $\Pr_F(A)$ is always an invertible element of the comparison ring $\pow(\Q,F)$ when $A$ is nonempty, so the conditional expectation is always well-defined.
In contrast, it is well-known that, in the Kolmogorov model for probability, the problem of determining the conditional probability with respect to a set of null probability is not well posed \cite{Rao1988}. A typical approach that allows one to define $P(B|A)$ for sets satisfying $P(A) = 0$ consists in considering the limit $\lim_{n\to\infty} P(B|A_n)$ under the hypotheses that $\lim_{n\to\infty} A_n = A$ and $P(A_n) > 0$ for all $n\in\N$. However this limit depends on the choice of the sequence $\{A_n\}_{n\in\N}$. We discuss a more concrete example in Section \ref{sec bk}.

For a set $B \subseteq X$, we write $\Pr(B|A)$ for $\E(\chi_B | A)$, which is always a member of the comparison ring between $0$ and $1$.  $\E(f|A)$ can also be directly expressed in the reduced power as:
$$ \left[ z \mapsto \sum_{x \in z \cap A} f(x) / | z \cap A | \right]_F$$
These notions also make sense for weighted integrals.  Suppose $\vec P$ is a partition of $X$ and $A \subseteq X$ is nonempty.  Then the average or expected value $\E(f|A)$ of a function $f$ on $A$ is defined as $\int f \cdot \chi_A  \, d(F,\vec P) / \int \chi_A \, d(F,\vec P)$.  Thus it makes sense to compute conditional expectations using any nonempty condition, even those of infinitesimal or infinite measure.

As discussed in \cite{MR3046984}, this kind of notion allows for ``fair'' probability distributions on infinite sets (even countable sets), where the probability of any single point is the same nonzero value, contrary to the classical situation.  We would like to briefly discuss a similar class of examples that allows us to naturally model the notion of independent random variables using only hereditarily countable mathematical objects.  The classical treatment uses infinite products of measure spaces, which involves objects of size at least continuum \cite{MR3930614}.  

Fix a natural number $k\geq 2$.  Let $T_k$ be the complete $k$-ary tree of height $\omega$.  Our set $X$ consists of the nodes of $T_k$, i.e.\ the finite $k$-ary sequences.  For each $n<\omega$, let $T_k^n$ be the set of all $k$-ary sequences of length $\leq n$.  Let $Z \subseteq [X]^{<\omega}$ be the collection of all $T_k^n$.  Let $F$ be the smallest fine filter on $Z$, i.e.\ the one generated by the sets $\{ T_k^m : m \geq n \}$ for $n<\omega$.

For $n<\omega$ and $i<k$, let $A_n = \{ s \in T_k : \len(s) > n \}$ and let $B_{i,n} = \{ s \in A_n : s(n) = i \}$.  It is easy to see that for all $n$, $\st(\Pr(A_n)) = 1$, and $\Pr(B_{i,n} | A_n) = 1/k$.  Further, for distinct $n_1,\dots,n_{r}<\omega$ and any $i_1,\dots,i_{r}<k$, 
$$\st(\Pr(B_{i_1,n_1} \cap \dots \cap B_{i_{r},n_{r}} )) = 1/k^r.$$

If we want to model independent trials for which the probabilities can take on a wider range of values, we can consider the space $T_{\Q}$, the set of finite sequences of rational numbers between 0 and 1.  To define the appropriate filter, consider for each $n,m<\omega$, the subtree $T_{1/m}^n$ consisting of all finite sequences of length $\leq n$ such that each coordinate is of the form $k/m$, where $0\leq k \leq m$ is an integer.  Let $F$ be the smallest fine filter over the set of all $T_{1/n}^n$.

For $n<\omega$ and reals $0 \leq a \leq b \leq 1$, let 
$$B^n_{a,b} = \{ s \in T_{\Q} : \len(s) > n \wedge a \leq s(n) < b \}.$$
It is easy to see that for all distinct $n_1,\dots,n_r<\omega$ and all choices of intervals $[a_1,b_1),\dots,[a_r,b_r)$,
$$\st(\Pr(B^{n_1}_{a_1,b_1} \cap \dots \cap B^{n_r}_{a_r,b_r})) = (b_1-a_1)\cdots(b_r-a_r).$$
This is because, given any $\e>0$, if we take $m$ large enough, then the proportion of points in $\prod_{1\leq i \leq r} [0,1]_{\Q}$ with denominator $1/m$, that lie in the rectangle $\prod_{1\leq i \leq r} [a_i,b_i)$, is within $\e$ of the classical volume of this rectangle.

\section{Representations of classical integrals}
\label{reps}

In this section we show that the filter integral is general enough to represent every real-valued measure defined on an algebra $\mathcal A$ of subsets of $X$.
Using the ``hyperfinite'' approach as in \cite{MR315082}, we can obtain similar results involving ultrafilters.  However, we work here to define the filters directly from given measures.


The following lemma is a slight strengthening of one appearing in \cite{DBLP:journals/jla/BenciBN14}.

\begin{lemma}\label{combinatorial}
	Suppose $\mu$ is a finitely additive measure defined on an algebra $\mathcal A$ of subsets of an infinite set $X$, taking extended real values in $[0,\infty]$ and giving measure zero to all singletons.  Let $Y_1,\dots,Y_k \in \mathcal A$ have finite measure, let $x_1,\dots,x_\ell \in X$, and let $n \in \N$ be positive. 
	There exists a finite $z\subseteq X$
	that satisfies the following properties:
	
	\begin{enumerate}
		\item
		$x_1,\ldots,x_\ell\in z$;
		\item $n\ell< |z|$;
		\item if $\mu(Y_1 \cup\dots\cup Y_k) > 0$, then $z \setminus \{x_1,\ldots,x_\ell\} \subseteq Y_1 \cup\dots\cup Y_k$;
		\item	for $1 \leq i,j \leq k$, if $\mu(Y_i)\not= 0$, then:
		$$\left|\frac{|z\cap Y_j|}{|z\cap Y_i|} -
		\frac{\mu(Y_j)}{\mu(Y_i)}\right|\ <\ \frac{1}{n}$$
	\end{enumerate}
\end{lemma}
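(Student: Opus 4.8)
The plan is to build $z$ as a finite ``scaled sample'' whose intersection with each atom of the Boolean algebra generated by $Y_1,\dots,Y_k$ is proportional to that atom's measure, with the forced points $x_1,\dots,x_\ell$ thrown in; the proportions then transfer to the $Y_j$ themselves, while the fixed perturbation coming from the $x_i$ and from rounding washes out once the sample is large. First I would dispose of the degenerate case: if $\mu(Y_1\cup\dots\cup Y_k)=0$, then monotonicity forces every $\mu(Y_i)=0$, so (3) and (4) are vacuous and, $X$ being infinite, I simply take $z$ to consist of $x_1,\dots,x_\ell$ together with enough extra points that $|z|>n\ell$. So from now on assume $U:=Y_1\cup\dots\cup Y_k$ has positive measure.

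Next I would replace the overlapping sets $Y_j$ by the atoms of the finite Boolean algebra they generate. Let $a_1,\dots,a_M$ be the nonempty atoms contained in $U$; they lie in $\mathcal A$, are pairwise disjoint with union $U$, and each $Y_j$ is the union of the atoms it contains, so by finite additivity $\mu(Y_j)=\sum_{a_m\subseteq Y_j}\mu(a_m)$. The decisive structural fact is that any atom of positive measure is infinite, since a finite set is a finite union of measure-zero singletons and hence has measure zero. Write $\mu_{\min}$ for the least positive value among the $\mu(a_m)$ and $S:=\mu(U)$.

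Now fix a large integer $N$ (to be pinned down at the end) and construct $z$ as follows: put in all of $x_1,\dots,x_\ell$; for each positive-measure atom $a_m$, use its infinitude to enlarge $z\cap a_m$ to exactly $c_m:=\lceil N\mu(a_m)\rceil$ points (possible once $c_m$ exceeds the number of $x_i$ already in $a_m$, which holds for $N\ge \ell/\mu_{\min}$); and add no points inside zero-measure atoms or outside $U$. This secures (1) and (3) by construction. Counting intersections gives $|z\cap Y_j|=\sum_{\mu(a_m)>0,\,a_m\subseteq Y_j}c_m+e_j$ with $0\le e_j\le\ell$ coming from $x_i$ in zero-measure atoms, and since $N\mu(a_m)\le c_m<N\mu(a_m)+1$ over at most $M$ atoms, $N\mu(Y_j)\le |z\cap Y_j|\le N\mu(Y_j)+(M+\ell)$. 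Writing $|z\cap Y_j|=N\mu(Y_j)+\delta_j$ with $0\le\delta_j\le P:=M+\ell$ independent of $N$, for any $i$ with $\beta:=\mu(Y_i)>0$ one has $|z\cap Y_i|\ge N\beta>0$ (so the ratio is defined) and the key estimate
$$\left|\frac{|z\cap Y_j|}{|z\cap Y_i|}-\frac{\mu(Y_j)}{\mu(Y_i)}\right|=\left|\frac{\beta\delta_j-\mu(Y_j)\delta_i}{\beta(N\beta+\delta_i)}\right|\le\frac{2PS}{\mu_{\min}^2\,N}.$$
Choosing $N$ larger than $\max\{\ell/\mu_{\min},\,n\ell/S,\,2PSn/\mu_{\min}^2\}$ makes this bound below $1/n$ for every pair $i,j$ and forces $|z|\ge NS>n\ell$, delivering (2) and (4).

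I expect the main obstacle to be the bookkeeping in the overlap step: the atom decomposition is exactly what turns the intractable simultaneous control of the overlapping counts $|z\cap Y_j|$ into independent per-atom choices, and the essential point — which the displayed estimate makes precise — is that the rounding and the forced points $x_i$ together contribute only an $N$-independent additive error bounded by $P$, which is therefore negligible against the $N$-scaled main terms governing the ratios.
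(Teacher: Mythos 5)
Your proposal is correct and follows essentially the same route as the paper's proof: decompose $Y_1,\dots,Y_k$ into the atoms of the Boolean algebra they generate, observe that positive-measure atoms are infinite, sample each in proportion to its measure, and treat the forced points $x_1,\dots,x_\ell$ (together with rounding) as a bounded additive perturbation that vanishes in the ratio estimates as the sample grows. The only difference is cosmetic: the paper rounds via rational approximations with a common denominator to hit exact proportions, whereas you use ceilings $\lceil N\mu(a_m)\rceil$ and absorb the rounding error into the same $N$-independent bound.
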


\begin{proof}
Let $r = \mu(Y_1 \cup\dots\cup Y_k)$.  We may assume $r>0$, since otherwise the conclusion is trivial.
For $1 \leq i\leq k$, let $Y_i^0 = Y_i$ and $Y_i^1 = X \setminus Y_i$.  
Consider all Boolean combinations of the form
$Y_1^{i_1} \cap \dots \cap Y_k^{i_k}$,
where $i_j = 0$ for at least one value of $j$.  List all such combinations that have positive measure as $\{ B_i : 1 \leq i \leq N \}$.  Note that these are pairwise disjoint infinite sets, and $\sum_{i=1}^N \mu(B_i) = r$.

Let $s$ be the minimum positive value of $\mu(Y_i)$ for $1 \leq i \leq k$, and let $\e>0$ be smaller than $\min\{ 1/n, s/2, s^2/4rn \} $.
For $1\leq i<N$, let $q_i$ be a positive rational number such that 
$$\mu(B_i)/r -\e/2N^2 <q_i<\mu(B_i)/r.$$
Let $q_N = 1 - \sum_{i=1}^{N-1} q_i$, so that each $q_i$ is positive and $\sum_{i=1}^N q_i = 1$.  It follows that $0<q_N - \mu(B_N)/r < \e/2N$.
For $1 \leq j \leq k$, we have that $\mu(Y_j)= \sum_{B_i \subseteq Y_j} \mu(B_i)$.  Thus 
$| \mu(Y_j)/r - \sum_{B_i \subseteq Y_j} q_i | < \e/2$.

Now take a sufficiently large common denominator $M$ for the $q_i$ such that for $1 \leq i \leq N$, there is a natural number $p_i$ with $p_i/M = q_i$ and $\ell/p_i < \e/2$.
Then choose $z' \in [X]^{<\omega}$ such that:
\begin{enumerate}
\item $|z'| = M$.
\item $z' \subseteq \bigcup_{i=1}^N B_i$;
\item for $1 \leq i \leq N$, $|z' \cap B_i | = p_i$.
\end{enumerate}
Let $z = z' \cup \{ x_1,\dots,x_\ell\}$. For each $Y_j$, $1 \leq j \leq k$, let $\ell_j = |(z \setminus z') \cap Y_j|$.  Then:
$$\ \left| \frac{|z \cap Y_j|}{|z|} -\frac{|z' \cap Y_j|}{M} \right| = \left| \frac{|z' \cap Y_j| + \ell_j}{M+\ell_j} -\frac{|z' \cap Y_j|}{M} \right| = \frac{\ell_j(M-|z' \cap Y_j|)}{M(M+\ell_j)} \leq \frac{\ell}{M} < \frac{\e}{2}$$
Since $|z' \cap Y_j| = \sum_{B_i \subseteq Y_j} p_i$,
$$\left| \frac{|z \cap Y_j|}{|z|} - \frac{\mu(Y_j)}{r} \right| < \left| \sum_{B_i \subseteq Y_j} q_i - \mu(Y_j)/r \right| + \e/2 < \e.$$


Now suppose $1\leq i,j \leq k$ and $\mu(Y_i) > 0$.  If $\mu(Y_j) = 0$, then 
$$|z\cap Y_j|/|z\cap Y_i| \leq \ell/p_i < \e < 1/n,$$ 
so the desired conclusion holds.  If $\mu(Y_j) > 0$, then set $e_x = |z \cap Y_x| / |z|$ and $m_x = \mu(Y_x)/r$ for $x = i,j$.  We have:
\begin{align*}
\left|\frac{|z\cap Y_j|}{|z\cap Y_i|} - \frac{\mu(Y_j)}{\mu(Y_i)}\right|\ = \left| \frac{e_j}{e_i} - \frac{m_j}{m_i} \right| = \left| \frac{e_j m_i - e_i m_j}{e_i m_i} \right| \leq \left| \frac{e_j m_i - e_i m_j}{s^2/2} \right| \\
 \leq \left| \frac{ (m_j + \e)m_i - (m_i - \e) m_j}{s^2/2} \right| = \frac{\e(m_i + m_j)}{s^2/2} \leq \frac{4r\e}{s^2} < \frac{1}{n}
\end{align*}
\end{proof}

\begin{theorem}
\label{probext}
Suppose $\mu$ is a finitely additive real-valued probability measure defined on an algebra $\mathcal A$ of subsets of $X$, giving measure zero to all singletons.  Then there is a definable filter $F_\mu$ over $[X]^{<\omega}$, which is the smallest fine filter $F$ with the property that for any bounded $\mu$-measurable function $f : X \to \mathbb R$,
$$\int f \, d\mu = \sqint f \, dF.$$
\end{theorem}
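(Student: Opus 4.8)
My plan is to build $F_\mu$ explicitly as a filter generated by sets that force the finite averages $|z\cap Y|/|z|$ to approximate $\mu(Y)$, to verify properness through Lemma~\ref{combinatorial}, and then to bootstrap the integral identity from characteristic functions up to all bounded measurable functions; minimality I would handle separately at the end.

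For the definition, for each $Y \in \mathcal A$ and positive $n \in \N$ I would set
$$ E(Y,n) = \left\{ z \in [X]^{<\omega} : \left| \frac{|z \cap Y|}{|z|} - \mu(Y) \right| < \frac 1n \right\}, $$
and for each $x_0 \in X$ set $F_{x_0} = \{ z \in [X]^{<\omega} : x_0 \in z \}$, then let $F_\mu$ be the filter generated by all the $E(Y,n)$ together with all the $F_{x_0}$. This family is manifestly definable from $\mu$ and $\mathcal A$, and the presence of every $F_{x_0}$ makes $F_\mu$ fine. To see $F_\mu$ is proper I would check the finite intersection property: given finitely many generators $E(Y_1,n_1),\dots,E(Y_k,n_k),F_{x_1},\dots,F_{x_\ell}$, I would apply Lemma~\ref{combinatorial} to the sets $Y_1,\dots,Y_k,X$ (using $\mu(X)=1$), the points $x_1,\dots,x_\ell$, and $n=\max_i n_i$. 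Reading clause~(4) of that lemma with $X$ as the distinguished set of positive measure yields $\bigl||z\cap Y_j|/|z| - \mu(Y_j)\bigr| < 1/n \le 1/n_j$ for each $j$, while clause~(1) gives $x_1,\dots,x_\ell \in z$; hence the produced $z$ lies in every chosen generator.

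Next I would prove the integral identity. For $Y \in \mathcal A$ we have $\int \chi_Y \, dF_\mu = [\,z \mapsto |z\cap Y|/|z|\,]_{F_\mu}$, and for rational $q < \mu(Y)$ a suitable $E(Y,n)$ witnesses $\int \chi_Y \, dF_\mu > q$, with the dual statement for $q > \mu(Y)$; together with Lemma~\ref{lemma properties of shsup, shinf} this forces $\sqint \chi_Y \, dF_\mu = \mu(Y) = \int \chi_Y \, d\mu$. Because both the standard integral (by the linearity established in \S\ref{sec:std}, with $\R$ in place of $\Q$ since $K \supseteq \R$) and $\mu$-integration are $\R$-linear, they agree on all $\mathcal A$-simple functions. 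For a bounded measurable $f$ I would pick $\mathcal A$-simple $s_m$ with $\delta_m := \sup_x|f(x)-s_m(x)| \to 0$; since $-\delta_m \le \int(f-s_m)\,dF_\mu \le \delta_m$ by the monotonicity of the filter integral for fine filters, the sub/superadditivity of the standard parts from Lemma~\ref{lemma properties of shsup, shinf} gives
$$ \int s_m \, d\mu - \delta_m \le \sqint^- f \, dF_\mu \le \sqint^+ f \, dF_\mu \le \int s_m \, d\mu + \delta_m . $$
Letting $m \to \infty$ squeezes both standard parts to $\int f \, d\mu$, so $\sqint f \, dF_\mu = \int f \, d\mu$.

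Finally, for minimality I would take any fine filter $F$ enjoying the same identity. Fineness gives every $F_{x_0} \in F$, so it suffices to show $E(Y,n) \in F$. From $\sqint \chi_Y \, dF = \mu(Y)$, the cut $\{q \in \Q : \int\chi_Y\,dF > q\}$ is downward closed with supremum $\mu(Y)$, hence contains all rationals below $\mu(Y)$, and dually the upper cut contains all rationals above; choosing rationals $q_0 \in (\mu(Y)-1/n,\mu(Y))$ and $q_1 \in (\mu(Y),\mu(Y)+1/n)$, the sets $\{z : |z\cap Y|/|z| > q_0\}$ and $\{z : |z\cap Y|/|z| < q_1\}$ both lie in $F$ and their intersection is contained in $E(Y,n)$. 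Thus $F \supseteq F_\mu$. The genuinely delicate step, I expect, is the passage to arbitrary bounded measurable functions: existence of the filter and the characteristic-function case fall straight out of Lemma~\ref{combinatorial}, but for general $f$ one must control the upper and lower standard parts of $\int f\,dF_\mu$ simultaneously through a uniform approximation and invoke a squeeze, which also leans on the (countable-choice-dependent) fact that bounded measurable functions are uniform limits of $\mathcal A$-simple functions.
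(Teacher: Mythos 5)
Your proposal is correct and follows essentially the same route as the paper: the same generating sets $A_{Y,n}$ and $A_x$, the same appeal to Lemma \ref{combinatorial} (specialized with $X$ as the reference set of measure one) for the finite intersection property, the same minimality argument via rational cuts around $\mu(Y)$, and the same squeeze of a bounded measurable $f$ between uniformly approximating $\mathcal A$-simple functions. The only cosmetic difference is that the paper builds the approximants $g_n \leq f \leq h_n$ explicitly from the level sets $E_{a,b}$, so the uniform approximation step is constructive and needs no appeal to countable choice.
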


\begin{proof}
For $x \in X$, let $A_x = \{ z \in [X]^{<\omega} : x \in z \}$, and for a set $Y \in \mathcal A$ and $n \in \N$, let $A_{Y,n} = \{ z : \left||Y \cap z|/|z| - \mu(Y) \right|< 1/n \}$.  By Lemma \ref{combinatorial}, the collection of all $A_x$ and $A_{Y,n}$ for $x \in X$, $Y \in \mathcal A$, and $n \in \N$, has the finite intersection property.  Let $F_\mu$ be the generated filter.

Suppose $F$ is any filter with the desired property.  Then for every $Y \in \mathcal A$, $\mu(Y) = \int \chi_Y \, d\mu = \sqint \chi_Y \, dF$.  This implies that for every $n \in \N$, the set of $z \in [X]^{<\omega}$ such that $||Y \cap z|/|z| - \mu(Y)| < 1/n$, is a member of $F$.  Thus $F_\mu$ is contained in any fine filter with the desired property.

Let $f$ be a bounded $\mu$-measurable function, and let $M \in \mathbb R$ be such that $|f|< M$.  For real numbers $a<b$, the set 
$$E_{a,b} := \{ x : a < f(x) \leq b \}$$
is in $\mathcal A$.  For a positive $n \in \N$, let $g_n$ be the function that takes value $Mi/n$ on $E_{Mi/n, M(i+1)/n}$ for $-n\leq i < n$, and let $h_n$ the function that takes value $M(i+1)/n$ on $E_{Mi/n, M(i+1)/n}$.  By the linearity of the integrals, for each $n$,
\begin{align*}
&\int g_n \, dF_\mu \leq \int f \, dF_\mu \leq \int h_n \, dF_\mu;\\
&\int g_n \, d\mu = \sqint g_n \, dF_\mu = \sum_{i=-n}^{n-1} \frac{Mi}{n} \mu(E_{Mi/r, M(i+1)/r}); \\
&\int h_n \, d\mu = \sqint h_n \, dF_\mu = \sum_{i=-n}^{n-1} \frac{M(i+1)}{n} \mu(E_{Mi/r, M(i+1)/r}); \\
&\sqint h_n \, dF_\mu - \sqint g_n \, dF_\mu = M/n.
\end{align*}
It follows that $\lim_{n \to \infty} \int g_n \, d\mu = \lim_{n \to \infty} \int h_n \, d\mu = \int f \, d\mu = \sqint f \, dF_\mu$.
\end{proof}

Suppose $\mu$ is a finitely additive real-valued probability measure defined on an algebra $\mathcal A \subseteq \p(X)$.  For $Y \subseteq X$, let $\mu^+(Y) = \inf\{ \mu(A) : A \in \mathcal A$ and $Y \subseteq A \}$, and let $\mu^-(Y) = \sup \{ \mu(A) : A \in \mathcal A$ and $Y \supseteq A \}$.  Say a set $Y$ is \emph{$\mu$-measurable} if $\mu^-(Y) = \mu^+(Y)$.  It is not hard to check that the collection of $\mu$-measurable sets forms an algebra $\bar{\mathcal A}$, and if we define $\bar\mu(Y) = \mu^-(Y)=\mu^+(Y)$ for $Y \in \bar{\mathcal A}$, then $\bar\mu$ is a finitely additive measure on $\bar{\mathcal A}$.


\begin{proposition}
\label{probext_measurable}
Suppose $\mu$ is a finitely additive probability measure defined on an algebra $\mathcal A \subseteq \p(X)$ that gives measure zero to all singletons.  Then for $Y \subseteq X$, $\chi_Y$ has a standard $F_\mu$-integral if and only if $Y$ is $\mu$-measurable.
\end{proposition}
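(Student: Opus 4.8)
The plan is to prove the two exact identities
$$\shsup{\left(\int \chi_Y \, dF_\mu\right)} = \mu^+(Y), \qquad \shinf{\left(\int \chi_Y \, dF_\mu\right)} = \mu^-(Y),$$
after which the proposition is immediate: by Definition \ref{def upper and lower standard part}, $\chi_Y$ has a standard $F_\mu$-integral exactly when these two values agree, and by the definition of $\mu$-measurability this happens precisely when $\mu^+(Y)=\mu^-(Y)$. Note that $0\le\chi_Y\le 1$ forces $0\le\int\chi_Y\,dF_\mu\le 1$, so both standard parts are finite. It is enough to prove the first identity: the second follows by applying it to $X\setminus Y$, using $\chi_{X\setminus Y}=1-\chi_Y$, the identity $\shsup{(-a)}=-\shinf{a}$ from Lemma \ref{lemma properties of shsup, shinf} (together with a shift by the rational constant $1$), and the computation $\mu^+(X\setminus Y)=1-\mu^-(Y)$.

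The inequality $\shsup{(\int \chi_Y \, dF_\mu)}\le\mu^+(Y)$ is the easy half. For any $B\in\mathcal A$ with $Y\subseteq B$ we have $\chi_Y\le\chi_B$ pointwise, hence $\int\chi_Y\,dF_\mu\le\int\chi_B\,dF_\mu$ by the monotonicity of the integral noted just before Lemma \ref{finiteslice}. Applying the (monotone) upper standard part and recalling from Theorem \ref{probext} that $\chi_B$ has standard integral $\mu(B)$, we obtain $\shsup{(\int\chi_Y\,dF_\mu)}\le\mu(B)$; taking the infimum over all such $B$ yields the bound.

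The reverse inequality $\shsup{(\int \chi_Y \, dF_\mu)}\ge\mu^+(Y)$ is the crux. Unwinding Definition \ref{def upper and lower standard part}, I must show that for every rational $q<\mu^+(Y)$ the set $S_q=\{z:|z\cap Y|/|z|<q\}$ is not in $F_\mu$. As $F_\mu$ is generated by the sets $A_x$ and $A_{Y',n}$ ($Y'\in\mathcal A$), this reduces to producing, inside an arbitrary finite intersection of generators $G=A_{x_1}\cap\dots\cap A_{x_\ell}\cap A_{Y_1,n_1}\cap\dots\cap A_{Y_k,n_k}$, a finite set $z$ with $x_1,\dots,x_\ell\in z$, with $\big||z\cap Y_j|/|z|-\mu(Y_j)\big|<1/n_j$ for each $j$, and with $|z\cap Y|/|z|\ge q$. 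To do so I pass to the atoms $B_1,\dots,B_N$ of the finite subalgebra generated by $Y_1,\dots,Y_k$ and adapt the counting construction in the proof of Lemma \ref{combinatorial} (run with $X$ adjoined to the list, so that its ratio conditions become the absolute density conditions $|z\cap Y_j|/|z|\approx\mu(Y_j)$ demanded by $F_\mu$): this produces a finite $z\supseteq\{x_1,\dots,x_\ell\}$ in which each atom $B_i$ receives a number of points realizing density $\approx\mu(B_i)$, and hence the correct densities for every $Y_j$. The construction dictates only how many points land in each atom, not which ones, so in each atom with $Y\cap B_i$ infinite I take all of its allotted points from $Y\cap B_i$. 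Setting $A=\bigcup\{B_i:Y\cap B_i\text{ infinite}\}\in\mathcal A$, this makes $|z\cap Y|/|z|\approx\mu(A)$.

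Finishing the crux requires $\mu(A)\ge\mu^+(Y)$: the difference $Y\setminus A$ meets only the finitely many atoms in which $Y$ is finite, so it is itself finite, hence lies in $\mathcal A$ with measure $0$ because $\mu$ is null on singletons; therefore $A\cup(Y\setminus A)\in\mathcal A$ is a superset of $Y$ of measure $\mu(A)$, giving $\mu^+(Y)\le\mu(A)$. Consequently, for $|z|$ large enough the sample satisfies $|z\cap Y|/|z|\approx\mu(A)\ge\mu^+(Y)>q$, so $z\in G\setminus S_q$ and $S_q\notin F_\mu$. I expect the main obstacle to be exactly this construction: one must meet all the algebra-density constraints built into $G$ while steering the otherwise-free intra-atom choices so as to raise the $Y$-density all the way up to the outer measure, and then invoke the singleton-null hypothesis to guarantee that the unavoidable non-$Y$ points in the ``bad'' atoms open no gap between $\mu(A)$ and $\mu^+(Y)$. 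With both identities established, $\chi_Y$ has a standard integral if and only if $\mu^+(Y)=\mu^-(Y)$, i.e.\ if and only if $Y$ is $\mu$-measurable.
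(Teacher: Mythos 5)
Your proof is correct, but it takes a genuinely different route from the paper's on the hard direction. The easy direction (measurable $\Rightarrow$ standard integral exists) is the same squeeze between $\int \chi_A \, dF_\mu$ and $\int \chi_B \, dF_\mu$ for $A \subseteq Y \subseteq B$ in $\mathcal A$ that the paper uses. For the converse, the paper computes nothing: it invokes the extension theorem of \L o\'s and Marczewski to produce, for each $r \in [\mu^-(Y),\mu^+(Y)]$, a measure $\nu$ extending $\mu$ with $\nu(Y)=r$, so that $F_\nu \supseteq F_\mu$ and $\sqint \chi_Y \, dF_\nu = r$; since a standard $F_\mu$-integral would persist with the same value to every finer filter, a non-measurable $Y$ cannot have one. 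You instead establish the sharper quantitative identities $\shsup{(\int \chi_Y \, dF_\mu)} = \mu^+(Y)$ and $\shinf{(\int \chi_Y \, dF_\mu)} = \mu^-(Y)$ by working directly inside the generators of $F_\mu$, re-running the counting construction of Lemma~\ref{combinatorial} with $X$ adjoined (so ratio constraints become density constraints, exactly as in Theorem~\ref{probext}) and steering the otherwise-free intra-atom choices. This is legitimate: the construction there constrains only the cardinalities $|z' \cap B_i|$, so you may draw all of an atom's allotted points from $Y \cap B_i$ whenever that set is infinite; and your observation that the leftover part of $Y$ meeting only atoms where $Y$ is finite is itself a finite, hence null, member of $\mathcal A$ correctly closes the gap between $\mu(A)$ and $\mu^+(Y)$ (the reading of ``measure zero on all singletons'' as placing singletons in $\mathcal A$ is the one the paper itself relies on when it asserts in Lemma~\ref{combinatorial} that positive-measure sets are infinite). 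The trade-off: the paper's argument is shorter but depends on an external theorem plus the monotonicity of standard integrals under filter refinement, whereas yours is self-contained and proves the stronger statement identifying the upper and lower $F_\mu$-integrals with outer and inner measure --- a fact the paper's route also yields implicitly but never states.
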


\begin{proof}
If $Y \subseteq X$ is $\mu$-measurable, then for every $\e>0$, there are $A,B \in \mathcal A$ such that $A \subseteq Y \subseteq B$ and
$$\bar\mu(Y) - \e < \mu(A) = \sqint \chi_A \, dF_\mu \leq  \sqint \chi_B \, dF_\mu = \mu(B) < \bar\mu(Y)+\e$$
Since $\int \chi_A \, dF_\mu \leq  \int \chi_Y \, dF_\mu \leq \int \chi_B \, dF_\mu$, we have that $\sqint \chi_Y \, dF_\mu = \bar\mu(Y)$.

For the other direction, a result of \L o\'s and Marczewski  \cite{MR35327} shows that, if $Y \subseteq X$ and $\mu^-(Y) \leq r \leq \mu^+(Y)$, then we can define a measure $\nu$ on the algebra generated by $\mathcal A \cup \{ Y \}$ such that $\nu(Y) = r$ and $\nu \restriction \mathcal A = \mu$.  By Theorem \ref{probext}, we have that $F_\nu \supseteq F_\mu$, and $\sqint \chi_Y \, dF_\nu = \nu(Y)$.  Thus if $Y$ is not $\mu$-measurable, there are extensions of $\mu$ that give different values to $Y$.  Thus $\chi_Y$ cannot have a standard $F_\mu$-integral.
\end{proof}

\begin{theorem}
\label{ext}
Suppose $\mu$ is a countably additive, real-valued, $\sigma$-finite measure defined on a $\sigma$-algebra $\mathcal A$ of subsets of $X$.  Then there is a countable partition $\vec P$ of $X$, a fine filter $F$ over $[X]^{<\omega}$, definable from $\mu$ and $\vec P$, and a ``weight function'' $w : X \to \mathbb R$, constant on each $P_i$, such that $F$ is the smallest fine filter $G$ with the property that for any $\mu$-integrable function $f : X \to \mathbb R$,
$$\int f \, d\mu = \sqint fw \, d(G,\vec P).$$
Furthermore, if $\mu(X)<\infty$ then we can take $\vec P = \la P_i : i <\alpha \leq \omega \ra$ such that $P_0$ contains no point masses, and $P_i$ is a singleton for $0<i<\alpha$.
\end{theorem}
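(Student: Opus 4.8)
The plan is to reduce to the probability-measure case of Theorem~\ref{probext} by splitting $X$ according to the atomic structure of $\mu$ together with a $\sigma$-finite exhaustion, and then to absorb the resulting normalizations into the weight function $w$ and the partition $\vec P$. Using $\sigma$-finiteness (and countable choice) I would first isolate the at most countably many point masses $\{a_j : j \in J\}$, with $c_j = \mu(\{a_j\}) > 0$, and write the non-atomic remainder $X_0 = X \setminus \{a_j : j \in J\}$ as a disjoint union $X_0 = \bigsqcup_n Y_n$ with $0 < \mu(Y_n) < \infty$. I take $\vec P$ to consist of the singletons $\{a_j\}$ together with the blocks $Y_n$, set $w \equiv c_j$ on $\{a_j\}$ and $w \equiv \mu(Y_n)$ on $Y_n$. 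With this choice the weighted integrand contributes exactly $c_j f(a_j)$ at each included atom and $\mu(Y_n)$ times the average of $f$ over $z \cap Y_n$ on each block, so the target identity becomes the statement that the within-block averages converge to the $\mu$-conditional averages $\frac{1}{\mu(Y_n)}\int_{Y_n} f\,d\mu$. The ``furthermore'' clause is the special case $\mu(X)<\infty$: then $X_0$ has finite measure and needs no subdivision, so I take $P_0 = X_0$ as the unique non-atomic block and let the remaining $P_i$ enumerate the singletons $\{a_j\}$, giving a partition indexed by some $\alpha \le \omega$.

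For the filter I would mimic the two-sided characterization of $F_\mu$ in Theorem~\ref{probext}. Let $F$ be the fine filter generated by the fineness sets $A_x = \{z : x \in z\}$ together with, for each $\mu$-integrable $f$ and each rational $\e>0$, the set $B_{f,\e}$ of all $z$ whose weighted average of $f$ differs from $\int f\,d\mu$ by less than $\e$. Every fine filter $G$ with the stated representation property must contain each $B_{f,\e}$: if $\sqint fw\,d(G,\vec P) = \int f\,d\mu$, then both the upper and lower standard parts of $\int fw\,d(G,\vec P)$ equal $\int f\,d\mu$, which forces the sets where the weighted average lies below $q_1$ and above $q_0$ into $G$ for rationals $q_0 < \int f\,d\mu < q_1$, hence $B_{f,\e}\in G$. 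Thus $F \subseteq G$ for every good $G$, so $F$ is the smallest fine filter with the property provided that (i) the generators have the finite intersection property, so that $F$ is a genuine fine filter, and (ii) $F$ itself is good. Claim (ii) is immediate: since $B_{f,\e}\in F$, both $\shsup(\int fw\,d(F,\vec P))$ and $\shinf(\int fw\,d(F,\vec P))$ are squeezed to $\int f\,d\mu$ as $\e\to 0$, so the standard integral exists and equals $\int f\,d\mu$.

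The substance is claim (i), and this is where I expect the main obstacle. Given finitely many $\mu$-integrable functions $f_1,\dots,f_r$, tolerances $\e_i$, and points $x_1,\dots,x_\ell$ to be included, I must build one finite $z \ni x_1,\dots,x_\ell$ whose weighted average of each $f_i$ is within $\e_i$ of $\int f_i\,d\mu$. First I use integrability to choose $N$ so large that the tails $\sum_{n>N}\int_{Y_n}|f_i|\,d\mu$ and $\sum_{j>N} c_j|f_i(a_j)|$ are all small; I then include the atoms $a_j$ for $j\le N$, whose contribution $c_j f_i(a_j)$ is exact, and arrange that $z$ meets only the finitely many blocks among $Y_1,\dots,Y_N$ and those containing some $x_\kappa$. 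On each such block I apply Lemma~\ref{combinatorial} to $\mu\restriction Y_n$ and the level sets of the $f_i$ to produce a large sample $z_n\subseteq Y_n$ whose empirical proportions match $\mu$, so that $\frac{1}{|z_n|}\sum_{x\in z_n} f_i(x)$ approximates the $\mu$-conditional average of $f_i$ on $Y_n$; taking $z = \bigcup_n z_n$ together with the chosen atoms then yields a weighted average within the prescribed tolerances, the residual errors being the block tail, the atom tail, and the within-block approximation errors.

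The genuinely delicate point, absent from Theorem~\ref{probext}, is that the $f_i$ need only be integrable, not bounded, so a small unrepresentative sample drawn from a region where some $f_i$ is enormous could wreck the average. I would control this inside each block by truncating: choose a threshold $M$ (depending on the fixed $f_i$ and $\e_i$) with $\int_{\{|f_i|>M\}}|f_i|\,d\mu$ tiny for all $i$, restrict the Lemma~\ref{combinatorial} sample to $W_n^M = \{x\in Y_n : |f_i(x)|\le M \text{ for all } i\}$ so that no extreme value enters $z_n$, and absorb the difference between the conditional averages over $W_n^M$ and over $Y_n$ into the tolerance using that $\mu(Y_n\setminus W_n^M)$ and $\int_{Y_n\setminus W_n^M}|f_i|\,d\mu$ are small. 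Any forced point $x_\kappa$ with $|f_i(x_\kappa)|>M$ is handled by taking $|z_n|$ large, so that its weight $O(1/|z_n|)$ in the average is negligible. This truncation step, together with the dominated-convergence facts used to make the tails small, is where the argument leans on countable and dependent choice, consistent with the stated reliance on CC and DC in Theorem~\ref{ext}.
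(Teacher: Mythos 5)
Your proposal is correct and follows essentially the same route as the paper: the same decomposition into singleton atoms plus a countable exhaustion of the non-atomic part by finite-measure blocks, the same weight function, the same generating sets for $F$ with the same minimality argument, and the same three-part error control (tail truncation in the block index, truncation of the functions at a level $M$ via monotone convergence, and Lemma~\ref{combinatorial} applied blockwise to the truncated level sets). The only cosmetic differences are that the paper reduces to nonnegative integrands first and arranges the forced points to lie inside the truncation set $E_M$ rather than diluting them by taking the sample large.
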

\begin{proof}

First note that by $\sigma$-finiteness, there can be only countably many point masses.
Let $X_0$ be the set of point masses, let $\{ P^0_i : i <\alpha \}$ partition $X_0$ into singletons, where $\alpha\leq\omega$, and let $w(x) = \mu(\{ x \})$ for $x \in X_0$.  By countable additivity, $\mu(Y) = \sum_{x \in Y} w(x)$ for all $Y \subseteq X_0$.
Let $X_1 = X \setminus X_0$.  If $\mu(X_1) = \infty$, let $\{ P^1_i : i \in \mathbb N \}$ be a partition of $X_1$ into sets of finite measure.   If $\mu(X_1) <\infty$, let $P^1_0= X_1$.  For $x \in P^1_i$, let $w(x) = \mu(P^1_i)$.

For $x \in X$, let $A_x = \{ z \in [X]^{<\omega} : x \in z \}$, and for an integrable function $f$ and $\e>0$, and let 
$$A_{f,\e} = \left\{ z : \left| \int f \, d\mu - \sum_{i,j} \sum_{x \in z \in P^i_j} \frac{f(x)w(x)}{|z \cap P^i_j |} \right| < \e \right\}$$
Let $F$ be generated by closing this collection of sets under intersection and supersets.  Clearly any filter satisfying the desired equations must contain all of these sets.  We must check that $F$ is a filter.  It will suffice to consider only nonnegative integrable functions $f$, since by breaking $f$ into the sum of its positive and negative parts and taking $\e$ small enough, we see that the same collection $F$ is generated.

Let $x_0,\dots,x_{m-1} \in X$, and let $f_0,\dots,f_{n-1}$ be $\mu$-integrable nonnegative functions.  Let $\varepsilon >0$ be given.  Using the countable additivity of $\mu$, we can find a large enough $N \in \mathbb N$ such that, if $A_k = \bigcup_{i<N} P^k_i$, then $x_0,\dots,x_{m-1} \in A_0 \cup A_1$, and for $i<n$ and $k<2$, 
$$\int_{X_k} f_i \, d\mu - \int_{A_k} f_i \, d\mu < \varepsilon/4.$$

For $r \in \mathbb R$, let $E_r = \{ x \in X : (\forall i < n) f_i(x) < r \}$.  Again using the countable additivity of $\mu$ (more specifically, the Monotone Convergence Theorem), we can find a large enough $M \in \mathbb R$ such that $x_0,\dots,x_{m-1} \in E_M \cap (A_0 \cup A_1)$, and for $i<n$ and $j<N$,
$$\left| \frac{\mu(P^1_j)}{\mu(P^1_j \cap E_M)} \int_{P^1_j \cap E_M} f_i \, d\mu - \int_{P^1_j} f_i \, d\mu \right| < \frac{\varepsilon}{4N}$$

For $i<n$ and $0 \leq a<b \leq M$, consider the set 
$E^{a,b}_i := \{ x : a < f_i(x) \leq b \}$.
By partitioning $[0,M]$ into small enough subintervals, we can apply Lemma \ref{combinatorial}
 to expand $X_1 \cap \{ x_0, \dots, x_m \}$ to a finite set $z' \subseteq A_1 \cap E_M$, so that for each $i < n$ and $j<N$,
$$\left| \frac{\int_{P^1_j \cap E_M} f_i \, d\mu}{\mu(P^1_j \cap E_M)} - \sum_{x \in z' \cap P^1_j} \frac{f_i(x)}{|z' \cap P^1_j|} \right| < \frac{\varepsilon}{4N\mu(P^1_j)}$$
Multiplying by $\mu(P^1_j)$ and combining with the previous inequality, we get that
$$\left| \int_{P^1_j} f_i \, d\mu - \sum_{x \in z' \cap P^1_j} \frac{f_i(x)w(x)}{|z' \cap P^1_j|} \right| < \frac{\varepsilon}{2N}$$
Let $z = A_0 \cup z'$.  Note that for each $i<n$,
$$\int f_i \, d\mu - \sum_{\substack{j<N \\ k < 2}} \sum_{x \in z \cap P^k_j} f_i(x)w(x)/|z \cap P^k_j | = \int_{X_0} f_i , d\mu - \sum_{x \in z \cap X_0} f_i(x)w(x)$$
$$+ \sum_{j<N} \left( \int_{P^1_j} f_i \, d\mu - \sum_{x \in z \cap P^1_j} f_i(x)w(x)/|z \cap P^1_j | \right) + \left( \int_{X_1} f_i \, d\mu -\int_{A_1} f_i \, d\mu \right)$$
The absolute value of this number is bounded by
$\varepsilon/4 + N(\varepsilon/2N) + \varepsilon/4 = \varepsilon.$
\end{proof}

Recall that a measure is \emph{complete} when all subsets of measure zero sets are measurable.  Every measure has a minimal extension to a complete measure with the same additivity.  Suppose $\mu$ is a probability measure on $X$.  For a bounded function $f : X \to \R$, let 
$$\int^- f \, d\mu = \sup\left\{ \int g \, d\mu : g \leq f \text{ and } g \text{ is measurable}\right\}$$
$$\int^+ f \, d\mu = \inf\left\{ \int g \, d\mu : g \geq f \text{ and } g \text{ is measurable}\right\}$$
When $\mu$ is countably additive, the Monotone Convergence Theorem implies that there are measurable functions $f_\ell,f_u$ such that $f_\ell \leq f \leq f_u$, and $\int f_\ell \, d\mu = \int^- f \, d\mu$, and $\int f_u \, d\mu = \int^+ f \, d\mu$.
The following is well-known:
\begin{fact}
Suppose $\mu$ is a countably additive complete probability measure on $X$, and $f : X \to \R$ is bounded.  The following are equivalent:
\begin{enumerate}
\item $\int^- f \, d\mu = \int^+ f \, d\mu$.
\item $\mu(\{ x : f_\ell(x) < f_u(x) \}) = 0$.
\item $f$ is $\mu$-measurable.
\end{enumerate}
\end{fact}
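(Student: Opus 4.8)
The plan is to route all three implications through the single auxiliary function $h := f_u - f_\ell$, which is nonnegative (since $f_\ell \leq f \leq f_u$ forces $f_\ell \leq f_u$) and measurable. The key opening observation is that, by linearity of the integral, $\int h \, d\mu = \int f_u \, d\mu - \int f_\ell \, d\mu = \int^+ f \, d\mu - \int^- f \, d\mu$, so condition (1) is exactly the assertion $\int h \, d\mu = 0$. Since $h \geq 0$ is measurable and $\mu$ is countably additive, I would invoke the standard fact that a nonnegative measurable function has vanishing integral if and only if it is zero almost everywhere (proved by decomposing $\{h>0\} = \bigcup_n \{h > 1/n\}$ and using countable additivity). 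This yields $\int h \, d\mu = 0$ iff $\mu(\{h>0\}) = 0$, and because $\{h > 0\} = \{x : f_\ell(x) < f_u(x)\}$, this establishes the equivalence of (1) and (2).

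For (2) $\Rightarrow$ (3), I would exploit the sandwiching $f_\ell \leq f \leq f_u$ once more: wherever $f_\ell(x) = f_u(x)$ we necessarily have $f(x) = f_\ell(x)$, so $\{x : f(x) \neq f_\ell(x)\} \subseteq \{x : f_\ell(x) < f_u(x)\}$. Under (2) the right-hand set is null, so $f$ agrees with the measurable function $f_\ell$ outside a set of measure zero. This is precisely where completeness of $\mu$ is used: since every subset of a null set is measurable, a function agreeing with a measurable function off a null set is itself measurable, giving (3).

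For (3) $\Rightarrow$ (1), I would simply note that a measurable $f$ is an admissible competitor $g$ in both the supremum defining $\int^- f \, d\mu$ and the infimum defining $\int^+ f \, d\mu$. Choosing $g = f$ in each gives $\int^- f \, d\mu \geq \int f \, d\mu$ and $\int^+ f \, d\mu \leq \int f \, d\mu$, while the trivial inequality $\int^- f \, d\mu \leq \int^+ f \, d\mu$ (every lower competitor is pointwise below every upper competitor, so their integrals are ordered) forces all three quantities to coincide. This closes the cycle $(1)\Rightarrow(2)\Rightarrow(3)\Rightarrow(1)$.

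The only genuinely delicate ingredient is the appeal to completeness in the step (2) $\Rightarrow$ (3); without it, coincidence with a measurable function off a null set would not license the conclusion that $f$ is measurable, and this is the one hypothesis the statement cannot dispense with. Everything else reduces to elementary properties of the integral of nonnegative functions, which are available because $\mu$ is countably additive.
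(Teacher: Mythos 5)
Your argument is correct. The paper states this Fact without proof, labelling it as well-known, so there is no in-text argument to compare against; your proof is the standard one and fills that gap cleanly. All three steps check out: the reduction of (1) to $\int (f_u - f_\ell)\, d\mu = 0$ and the use of countable additivity via $\{f_u > f_\ell\} = \bigcup_n \{f_u - f_\ell > 1/n\}$ for the equivalence with (2); the correct isolation of completeness as the hypothesis needed in (2) $\Rightarrow$ (3) so that agreement with $f_\ell$ off a null set yields measurability of $f$; and the observation that a measurable $f$ is its own competitor in both the supremum and the infimum for (3) $\Rightarrow$ (1).
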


\begin{proposition}
\label{ext_measurable}
Suppose $\mu$ is a countably additive complete probability measure defined on a $\sigma$-algebra $\mathcal A \subseteq \p(X)$.  Let $f : X \to \R$ be bounded.  The following are equivalent:
\begin{enumerate}
\item $f$ is $\mu$-measurable.
\item $f$ has a standard $(F,\vec P)$-integral, where $\vec P$ is a partition according to Theorem \ref{ext} and $F$ is the canonical filter.
\end{enumerate}
\end{proposition}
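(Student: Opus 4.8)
The plan is to prove the two sharper identities
\[
\sqint^- fw \, d(F,\vec P) = \int^- f \, d\mu , \qquad \sqint^+ fw \, d(F,\vec P) = \int^+ f \, d\mu ,
\]
where, extending Definition \ref{standard integrals} to the weighted setting, $\sqint^{\pm} fw\,d(F,\vec P) := \st^{\pm}(\int fw\,d(F,\vec P))$. These identities settle the proposition at once: by the Fact above, $f$ is $\mu$-measurable iff $\int^- f\,d\mu = \int^+ f\,d\mu$, and by the identities this holds iff $\sqint^- fw\,d(F,\vec P) = \sqint^+ fw\,d(F,\vec P)$, i.e.\ iff $f$ has a standard $(F,\vec P)$-integral. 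Throughout I fix the measurable envelopes $f_\ell \le f \le f_u$ from the Fact, with $\int f_\ell\,d\mu = \int^- f\,d\mu$ and $\int f_u\,d\mu = \int^+ f\,d\mu$; as $f$ is bounded I may truncate so that $f_\ell,f_u$ are bounded, hence $\mu$-integrable.

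First I would establish the ``easy'' half, $\int^- f\,d\mu \le \sqint^- fw\,d(F,\vec P)$ and $\sqint^+ fw\,d(F,\vec P) \le \int^+ f\,d\mu$. The weighted integral is monotone ($h_1 \le h_2$ pointwise implies $\int h_1\,d(F,\vec P) \le \int h_2\,d(F,\vec P)$), by the same argument the text gives for the unweighted integral: fineness of $F$ promotes a strict inequality at one point to a strict inequality in $\pow(K,F)$. Applying this to $f_\ell \le f \le f_u$, and then using that $a \le b$ implies $\st^-(a)\le\st^-(b)$ and $\st^+(a)\le\st^+(b)$ (immediate from the definitions, since $b<q$ forces $a<q$), together with $\st^- \le \st^+$ from Lemma \ref{lemma properties of shsup, shinf}, gives
\[
\sqint^- f_\ell w\,d(F,\vec P) \le \sqint^- fw\,d(F,\vec P) \le \sqint^+ fw\,d(F,\vec P) \le \sqint^+ f_u w\,d(F,\vec P) .
\]
Since $f_\ell,f_u$ are bounded and $\mu$-measurable, Theorem \ref{ext} gives them standard integrals equal to their $\mu$-integrals, so the outer terms are exactly $\int^- f\,d\mu$ and $\int^+ f\,d\mu$.

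The substance of the proof is the reverse inequalities, which I would obtain from measure extensions, in the spirit of Proposition \ref{probext_measurable}. The key lemma is that there is a countably additive complete probability measure $\nu_u$ on a $\sigma$-algebra $\mathcal B \supseteq \mathcal A$ with $\nu_u\restriction\mathcal A = \mu$, having the same point masses as $\mu$, under which $f$ is measurable and $f = f_u$ almost everywhere (symmetrically, a measure $\nu_\ell$ with $f = f_\ell$ a.e.). Granting this: because $\nu_u$ has the same point masses as $\mu$, Theorem \ref{ext} produces for $\nu_u$ the same partition $\vec P$ and weight $w$, and for every $\mu$-integrable $g$ the generating set $A_{g,\e}$ is literally the same for $\mu$ and for $\nu_u$ (its defining average uses only $\int g\,d\nu_u = \int g\,d\mu$, $\vec P$, and $w$); hence $F_{\nu_u} \supseteq F$. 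The filter-extension monotonicity of \S\ref{sec:std} holds verbatim for weighted integrals, so
\[
\sqint^+ fw\,d(F,\vec P) \ge \sqint^+ fw\,d(F_{\nu_u},\vec P) = \int f\,d\nu_u = \int f_u\,d\mu = \int^+ f\,d\mu ,
\]
the middle equality being Theorem \ref{ext} applied to the $\nu_u$-integrable function $f$, and the next using $f = f_u$ $\nu_u$-a.e.\ with $\nu_u\restriction\mathcal A = \mu$. The bound $\sqint^- fw\,d(F,\vec P) \le \int^- f\,d\mu$ follows symmetrically from $\nu_\ell$.

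The main obstacle is the key lemma, namely the construction of $\nu_u$ (and $\nu_\ell$). Since every subset of the countable set $X_0$ of point masses is measurable, $f\restriction X_0$ is automatically measurable, so the non-measurability of $f$ lives on the non-atomic part $X_1$; this is what lets me arrange $\nu_u$ to create no new point masses. To build $\nu_u$ I would define a $\sigma$-homomorphism $h$ from $\mathcal B := \sigma(\mathcal A \cup \sigma(f))$ into the measure algebra of $(X,\mathcal A,\mu)$ by $h(A)=A$ for $A\in\mathcal A$ and $h(\{f\le q\}) = \{f_u \le q\}$, and set $\nu_u = \mu\circ h$. The delicate point, and the crux of the whole argument, is that $h$ is well defined modulo $\mu$-null sets; this reduces to checking that each set $\{f \le q < f_u\}$ contains no $\mathcal A$-measurable set of positive measure. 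That holds because otherwise truncating $f_u$ at $q$ on such a set would yield a measurable majorant of $f$ lying strictly below $f_u$ on a set of positive measure, contradicting the minimality of the envelope (i.e.\ $\int f_u\,d\mu = \int^+ f\,d\mu$). This is a countably additive, function-valued refinement of the \L o\'s--Marczewski extension invoked in Proposition \ref{probext_measurable}.
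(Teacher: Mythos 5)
Your overall strategy --- proving $\sqint^{\pm} fw\,d(F,\vec P)=\int^{\pm}f\,d\mu$ and invoking the Fact --- is sound, and your ``easy half'' (monotonicity plus Theorem \ref{ext} applied to the envelopes $f_\ell,f_u$) is fine. The gap is that your key lemma is false: there need not exist \emph{any} countably additive probability measure $\nu_u$ extending $\mu$ under which $f$ is measurable and $f=f_u$ a.e., nor even one with $\int f\,d\nu_u=\int f_u\,d\mu$. Take $X=[0,1]$ with Lebesgue measure, partition $X$ into countably many sets $A_n$ each of full outer measure, and let $f=\sum_n(1-1/n)\chi_{A_n}$. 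Since $\mu^*(A_n)=1$, every measurable $g\geq f$ satisfies $g\geq 1-1/n$ a.e.\ for all $n$, so $f_u\equiv 1$; but $f<1$ everywhere, so $\{f\neq f_u\}=X$ is conull under every extension, and any countably additive extension making $f$ measurable gives $\int f\,d\nu=\sum_n(1-1/n)\nu(A_n)<1=\int f_u\,d\mu$. Your well-definedness criterion for the $\sigma$-homomorphism $h$ is the precise point of failure: each $\{f\leq q<f_u\}$ does have inner measure zero in this example (its complement contains some $A_m$ of full outer measure), so your check passes, yet $h$ cannot preserve countable joins, since $h(\{f\leq 1-1/n\})=[\{f_u\leq 1-1/n\}]=[\emptyset]$ for every $n$ while $\bigcup_n\{f\leq 1-1/n\}=X$ must map to $1$. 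Inner measure zero is not preserved by countable unions, so the consistency of $h$ on single generators does not extend to the generated $\sigma$-algebra.

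The paper avoids this by never extending the measure: it extends the \emph{filter} directly. What is actually true (and what the paper proves) is the outer-measure statement $\mu^*(\{x\in A: f_u(x)-f(x)<\varepsilon\})=\mu(A)$ for every $A\in\mathcal A$ of positive measure and every $\varepsilon>0$ --- note this holds in the counterexample above even though no extension measure exists. This suffices, via the sampling argument of Lemma \ref{combinatorial} and Theorem \ref{ext}, to verify the finite intersection property of $F$ together with the sets $\{z: |\,|z|^{-1}\sum_{x\in z}f(x)-\int f_u\,d\mu|<\varepsilon\}$, yielding a filter $F_u\supseteq F$ with $\sqint f\,dF_u=\int^+f\,d\mu$ (and symmetrically $F_\ell$), after which your filter-extension monotonicity finishes exactly as you intend. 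So the fix is to replace the measure $\nu_u$ by the filter $F_u$ built from this outer-measure fact; the rest of your argument then goes through.
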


\begin{proof}
The direction (1) $\Rightarrow$ (2) follows from Theorem \ref{ext}.  
For the other direction, assume for simplicity that $\mu$ has no point masses, so that we can ignore $\vec P$.
Let $\mu^*$ be the outer measure on $\p(X)$ induced by $\mu$.  Suppose $f : X \to \R$ is a bounded function that is not measurable.  By countable additivity, there is some $\e>0$ such that $\mu(\{ x : f_u(x) \geq f_\ell(x) + \e \})>0$.  Thus $\int f_\ell \, d\mu <\int f_u \, d\mu$.
  Now we claim that for all $\e > 0$,
$$\mu^*(\{ x : f(x) - f_\ell(x) < \e \}) = \mu^*(\{ x : f_u(x) - f(x) < \e \})  = 1.$$
Towards a contradiction, suppose that for some $\e,\delta>0$, 
$$\mu^*(\{ x : f(x) - f_\ell(x) < \e \}) = 1-\delta.$$  Let $E \in \mathcal A$ be such that $E \supseteq \{ x : f(x) - f_\ell(x) < \e \}$ and $\mu(E) = 1-\delta$.  Define:
$$g(x) = 
\begin{cases}
f_\ell(x) 		& \text{if } x \in E \\
f_\ell(x) + \e 	& \text{if } x \in X \setminus E
\end{cases}$$
Then $g$ is measurable, $g \leq f$, and $\int g \, d\mu -\int f_\ell \, d\mu = \e\delta > 0$.  Thus $\int g \, d\mu > \int^- f \, d\mu$, a contradiction.
We can show similarly that $\mu^*(\{ x : f_u(x) - f(x) < \e \})  = 1$.

It follows that for all $A \in \mathcal A$ of positive measure and all $\e>0$,
$$ \mu(A) = \mu^*(\{ x \in A : f(x) - f_\ell(x) < \e \} ) = \mu^*( \{ x \in A : f_u(x) - f(x) < \e \} ).$$ 
In particular, each set above is infinite.  Now, recalling the proofs of Lemma \ref{combinatorial} and Theorem \ref{ext}, we can use this to show that the following collection generates a filter $F_\ell$ over $[X]^{<\omega}$:
\begin{itemize}
\item $A_x$ for $x \in X$;
\item $A_{h,\e}$ for $\mu$-integrable $h : X \to \R$ and $\e>0$;
\item $\{ z : | \sum_{x \in z} f(x)/|z| - \int f_\ell \, d\mu | < \e \}$ for $\e>0$.
\end{itemize}
We have that $\sqint f \, dF_\ell = \int^- f \, d\mu$.  There is an analogous filter $F_u$ such that $\sqint f \, dF_u = \int^+ f \, d\mu$. If $F$ is the minimal filter given by Theorem \ref{ext}, then $F_\ell,F_u \supseteq F$.  Thus the function $f$ does not have a standard $F$-integral.
\end{proof}

Very similar conclusions can be drawn about functions that are bounded above and below by integrable functions.



\section{Non-Archimedean measures and geometry}
\label{sec namg}

\subsection{A geometric measure on $\R^{<\omega}$}
\label{geomsec}

A well-known no-go result in 
functional analysis states that there is no analogue of Lebesgue measure on infinite-dimensional separable Banach spaces such that:
\begin{itemize}
	\item every Borel set is measurable;
	\item the measure is translation-invariant;
	\item every point has a neighborhood with finite measure.
\end{itemize}
In the study of measures over infinite-dimensional spaces it is therefore usual to renounce $\sigma$-finiteness \cite{10.2307/2048779}.  This result is based on the following more general fact:  If $X$ is an infinite-dimensional normed vector space over the reals, then every open ball contains an infinite collection of pairwise-disjoint open balls of equal radius (in fact only 1/4 the radius of the original ball).  
Thus there cannot exist even a finitely additive translation-invariant measure on an infinite-dimensional normed real vector space that gives every open ball of finite radius a positive real measure.

We give a construction here of a non-Archimedean measure on a rather concrete space that contrasts with these impossibility results.  It will be translation-invariant (in a reasonable sense) on a wide class of sets that includes open balls, and it will have several other natural geometrical properties.  

Let us consider the space $\R^{<\omega}$ of $\omega$-sequences of real numbers that are eventually zero.  Each $\R^n$ appears canonically as the collection of sequences $\vec x$ such that $\vec x(m) = 0$ for all $m \geq n$.  Of course, this real vector space comes along with the standard Euclidean norm.


For a detailed discussion of the following facts of classical analysis, see Chapters 11 and 12 of \cite{Zorich}.
For a set $S \subseteq \R^n$, we say that $S$ is a \emph{parameterized ($k$-dimensional) smooth surface} if there are bounded open sets $U \subseteq V \subseteq \R^k$ such that the closure $\bar U$ of $U$ is contained in $V$, and there is an injective function $\varphi : V \to \R^n$ such that $S = \varphi[U]$ and $\varphi,\varphi^{-1}$ are both continuously differentiable (i.e.\ $C^1$).  The purpose of the set $V$ is simply to guarantee that $\varphi$ has a continuous derivative defined on a compact set.  If $S$ is such a surface, witnessed by $\varphi : U \to S$, then the classical volume of $S$ is given by
$$\vol_k(S) = \int_{U} \sqrt{\det(G_\varphi(\vec x))} \, d\vec x,$$
where $G_\varphi$ is the \emph{Gram matrix} of all inner products of partial derivatives of $\varphi$.  A key result is that this number does not depend on the way a surface is parameterized.  
\begin{fact}
\label{surfvol}
Suppose $\varphi_0 : U_0 \to S$ is a parameterization of a smooth surface $S$, and $\varphi_1: U_1 \to S$ is another parameterization.  Then
 $$ \int_{U_0} \sqrt{\det(G_{\varphi_0}(\vec x))} \, d\vec x =  \int_{U_1} \sqrt{\det(G_{\varphi_1}(\vec x))} \, d\vec x.$$
\end{fact}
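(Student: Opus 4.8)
The plan is to reduce the statement to the change-of-variables formula for multiple Riemann integrals, using the transition map between the two parameter domains. First I would form the \emph{transition map} $\psi = \varphi_1^{-1} \circ \varphi_0 : U_0 \to U_1$. Since $\varphi_0[U_0] = S = \varphi_1[U_1]$, this composition is well-defined and bijective, with inverse $\varphi_0^{-1} \circ \varphi_1$; because each $\varphi_i$ and each $\varphi_i^{-1}$ is $C^1$, both $\psi$ and $\psi^{-1}$ are $C^1$, so $\psi$ is a $C^1$ diffeomorphism of $U_0$ onto $U_1$.

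Next I would relate the two Gram matrices via the chain rule. Writing $\varphi_0 = \varphi_1 \circ \psi$ and letting $D\varphi_i$ denote the $n \times k$ Jacobian matrix whose columns are the partial derivatives of $\varphi_i$, differentiation gives $D\varphi_0(\vec x) = D\varphi_1(\psi(\vec x)) \, D\psi(\vec x)$. Since the Gram matrix is exactly the matrix of inner products of these columns, i.e.\ $G_\varphi = (D\varphi)^T (D\varphi)$, this yields
$$G_{\varphi_0}(\vec x) = (D\psi(\vec x))^T \, G_{\varphi_1}(\psi(\vec x)) \, D\psi(\vec x).$$
Taking determinants, and using that $\det D\psi(\vec x) \neq 0$ at every point (differentiating $\psi^{-1}\circ\psi = \id$ gives $D\psi^{-1}(\psi(\vec x))\, D\psi(\vec x) = I_k$, forcing invertibility), we obtain
$$\sqrt{\det G_{\varphi_0}(\vec x)} = |\det D\psi(\vec x)| \, \sqrt{\det G_{\varphi_1}(\psi(\vec x))}.$$

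Finally I would apply the change-of-variables theorem to the diffeomorphism $\psi : U_0 \to U_1$ and the integrand $h = \sqrt{\det G_{\varphi_1}}$, which is continuous (hence Riemann integrable on the bounded set $U_1$) because the entries of $G_{\varphi_1}$ are continuous and $\det G_{\varphi_1} \geq 0$. This gives
$$\int_{U_0} h(\psi(\vec x)) \, |\det D\psi(\vec x)| \, d\vec x = \int_{U_1} h(\vec y) \, d\vec y,$$
and substituting the displayed expression for $\sqrt{\det G_{\varphi_0}}$ into the left-hand side produces exactly the desired equality.

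The main obstacle is making rigorous the claim that $\psi$ is a genuine $C^1$ diffeomorphism with everywhere-invertible derivative, a point that is delicate because each $\varphi_i^{-1}$ is the inverse of a map whose image is a $k$-dimensional subset of $\R^n$ with $k$ possibly smaller than $n$. The cleanest route is to observe that differentiating $\varphi_i^{-1}\circ\varphi_i = \id$ forces $D\varphi_i(\vec x)$ to have full rank $k$ at every point, so each $\varphi_i$ is an immersion; granting the hypothesis that $\varphi_i^{-1}$ is $C^1$, the composition $\psi$ is then $C^1$, and the invertibility of $D\psi$ follows as above. Once this is secured, the remaining steps are routine applications of the chain rule and the change-of-variables theorem, both classical and available in \cite{Zorich}.
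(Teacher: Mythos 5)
Your proof is correct and is precisely the classical argument: the paper states this as a \emph{Fact} without proof, deferring to Chapters 11--12 of Zorich, where exactly this chain of reasoning (transition map $\psi=\varphi_1^{-1}\circ\varphi_0$, the identity $G_{\varphi_0}=(D\psi)^TG_{\varphi_1}(\psi)D\psi$, and the change-of-variables theorem) is carried out. You also correctly flag the one genuinely delicate point, namely the sense in which $\varphi_i^{-1}$ is $C^1$ on the $k$-dimensional image $S\subseteq\R^n$ and why this forces $D\psi$ to be everywhere invertible; nothing further is needed.
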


If $\varphi : U \to S$ is a parameterization of a $k$-dimensional smooth surface $S$ and $A \subseteq U$ is Lebesgue measurable in $\R^k$, let us say $\varphi[A]$ is a \emph{measurable fragment} of $S$.  We can define the measure of $\varphi[A]$ to be the Lebesgue integral $\int_A \sqrt{\det G_\varphi} \, d\vec x$.  This measure is independent of parameterization.  For suppose $\psi : V \to S$ is another parameterization and $A \subseteq U' \subseteq U$, where $U'$ is open.  Then $\phi[U']$ is also a smooth surface, and $\psi^{-1} \circ \phi [U']$ is an open set $V' \subseteq V$.  By Fact \ref{surfvol}, $\int_{U'} \sqrt{\det G_\varphi} \, d\vec x = \int_{V'} \sqrt{\det G_\psi} \, d\vec x$.  Thus taking the infimum of these values over open cover covers of $A$ versus $\psi^{-1} \circ \phi [A]$ attains the same real number.  For any measurable fragment $A$ of a $k$-dimensional parameterized smooth surface, let $\vol_k(A)$ be this measure.  

Note that if $A,B,C$ are measurable fragments of $k$-dimensional parameterized surfaces, $A \cap B = \emptyset$, and $A \cup B = C$, then $\vol_k(C) = \vol_k(A) + \vol_k(B)$.  This is because for any parameterization $\varphi : U \to S \supseteq C$, 
\begin{align*}
\vol_k(C) = \int_{\varphi^{-1}[C]} \sqrt{\det G_\varphi} \, d\vec x &=\int_{\varphi^{-1}[A]} \sqrt{\det G_\varphi} \, d\vec x \,+ \int_{\varphi^{-1}[B]} \sqrt{\det G_\varphi} \, d\vec x \\
&= \vol_k(A) + \vol_k(B).
\end{align*}
Another important fact we will need is:
\begin{fact}
\label{lowerdim}
If $k<n$, $U \subseteq \R^k$ is open, and $\varphi : U \to \R^n$ is $C^1$, then the $n$-dimensional Lebesgue measure of $\varphi[U]$ is zero.
\end{fact}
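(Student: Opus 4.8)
The plan is to exploit the dimension gap $k<n$ through a subdivision argument, after reducing to compact pieces on which $\varphi$ is Lipschitz. First I would cover $U$ by countably many closed cubes $Q_j \subseteq U$, which is possible because $U$ is open and $\R^k$ is $\sigma$-compact. Since the $n$-dimensional Lebesgue measure is countably subadditive, it suffices to prove that each $\varphi[Q_j]$ is null, so I fix one such cube $Q$ of side length $s$.

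The next step is to observe that $\varphi$ is Lipschitz on $Q$. Since $\varphi$ is $C^1$ on the open set $U \supseteq Q$ and $Q$ is compact, the derivative $D\varphi$ is bounded on $Q$, say $\|D\varphi(\vec x)\| \leq L$ for all $\vec x \in Q$. Because $Q$ is convex, the mean value inequality then gives $|\varphi(\vec x) - \varphi(\vec y)| \leq L\,|\vec x - \vec y|$ for all $\vec x, \vec y \in Q$.

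Now comes the covering estimate. For each positive integer $m$, I would partition $Q$ into $m^k$ subcubes of side $s/m$, each of diameter $\sqrt{k}\,s/m$. The Lipschitz bound forces the image of each subcube to have diameter at most $L\sqrt{k}\,s/m$, so it sits inside a cube in $\R^n$ of side $L\sqrt{k}\,s/m$ and hence of $n$-dimensional volume $(L\sqrt{k}\,s/m)^n$. Summing over the $m^k$ subcubes bounds the outer measure of $\varphi[Q]$ by $m^k (L\sqrt{k}\,s/m)^n = (L\sqrt{k}\,s)^n\, m^{k-n}$. Since $k<n$, the exponent $k-n$ is negative, so letting $m \to \infty$ shows $\varphi[Q]$ has outer measure zero. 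Countable subadditivity then gives the result for $\varphi[U]$.

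The argument is essentially routine, and the only point requiring genuine care is the Lipschitz bound: one must select the covering cubes with compact closure inside $U$ so that continuity of $D\varphi$ on a compact set supplies a finite Lipschitz constant, and one must use convexity of the cubes to invoke the mean value inequality. The dimensional constant hidden in replacing balls by cubes is harmless, as it is absorbed into the factor $m^{k-n}$ that drives the estimate to zero.
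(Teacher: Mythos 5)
Your argument is correct. The paper does not prove this statement at all --- it is labelled a ``Fact'' and deferred to the classical analysis literature (Chapters 11--12 of Zorich), so there is no in-paper proof to compare against; your write-up is the standard covering argument one finds in those references. All the steps check out: the dyadic decomposition of the open set $U$ into countably many closed cubes, the Lipschitz bound on each compact cube via boundedness of $D\varphi$ and the mean value inequality on a convex set, and the count $m^k \cdot (L\sqrt{k}\,s/m)^n = (L\sqrt{k}\,s)^n m^{k-n} \to 0$, which is exactly where the hypothesis $k<n$ enters. The only points needing care --- that the cubes have compact closure inside $U$ so the Lipschitz constant is finite, that a set of diameter $d$ in $\R^n$ fits in a cube of side $d$, and that $\varphi[U]$ is a countable union of compact sets and hence measurable --- are all handled or harmless. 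This is a complete, self-contained proof of the background fact the paper takes for granted.
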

It follows that for any smooth surface $S \subseteq \R^n$, there is at most one natural number $k$ such that $S$ is parametrizable in $k$ dimensions.  Furthermore, if $A \subseteq S$ is Borel and $T \subseteq \R^n$ is an $m$-dimensional smooth surface, where $m>k$, then $\vol_m(A \cap T) = 0$.

In general, smooth surfaces $S$ do not need to be parameterized by a single map, but rather they are given by a countable \emph{atlas}, $\{ \varphi_i : i \in \omega \}$, where each $\varphi_i$ is a parameterization of a smooth surface $S_i$, $S = \bigcup_i S_i$, and some differentiability conditions hold on the compositions $\varphi_i^{-1} \circ \varphi_j$.  For our purposes here, we will only consider surfaces given by a \emph{finite} atlas. This suffices for many applications, such as for compact surfaces.  But more generally, we ignore the coherence conditions between the parameterizations and consider \emph{piecewise smooth surfaces}, which are just finite unions of parameterized surfaces.

Suppose $S \subseteq \R^n$ is a $k$-dimensional piecewise smooth surface given as a finite union of parameterized smooth surfaces in two ways, $S = \bigcup_{i \leq n} S_i = \bigcup_{i \leq m} T_i$.  Let $A \subseteq S$ be Borel.  By putting $S'_i = A \cap S_i \setminus \bigcup_{j<i}S_j$ and $T'_i = A \cap T_i \setminus \bigcup_{j<i}T_j$, we present $A$ as a disjoint union of Borel fragments of parameterized surfaces in two ways.  Consider the set of all Boolean combinations of the $S'_i$ and $T'_i$, besides the complement of $A$, listed as $\{ B_i : i \leq N \}$.  Then for each $j \leq m,n$, it follows by the observations above that 
$\vol_k(S'_j) = \sum_{B_i \subseteq S'_j} \vol_k(B_i)$ and
$\vol_k(T'_j) = \sum_{B_i \subseteq T'_j} \vol_k(B_i)$.
Therefore,
$$\sum_{i = 0}^n \vol_k(S'_i) = \sum_{i = 0}^m \vol_k(T'_i) = \sum_{i=0}^N \vol_k(B_i).$$
This allows us to unambiguously define $\vol_k(A)$ as $\sum_{i=0}^M \vol_k(C_i)$, where $\{C_i \}_{i \leq M }$ is \emph{any} partition of $A$ into parameterized $k$-dimensional Borel surface fragments.  Furthermore, if $C$ is the disjoint union of $A$ and $B$, where each is a Borel subset of a $k$-dimensional piecewise smooth surface, then taking partitions of $A$ and $B$ into parameterized Borel fragments yields one for $C$, call it $\{ P_i \}_{i \leq N}$.  Since $A = \bigcup_{P_i \subseteq A} P_i$ and $B = \bigcup_{P_i \subseteq B} P_i$, it follows that $\vol_k(C) = \vol_k(A) + \vol_k(B)$.  In summary, we have:

\begin{proposition}
Let $k\leq n$ be positive natural numbers.  The function $\vol_k$ is a finitely additive measure on the Borel subsets of $k$-dimensional piecewise smooth surfaces contained in $\R^n$.
\end{proposition}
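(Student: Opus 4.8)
The plan is to read the proposition as the summary of the preceding construction, so the proof will mostly consist of assembling facts already in hand. Write $\mathcal{M}_k$ for the family of all sets that occur as Borel subsets of some $k$-dimensional piecewise smooth surface contained in $\R^n$. The first task is to check that $\mathcal{M}_k$ is an admissible domain for a measure, i.e.\ a ring of sets. Given $A \subseteq S$ and $B \subseteq T$ in $\mathcal{M}_k$, the union $S \cup T$ is again a finite union of parameterized smooth surfaces, hence a $k$-dimensional piecewise smooth surface, and I claim that $A \cup B$, $A \cap B$, and $A \setminus B$ are Borel relative to $S \cup T$. This is because each piecewise smooth surface is itself a Borel subset of $\R^n$ (each such surface is $\sigma$-compact, being a finite union of continuous images of open subsets of $\R^k$, hence $F_\sigma$ and in particular Borel), so the trace characterization of subspace Borel sets shows these Boolean combinations remain in $\mathcal{M}_k$.

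Next I would dispatch the easy clauses. Non-negativity holds because for a parameterized fragment the defining integral $\int_A \sqrt{\det G_\varphi}\, d\vec x$ has a non-negative integrand, and it passes to arbitrary members of $\mathcal{M}_k$ through the partition-independent definition of $\vol_k$; moreover $\vol_k(\emptyset) = 0$, since the empty fragment contributes an integral over the empty set. The remaining clause, finite additivity, is precisely what the paragraph preceding the statement establishes: for disjoint $A, B \in \mathcal{M}_k$ with union $C$, one chooses partitions of $A$ and of $B$ into parameterized $k$-dimensional Borel fragments, concatenates them into a partition $\{P_i\}$ of $C$, and computes $\vol_k(C) = \sum_i \vol_k(P_i) = \vol_k(A) + \vol_k(B)$ using the partition-independence of $\vol_k$ together with the additivity of the Lebesgue integral over disjoint preimages.

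The only genuinely substantive ingredient is the well-definedness of $\vol_k$ on $\mathcal{M}_k$: the value assigned to a Borel fragment must not depend on how it is written as a finite disjoint union of parameterized pieces. This is where I expect whatever difficulty there is to lie, but it has already been handled in the text: Fact \ref{surfvol} supplies parameterization-invariance on a single surface, and refining any two presentations of a set to a common one via Boolean combinations --- with Fact \ref{lowerdim} killing any lower-dimensional overlaps --- reduces consistency to that invariance. With well-definedness granted, the proposition follows simply by recording that $\mathcal{M}_k$ is a ring of sets and that $\vol_k$ is a non-negative, $\emptyset$-normalized, finitely additive set function on it.
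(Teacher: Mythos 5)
Your proposal is correct and follows essentially the same route as the paper, whose ``proof'' of this proposition is exactly the two preceding paragraphs: well-definedness of $\vol_k$ via refinement of two presentations into common Boolean combinations (resting on Fact~\ref{surfvol} and the parameterization-independence of fragment measures), followed by additivity through concatenating partitions; your additional verifications that the Borel fragments form a ring of sets and that $\vol_k$ is non-negative with $\vol_k(\emptyset)=0$ are routine and left implicit in the text. One small mis-attribution: within a fixed dimension $k$ the consistency across overlapping parameterized pieces is handled by Fact~\ref{surfvol}, not by Fact~\ref{lowerdim}, which the paper uses only to compare surfaces of \emph{different} dimensions.
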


For a positive integer $n$, let $\mu_n$ be the Lebesgue measure on $\R^n$.
Let us call a set $A \subseteq \R^{<\omega}$ \emph{middling} if for all but finitely many $n<\omega$, $\mu_n(A \cap \R^n) < \infty$, and for infinitely many $n<\omega$, $\mu_n(A \cap \R^n) > 0$.  Intuitively, middling sets are larger than finite-dimensional sets but much smaller than the whole space.  Clearly, every open ball in $\R^{<\omega}$ is middling.

\begin{theorem}
There is a fine filter $\Gamma$ over $[\R^{<\omega}]^{<\omega}$ and a $\ll$-increasing sequence of positive infinitesimals $\la \e_i : i < \omega \ra \subseteq \pow(\R,\Gamma)$, such that, if 
$m(A) = \int \chi_A d\Gamma$ 
for $A \subseteq \R^{<\omega}$, then:
\begin{enumerate}
\item $\e_n = m([0,1]^n)$, the measure of the $n$-dimensional unit cube. 
\item For any measurable fragment $A$ of a $n$-dimensional piecewise smooth surface $S$, 
$$m(A)= \vol_n(A) \e_n + \delta,$$ 
where $\delta \ll \e_n.$
\item\label{countable} For any countable $C \subseteq \R^{<\omega}$, $m(C) \ll \e_1$.
\item For any middling Borel $A \subseteq \R^{<\omega}$ and any $\vec x \in \R^{<\omega}$, $m(A+\vec x) \approx m(A)$.
\end{enumerate}
\end{theorem}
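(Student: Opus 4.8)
The plan is to construct $\Gamma$ as the filter generated by an explicit family of conditions on finite snapshots $z \in [\R^{<\omega}]^{<\omega}$, in the style of Theorem \ref{probext}, and to verify the finite intersection property by building, for each finite subfamily of conditions, a single snapshot realizing all of them. The guiding picture is that an acceptable snapshot should decompose as $z = \{x_1,\dots,x_\ell\} \cup \bigcup_{d \le N} z_d$, where each $z_d$ samples the $d$-dimensional part of the relevant geometry and the sizes obey a rapidly increasing hierarchy $|z_0| \ll |z_1| \ll \dots \ll |z_N|$. The key device for producing each $z_d$ is Lemma \ref{combinatorial}: applied to the finitely additive measure $\vol_d$ on the algebra generated by the finitely many $d$-dimensional surface fragments under consideration together with the reference cube $[0,1]^d$, it yields a finite set lying on the union of those surfaces whose counting ratios approximate the corresponding $\vol_d$-ratios to any desired tolerance, and whose cardinality can be made as large as we please. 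Since $\vol_d$ gives measure zero to points and to lower-dimensional pieces, I can always choose the sample points of $z_d$ off any prescribed finite list of measure-zero sets (lower-dimensional cubes, countable sets, the ambient $\R^{d'}$ for $d' < d$), which supplies the ``generic position'' I will need.

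The generating family for $\Gamma$ consists of: the fineness sets $A_x = \{z : x \in z\}$ for $x \in \R^{<\omega}$; the volume-ratio sets $\{z : | |z \cap A|/|z \cap [0,1]^n| - \vol_n(A)| < 1/N\}$ for each measurable fragment $A$ of an $n$-dimensional piecewise smooth surface and each $N$; the dimension-gap sets $\{z : M|z \cap [0,1]^n| < |z \cap [0,1]^{n+1}|\}$ for each $n,M$; the smallness sets $\{z : M|z \cap C| < |z \cap [0,1]|\}$ for each countable $C \subseteq \R^{<\omega}$ and each $M$; and the translation sets $\{z : | |z \cap (A + \vec x)| / |z \cap A| - 1| < 1/N \}$ for each middling Borel $A$, each $\vec x$, and each $N$. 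Granting the finite intersection property, the four conclusions are immediate once I set $\e_n = m([0,1]^n) = [z \mapsto |z \cap [0,1]^n|/|z|]_\Gamma$ (a positive invertible element, since $[0,1]^n$ is nonempty and $F$ is fine). Indeed the volume-ratio sets give $m(A)/\e_n \sim \vol_n(A)$, hence $m(A) - \vol_n(A)\e_n \ll \e_n$, which is (2), with (1) the special case $A = [0,1]^n$; the dimension-gap sets give $\e_n \ll \e_{n+1}$ and, since $|z \cap [0,1]^{n+1}| \le |z|$ always, also $\e_n \ll 1$; the smallness sets give $m(C) \ll \e_1$; and the translation sets give $m(A + \vec x) \approx m(A)$.

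The substance is the finite intersection property. Given finitely many conditions, let $n_{\max}$ bound all dimensions mentioned and let $m_0$ bound the supports of all translation vectors $\vec x$ appearing. For each $d \le N$ (with $N \ge n_{\max}$ large) I build $z_d$ by a single application of Lemma \ref{combinatorial} to $\vol_d$ on the algebra generated by the named $d$-dimensional fragments, the cube $[0,1]^d$, and the slices $A \cap \R^d$ and $(A+\vec x)\cap \R^d$ of the named middling sets; because Lebesgue measure is translation invariant and $\vec x \in \R^d$ for $d \ge m_0$, the sets $A \cap \R^d$ and $(A + \vec x) \cap \R^d$ carry equal $\vol_d$-measure, so $z_d$ can be made to equalize their counts to within tolerance. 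I choose the points of $z_d$ avoiding every lower-dimensional cube, every named countable set, and every lower-dimensional surface, and I take $|z_d|$ so large that it dwarfs $\sum_{d' < d}|z_{d'}|$, the number $\ell$ of forced points, and every multiplier $M$ occurring at level $d$. Put $z = \{x_1,\dots,x_\ell\} \cup \bigcup_{d \le N} z_d$. By genericity each $[0,1]^n$ is sampled only by $z_n$ up to negligible lower-dimensional and forced contributions, so $|z \cap [0,1]^n| = |z_n \cap [0,1]^n|(1 + o(1))$ and the volume-ratio and dimension-gap conditions hold; each named countable set meets $z$ only in forced points, giving the smallness conditions; and for each middling $A$ the equality of counts at every level $d \ge m_0$, together with the negligibility of the $d < m_0$ contributions (bounded by $\sum_{d < m_0}|z_d|$ and dwarfed by the positive-measure sample $z_{d^*}$ at some $d^* \ge m_0$ with $\mu_{d^*}(A \cap \R^{d^*}) > 0$, available because $A$ is middling), yields the translation conditions.

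The main obstacle is precisely this combined finite intersection argument, and within it the most delicate point is the translation clause for middling Borel sets. I must ensure that $m(A)$ is genuinely controlled by the infinitely many high dimensions in which $A$ has positive finite Lebesgue measure — so that $m(A)$ is invertible and the low-dimensional mismatch produced by $\vec x$ (which need not preserve $\mu_d$ for $d < m_0$) is absorbed into an infinitesimal relative error — while simultaneously matching the surface-volume data at the intermediate dimensions and maintaining the strict size hierarchy that produces the $\ll$-increasing sequence $\la \e_i : i < \omega\ra$. Confirming that Lemma \ref{combinatorial} can be invoked uniformly across all sampled dimensions, on surfaces embedded in varying ambient spaces $\R^p$, and reconciled with the genericity requirements, is where the real work lies; the four stated properties then fall out of the construction.
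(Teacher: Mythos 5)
Your generating family and your overall strategy --- stratify the snapshot by dimension, apply Lemma \ref{combinatorial} once per level with a rapidly growing size hierarchy, and keep each level's new points off the lower-dimensional and countable junk --- are exactly the paper's, and your derivation of the four conclusions from the generating sets is fine. The gap is in the translation clause for middling sets, at the step where you claim that ``the equality of counts at every level $d \ge m_0$'' yields $\left| |z \cap A|/|z\cap(A+\vec x)| - 1 \right| < 1/N$. At a level $d \ge m_0$ where $\mu_d(A \cap \R^d) = 0$ (a middling set can be null at unboundedly many dimensions, e.g.\ positive only at even ones), Lemma \ref{combinatorial} gives no control on the ratio $|z_d \cap A| / |z_d \cap (A + \vec x)|$ --- its conclusion (4) requires the denominator set to have positive measure --- but only the absolute bounds $|z_d \cap A|,\ |z_d \cap (A+\vec x)| < \e\,|z_d \cap Y_{\mathrm{ref}}|$. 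If such a null level $d$ sits above the level $d^*$ at which you make $|z\cap A|$ large, then $\e |z_d|$ dwarfs $c|z_{d^*}|$ (that is exactly what your size hierarchy arranges), and the absolute discrepancy at level $d$ destroys the ratio. ``Approximately equal counts'' is the wrong invariant at null levels; what is needed is that the new points at level $d$ meet $A$ and $A+\vec x$ in \emph{exactly zero} points.

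The repair is available inside your own machinery: the sample produced by Lemma \ref{combinatorial} lies in the union of the positive-measure Boolean atoms, so feeding the null slices $A \cap \R^d$ and $(A+\vec x)\cap\R^d$ into the lemma forces the new points to avoid them outright. This is precisely the paper's condition that $z_d \setminus z_{d-1}$ be disjoint from every middling set that is null at the current sampling dimension (and its translates should be added to that list). Two smaller points are also left unaddressed. First, a middling set may have \emph{infinite} $\mu_d$-measure at finitely many levels $d$; Lemma \ref{combinatorial} cannot be applied to those slices, and an uncontrolled sample at such a level again ruins the ratio. The paper avoids both problems at once by not sampling the middling sets at every dimension: it picks one dedicated dimension $m_i$ per middling set, above all the surfaces and above all infinite-measure levels, where that set has positive finite measure and all the others have finite measure, equalizes counts there, and makes every other stage contribute zero to those counts. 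Second, your ``single application of Lemma \ref{combinatorial} to $\vol_d$'' at level $d$ mixes $d$-dimensional surface fragments living in various ambient $\R^p$ with Lebesgue slices in $\R^d$; you need to exhibit one finitely additive measure on the algebra they generate (doable, since $\vol_d$ and $\mu_d$ agree on subsets of $\R^d$, but it should be said). With these amendments your argument closes and essentially coincides with the paper's.
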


\begin{proof}
	Let $\Gamma$ be generated by closing the following collection under intersections and supersets:
	\begin{enumerate}
	\item $\{ z : \vec x \in z \}$, for $\vec x \in \R^{<\omega}$.
	\item $\{ z : |z \cap [0,1]^n| > k|z \cap [0,1]^m| \}$, for natural numbers $n>m$ and $k$.
	\item $\{ z : \left| |z \cap A|/|z \cap [0,1]^k| - \vol_k(A) \right| < 1/m \}$ for each Borel subset $A$ of a $k$-dimensional piecewise smooth surface $S \subseteq \R^n$ and each integer $m>0$.
	\item\label{countgen} $\{ z : |z \cap [0,1]| > k|z \cap C| \}$ for each countable $C \subseteq \R^{<\omega}$ and integer $k$.
	\item $\{ z : \left| |z \cap A|/|z \cap (A+\vec x)| - 1 \right| < 1/n \}$ for each middling Borel $A \subseteq \R^{<\omega}$, $\vec x \in \R^{<\omega}$ and integer $n>0$.

	\end{enumerate}
A filter containing all of these sets clearly gives us what we want.  We must show that this family has the finite intersection property.

Suppose we are given finitely many points,
piecewise smooth surfaces with given Borel subsets,
 middling Borel sets, and countable sets.  Let $\e>0$ be arbitrary.
	Order the surfaces as 
	$$S^1_0,\dots,S^1_{n_1},S^2_0,\dots,S^2_{n_2},\dots,S^k_0,\dots,S^k_{n_k},$$
	where each $S^d_i$ is $d$-dimensional.  Let $A^d_i$ be the given Borel subset of $S^d_i$.  We may assume that for $1 \leq d \leq k$, $A^d_0 = [0,1]^d$.  
	
	Let $z_0$ be the given set of points, and let $C$ be the union of the given countable sets.  We inductively build a sequence of finite sets $z_0 \subseteq z_1 \subseteq \dots \subseteq z_k$ as follows.  Suppose we have $z_{d-1}$.  For $i \leq n_d$, let 
$B^d_i = (A^d_i \setminus C) \setminus \bigcup_{j<d ; r \leq n_j} S^j_r$.
By Fact \ref{lowerdim}, $\vol_d(B^d_i) = \vol_d(A^d_i)$.
By Lemma \ref{combinatorial}, there is a finite $z_d \supseteq z_{d-1}$ with the following properties:
\begin{enumerate}
\item\label{dwarf} $|z_{d-1}|/|z_d| < \e$;
\item\label{nodisturb} $z_d \setminus z_{d-1} \subseteq \bigcup_{i \leq n_d} B^d_i$;
\item\label{proportions} for $1 \leq i \leq n_d$, $\left| |z_d\cap B^d_i| / |z_d\cap [0,1]^d| - \vol_d(B^d_i)\right|\ < \e$.
\end{enumerate}
When we arrive at $z_k$, we have a set satisfying the desired inequalities related to the Borel subsets of smooth surfaces.  (\ref{dwarf}) goes towards making smaller dimensional surfaces infinitesimal relative to larger dimensional ones.   (\ref{nodisturb}) ensures that our work in higher dimensions does not disturb the proportions of (\ref{dwarf}) and (\ref{proportions}) set up for the lower dimensions.

Let $M_1,\dots,M_s$ be the given middling Borel sets.  Pick an increasing sequence of natural numbers $m_1 < \dots < m_s$ such that each $S^d_i \subseteq \R^{m_1-1}$, $z_0 \subseteq \R^{m_1}$, and for $1 \leq i,j \leq s$, $\mu_{m_i}(M_j \cap \R^{m_i}) < \infty$ and $\mu_{m_i}(M_i \cap \R^{m_i}) > 0$.  For $1 \leq i \leq s$, let $y_i$ be the collection of indices $j$ such that $\mu_{m_i}(M_j \cap \R^{m_i}) > 0$.  We inductively build a sequence of finite sets $z_k \subseteq z_{k+1} \subseteq \dots \subseteq z_{k+s}$.

Assume we have $z_{d-1}$, where $d > k$.  Consider the collection of all translations $M_j + \vec x$ for $\vec x \in z_0 \cup \{ \vec 0 \}$ and $j \in y_d$.  For $j \in y_d$, $M_j \cap \R^{m_d}$ has the same Lebesgue measure as $(M_j+ \vec x) \cap \R^{m_d}$.  By Lemma \ref{combinatorial}, we can select a finite $z_d \subseteq \R^{m_d}$ such that:
\begin{enumerate}
\item\label{propmiddling} for $j \in y_d$ and $\vec x \in  z_0$, $\left| \frac{|z_d \cap M_j | }{ |z_d \cap (M_j + \vec x)|} - 1 \right| < \e$;
\item\label{nomess} $(z_d \setminus z_{d-1}) \cap (C \cup \R^{m_d - 1} \cup \bigcup_{i \notin y_d} M_i ) = \emptyset$.
\end{enumerate}
To check that this works, let $j \leq s$ and let $d$ be the largest integer such that $j \in y_d$.  Then the desired inequalties hold for $z_d$  by (\ref{propmiddling}).  They are preserved for $z_{k+s}$ by (\ref{nomess}).  The fact that $C$ is mentioned in (\ref{nomess}) ensures that we preserve the smallness properties of $C$ in relation to the smooth surfaces as well.  

Now $z_{k+s}$ is a finite set which, with a small enough choice of $\e$, witnesses the finite intersection property of the collection of interest.
	\end{proof}
	
	\begin{remark}
If $\kappa$ is a cardinal such that every set of reals of size $<\kappa$ has Lebesgue measure zero, then we can replace ``countable'' with ``$<\kappa$-sized'' in item (\ref{countable}) of the theorem.  This just requires a corresponding adjustment in item (\ref{countgen}) of the definition of $\Gamma$.  Let us call the resulting filter $\Gamma_\kappa$.
\end{remark}

\subsection{The Borel-Kolmogorov paradox}\label{sec bk}

The Borel-Kolmogorov paradox concerns a violation of intuitions about conditional probability in the context of geometry on the two-dimensional sphere.  
Let us first discuss the paradox, following the more synthetic-geometrical presentation of Easwaran \cite{MR2712139}.

Consider a sphere $S$ with a given axis $a_0$ and a small circular region $A$ around one of the poles determined by $a_0$.  For example, $A$ could be the set of all points north of the arctic circle on the earth. Now consider the set $\mathcal C_0$ of all great circles touching the two ends of the axis $a_0$.  For reasons of symmetry, the conditional probability $\Pr(A | C)$, or the proporition of measures $m(A \cap C) / m(C)$, should be the same for all $C \in \mathcal C_0$.  Furthermore, this should be in the same proportion as $m(A)/m(S)$.  Now let $B$ be the surface of revolution obtained by revolving $A$ around an axis $a_1$ perpendicular to $a_0$. Then $B$ is of strictly larger surface area than $A$.  Let $\mathcal C_1$ be the collection of great circles touching the ends of $a_1$.  For the same reasons as before, for all $C \in \mathcal C_1$, $m(B \cap C)/m(C)$ should be the same as $m(B)/m(S)$.  But there is a $C^* \in \mathcal C_0 \cap \mathcal C_1$.  Thus we have 
$$m(A) / m(S) = m(A \cap C^*) / m(C^*) = m(B \cap C^*) / m(C^*) = m(B) / m(S),$$
and so $m(A) = m(B)$.  This is a contradiction.

Of course, the argument works equally well if we replace ``$=$'' with ``$\approx$'' in the case that $m$ is non-Archimedean, and we arrange that $m(A)/m(S) \not\approx m(B)/m(S)$.  Kolmogorov's diagnosis of the error in the paradox was, ``This shows that the concept of a conditional probability with regard to an isolated given hypothesis whose probability equals 0 is inadmissible'' \cite{MR0079843}.  On our view, this cannot be the the right explanation, since the paradox carries the same force if we use a non-Archimedean analysis that gives all nonempty sets a nonzero measure, as we have done.

On our view, the error lies in the claim that the conditional probabilities $\Pr(A | C^*)$ and $\Pr(B|C^*)$ ``should be'' in the same (or approximately the same) proportion as the sizes of the background sets $A$ and $B$ relative to the sphere. This sounds somewhat intuitive, but we contend that it is much more intuitive that $m(A \cap C^*)/m(C^*)$ should be the proportion of the arc length of $C^*$ taken up by $A$, as is the case for our filter-integral on $\R^{<\omega}$, without regard to the larger background of the set $A$.

So why ``should'' $m(A \cap C^*) / m(C^*)$ and $m(A ) / m(S)$ be the same?  The argument advanced by Easwaran \cite{MR2712139} is a principle he calls ``conglomerability."  This is a generalization of a formula for weighted averages from the finite to the infinite case.  If $A_0,\dots,A_n$ are disjoint sets with nonzero measure, then simple arithmetic implies that for any measurable $B \subseteq A_0 \cup \dots \cup A_n$, 
$$\Pr(B) = \Pr(B | A_0)\Pr(A_0) + \dots + \Pr(B | A_n)\Pr(A_n).$$
It is easy to see that if the probability measure is countably additive, then this generalizes to countable collections of disjoint measurable sets.  Easwaran generalizes this further to say that if $\{ A_i : i \in I \}$ is any partition of a set $A$, then for every $B \subseteq A$, we should have the integral equation:
$$ \Pr(B) = \int \left( \sum_{i} \Pr(B | A_i) \chi_{A_i}(x) \right) dx.$$
(Note that since the $A_i$ are pairwise disjoint, the sum in the integrand has at most one nonzero term at a given $x$.)  In the context of our reasoning about the sphere, the idea is that when the two poles are removed, the set of great circles through those poles forms a partition of the sphere.  Thus the proportion of the sphere taken up by the set $A$ should be the conglomeration of all of the pieces meeting the great circles.  Since all of these pieces are congruent, this is an integral of a constant function with value $c = \Pr(A|C^*)$.  In other words, assuming we start with a sphere with surface area 1, $\Pr(A) = \int c \, dx = c$.

Now we know this kind of equation will not hold in general, but it is interesting to look closely at what it says in the context of our filter integrals.  Suppose $F$ is a fine filter over $[X]^{<\omega}$, $B \subseteq X$, and $\{ A_i : i \in I \}$ is a partition of $X$ into nonempty sets.  Then by definition we have:
$$ \int \chi_B \, dF = \int \left( \sum_i \chi_{B \cap A_i} \right) dF = \left[ z \mapsto \sum_i \frac{|B \cap A_i \cap z|}{|z|} \right]_F$$
$$= \left[ z \mapsto \sum_i \frac{|B \cap A_i \cap z|}{| A_i \cap z|} \frac{|A_i \cap z|}{|z|} \right]_F $$
This looks a lot like we are integrating $\sum_i \Pr(B | A_i) \chi_{A_i}(x)$.  However, in our situation, $\Pr(B | A_i)$ is an integral and typically a nonstandard element of $\pow(\Q,F)$.  If it has a standard part, this value depends on the convergence properties modulo $F$, and we should not expect a similar-looking formula to be substitutable back into the process and have the convergence come out unaffected.

Easwaran ultimately comes down in favor of the conglomerability principle, and due to several reasons including the above paradox,  against the position that conditional probabilities should be construed as ratios of unconditional measures.  Instead, he argues that conditional probabilities depend on a context, namely a given partition of the underlying space.  However, we contend that the filter integral gives a coherent and natural picture of conditional probability as a ratio of measures for any nonempty condition, and the geometric intuitions buttressing this picture outweigh the philosophical arguments for conglomerability.

\subsection{Dimension}
\label{dimsec}

The above result suggests a relevant notion of dimension of an arbitrary subset $A$ of $\R^{<\omega}$ as the Archimedean equivalence class of $\int \chi_A \, d\Gamma$.  We would like to understand the structural relations among the $\Gamma$-dimensions.  The usual integer dimensions are ordered in the expected way, while middling sets have dimension larger than all of these, and the whole space is still of higher dimension than any middling set.  There are also dimensions in between.  For example, $(\Q \times \R) \cap [0,1]^2$ has dimension between 1 and 2.  Its measure is larger than any finite length curve since it contains infinitely many pairwise disjoint unit length line segments, but its 2-dimensional volume is zero.  Thus its Archimedean class is between those of $\e_1$ and $\e_2$.

Suppose $F$ is a fine filter over $[X]^{<\omega}$.  Let $\dim_F(A)$ denote the Archimedean class of $\int \chi_A \, dF$.  Let us say $\dim_F(A) < \dim_F(B)$ when $\int \chi_A \, dF \ll \int \chi_B \, dF$.
Note that if $F' \supseteq F$, then $\dim_F(A) < \dim_F(B)$ implies $\dim_{F'}(A) < \dim_{F'}(B)$.
Let us say that a set $A \subseteq X$ is \emph{$F$-solid} if for all $Y \subseteq X$ such that $|Y|<|X|$, $\dim_F(Y) < \dim_F(A)$.  If every set of reals of size less than the cardinality of the continuum $\frak c = 2^\omega$ has Lebesgue measure zero, then each positive-volume Borel subset of a finite-dimensional surface in $\R^{<\omega}$ is $\Gamma_{\frak c}$-solid.

Recall that \emph{Martin's Axiom} (MA) says that for any partial order $\mathbb P$ satsifying the countable chain condition (ccc), and any collection $\{ D_\alpha : \alpha<\kappa \}$ of dense subsets of $\mathbb P$, where $\kappa < \frak c$, there is a filter $G \subseteq \mathbb P$ such that $G \cap D_\alpha \not= \emptyset$ for each $\alpha<\kappa$.  MA is implied by the continuum hypothesis (CH), but $\neg$CH does not decide MA.  MA implies that $\frak c$ is a regular cardinal, $2^\kappa = \frak c$ for all $\kappa<\frak c$, and every set of reals of size $<\frak c$ has Lebesgue measure zero.  See \cite{jech} for background.

\begin{lemma}
\label{malem}
Assume MA.  Let $F$ be a fine filter over $[\frak c]^{<\omega}$ that is generated by a base of size $\frak c$.  Suppose $\{A_\alpha : \alpha < \frak c\}$ and $\{B_\alpha : \alpha < \frak c\}$ are collections of subsets of $\frak c$ such that each $B_\alpha$ is $F$-solid, and for all $\alpha,\beta<\frak c$, $\dim_F(A_\alpha)<\dim_F(B_\beta)$.  Then there is a filter $F' \supseteq F$ with a base of size $\frak c$ and an $F'$-solid $C \subseteq \frak c$ such that for all $\alpha,\beta<\frak c$,
$\dim_{F'}(A_\alpha) < \dim_{F'}(C) <  \dim_{F'}(B_\beta).$
\end{lemma}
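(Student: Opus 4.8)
The plan is to build the set $C$ and the filter $F'$ simultaneously, taking $F'$ to be generated by $F$ together with the sets
\[
W^A_{\alpha,n}=\{z: n|A_\alpha\cap z|<|C\cap z|\},\quad
W^B_{\beta,n}=\{z: n|C\cap z|<|B_\beta\cap z|\},\quad
W^Y_{\gamma,n}=\{z: n|Y_\gamma\cap z|<|C\cap z|\},
\]
for $n\in\N$ and $\alpha,\beta<\frak c$, where $\{Y_\gamma:\gamma<\frak c\}$ enumerates \emph{all} subsets of $\frak c$ of size $<\frak c$. The only place MA is used is to justify this enumeration and to keep the recursion below $\frak c$: MA implies that $\frak c$ is regular and that $2^\lambda=\frak c$ for every infinite $\lambda<\frak c$, so there are exactly $\frak c$ subsets of $\frak c$ of size $<\frak c$. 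Granting this, the generating family $\mathcal G$ has size $\frak c$, so $F'$ has a base of size $\frak c$; and once $\mathcal G$ has the finite intersection property, having $W^A_{\alpha,n},W^B_{\beta,n}\in F'$ for all $n$ yields $\dim_{F'}(A_\alpha)<\dim_{F'}(C)<\dim_{F'}(B_\beta)$, while having $W^Y_{\gamma,n}\in F'$ for every small $Y_\gamma$ and every $n$ yields the $F'$-solidity of $C$. Thus everything reduces to constructing $C$ so that $\mathcal G$ has the finite intersection property.

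I would do this by recursion of length $\frak c$. Enumerate the finite subfamilies of $\mathcal G$ as $\langle R_\eta:\eta<\frak c\rangle$; it suffices to produce, for each $\eta$, a single $z_\eta\in\bigcap R_\eta$. I build $C$ by committing its values on an increasing sequence of finite sets, writing $\Delta_\eta=\bigcup_{\eta'<\eta}z_{\eta'}$, which has size $<\frak c$ by regularity of $\frak c$. At stage $\eta$, say $R_\eta$ consists of one base set $G\in F$ together with finitely many constraints naming $A_{\alpha_i}$, $B_{\beta_j}$, $Y_{\gamma_k}$ with multipliers $n_i,n'_j,\tilde n_k$. Fix large integers $m$ and $M$. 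Each of the sets $\{z:m|A_{\alpha_i}\cap z|<|B_{\beta_j}\cap z|\}$ (by the hypothesis $\dim_F(A_{\alpha_i})<\dim_F(B_{\beta_j})$), $\{z:m|Y_{\gamma_k}\cap z|<|B_{\beta_j}\cap z|\}$ and $\{z:m|\Delta_\eta\cap z|<|B_{\beta_j}\cap z|\}$ (both by $F$-solidity of $B_{\beta_j}$, since $Y_{\gamma_k}$ and $\Delta_\eta$ have size $<\frak c$), and $\{z:|B_{\beta_j}\cap z|>M\}$ lies in $F$. Their intersection with $G$ is again in $F$, hence nonempty, and I pick $z_\eta$ in it.

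The crux is then to set $C$ on the coordinates of $z_\eta$ not yet committed so that $|C\cap z_\eta|$ lands strictly between $L:=\max_{i,k}\big(n_i|A_{\alpha_i}\cap z_\eta|,\ \tilde n_k|Y_{\gamma_k}\cap z_\eta|\big)$ and $R:=\min_j |B_{\beta_j}\cap z_\eta|/n'_j$, while respecting the values of $C$ already fixed on $z_\eta\cap\Delta_\eta$. The potential obstacle is that the frozen portion might already push $|C\cap z_\eta\cap\Delta_\eta|$ past $R$, or leave too few free coordinates; this is precisely what the $\Delta_\eta$-domination clause prevents. Indeed, taking $m$ larger than twice the product of all multipliers appearing in $R_\eta$ forces, on $z_\eta$, both $L<R/2$ and $|z_\eta\cap\Delta_\eta|<R/2$; the latter bounds the frozen contribution below $R/2$ and leaves $|z_\eta|-|z_\eta\cap\Delta_\eta|$ free coordinates, which is most of $z_\eta$. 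Since $|B_{\beta_j}\cap z_\eta|>M$ makes the interval $(L,R)$ long, a routine count shows the free coordinates can be chosen so that $|C\cap z_\eta|$ is an integer in $(L,R)$, and then $z_\eta$ satisfies every constraint of $R_\eta$. I freeze $C$ on $z_\eta$ with this choice.

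Finally, consistency of the freezing is automatic: at any later stage $\eta'>\eta$ I only decide coordinates outside $\Delta_{\eta'}\supseteq z_\eta$, so $C\cap z_\eta$ is never altered and $z_\eta\in\bigcap R_\eta$ holds for the final $C$. Points lying in no $z_\eta$ are placed outside $C$. This gives the finite intersection property for $\mathcal G$ and completes the construction. The main difficulty is the per-stage interpolation of the previous paragraph, and the one genuinely clever ingredient is re-using the $F$-solidity of the $B_\beta$'s a second time—applied to the already-built set $\Delta_\eta$—to guarantee that the committed part of $C$ remains negligible on the chosen witness.
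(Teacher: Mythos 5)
Your proof is correct, but it takes a genuinely different and more elementary route than the paper's. The paper uses MA in its forcing form: it fixes a chain of elementary submodels $M_\alpha \prec H_{\frak c^+}$ of size $<\frak c$, shows that the sets $D_{\delta,s,n}$ are dense in the ccc poset $\fun(\frak c,2,{<}\omega)$ (Claim \ref{dense}), and assembles $C$ from filters generic over the $M_\alpha$; genericity over $M_{\alpha+1}$, which sees the part of $C$ already built, is what witnesses each finite constraint. You replace all of this by a direct recursion of length $\frak c$ over the finite subfamilies of the generating set, freezing $C$ on each witness $z_\eta$; your reuse of the $F$-solidity of $B_{\beta_j}$ against the committed set $\Delta_\eta$ plays exactly the role of the term $|\dom(p)\cap\delta|$ in the paper's dense sets, with $\Delta_\eta$ standing in for $M_\alpha\cap\frak c$. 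The combinatorial core --- interpolating $|C\cap z_\eta|$ strictly between the $A$/$Y$-side and the $B$-side --- is the same in both arguments, but your version isolates the fact that MA is needed only for cardinal arithmetic ($\frak c$ regular and $2^\lambda=\frak c$ for $\lambda<\frak c$, so the small sets can be enumerated in order type $\frak c$ and the recursion stays below $\frak c$), not for any genericity; that is a worthwhile simplification. Two small repairs: a finite subfamily need not mention any $B_\beta$, so before choosing $z_\eta$ you should enlarge $R_\eta$ by one constraint $W^B_{\beta,1}$ (some $B_\beta$ exists by hypothesis; without a $B$ in play nothing forces $|z_\eta|$, and hence the supply of free coordinates, to be large), and the constant in your choice of $m$ should be taken somewhat larger (say four times the product of all multipliers appearing in $R_\eta$) so that the portion of the interval $(L,R)$ reachable from the frozen contribution has length greater than $1$ and therefore contains an attainable integer value of $|C\cap z_\eta|$.
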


\begin{proof}
Let $\la X_\alpha : \alpha < \frak c \ra$ be an enumeration of a base for $F$.  Let $\la M_\alpha : \alpha < \frak c \ra$ be a sequence of elementary submodels of $H_{\frak c^+}$ such that:
\begin{itemize}
\item For each $\alpha< \frak c$, $|M_\alpha|<\frak c$, $M_\alpha \cap \frak c$ is an ordinal, and $M_\alpha \in M_{\alpha+1}$.
\item For each limit $\lambda < \frak c$, $M_\lambda = \bigcup_{\alpha<\lambda} M_\alpha$.
\item $F, \{(A_\alpha,B_\alpha,X_\alpha) : \alpha < \frak c\} \in M_0$.
\end{itemize}

For a set $X$, let $\fun(X,2,{<}\omega)$ be collection of finite partial functions from $X$ to $2$.  We partially order these functions by putting $p \leq q$ when $p$ extends $q$.  It is well-known that this partial order has the ccc.  For the rest of the argument, let $\mathbb P = \fun(\frak c,2,{<}\omega)$.

\begin{claim}\label{dense}
Suppose $\delta<\frak c$, $s \in [\frak c]^{<\omega}$, and $n \geq 2$.  For $p \in \mathbb P$, let $C_p = \{ \beta \in \dom(p) : p(\beta) = 1 \}$.   Consider the set
\begin{align*}
D_{\delta,s,n} =	&\hspace{1mm} \{ p \in \mathbb P: 	\dom(p) \in \bigcap_{i \in s} X_i, \text{ and for all } i,j \in s \\
			&\hspace{1mm} n\left(|\dom(p) \cap A_i| + |\dom(p) \cap \delta|\right) < |C_p \setminus \delta | < n^{-1}| \dom(p) \cap B_j| \}.
\end{align*}
Then $D_{\delta,s,n}$ is dense.
\end{claim}
\begin{proof}
Let $p \in \mathbb P$ be arbitrary.  
Using the assumptions that each $B_\beta$ is solid and of larger $F$-dimension than each $A_\alpha$, find $z \in \bigcap_{i \in s} X_i$ such that $z \supseteq \dom(p)$, $|z| > 2|\dom(p)|$, and for all $\alpha,\beta \in s$,
$$2n^2(|s| |z \cap A_\alpha | + |z \cap \delta|) < |z \cap B_\beta|.$$
By fineness, we may assume the numbers on the lefthand sides are all positive.
If $m = n^2 \left( |\bigcup_{i \in s} z \cap A_i | + |z \cap \delta| \right)$, then $|z \setminus\dom(p)| > m$.
Then choose a set $C^* \subseteq z \setminus (\dom(p) \cup \delta)$ of size $\frac{m}{n}+1$, which is possible since $n \geq 2$ and $|z \cap\delta|<m/n$.  Define an extension $q$ of $p$ with $\dom(q)= z$ by putting $q(\gamma) = 1$ for $\gamma\in C^*$, and otherwise $q(\gamma) = 0$ for $\gamma \in z \setminus \dom(p)$.  Then $q \in D_{\delta,s,n}$.
\end{proof}

By MA, let $G_0$ be $\mathbb P$-generic over $M_0$, i.e.\ $G_0$ is a filter that meets every dense subset of $\mathbb P$ which lies in $M_0$.  $G_0$ can be thought of as a function from $M_0\cap\frak c$ to $2$.  Let $C_0 = \{ \gamma : G_0(\gamma)= 1 \}$.  Assume inductively that we have a sequence of sets $\la C_\alpha \subseteq M_\alpha : \alpha < \beta \ra$, with $C_\alpha \cap M_{\alpha'} = C_{\alpha'}$ for $\alpha'<\alpha$.  If $\beta$ is a limit, let $C_\beta = \bigcup_{\alpha<\beta} C_\alpha$.  If $\beta = \beta'+1$, let $G_\beta$ be $\mathbb P$-generic over $M_\beta$, and let 
$$C_\beta = C_{\beta'} \cup \{ \gamma : \gamma > M_{\beta'} \cap \frak c, G_\beta(\gamma) = 1\}.$$
Finally, we let $C = \bigcup_{\alpha<\frak c} C_\alpha$.

We want to show that for each $\delta<\frak c$, each $s \in [\frak c]^{<\omega}$, and each positive $n \in\N$, there is $z \in \bigcap_{i \in s} X_i$ such that for $\alpha,\beta \in s$,
$$n(| z \cap A_\alpha | + |z \cap \delta|) < |z \cap C | < n^{-1} |z \cap B_\beta  |.$$
To find such $z$, let $\alpha$ be large enough such that $s,\delta \in M_\alpha$.  Then $C \cap M_{\alpha+1} = C_{\alpha+1}$, and $C_{\alpha+1} = C_\alpha \cup \{ \gamma : \gamma > M_{\alpha} \cap \frak c, G_{\alpha+1}(\gamma) = 1\}$, where $G_{\alpha+1}$ is $\mathbb P$-generic over $M_{\alpha+1}$.  By Claim \ref{dense}, there is some $z \in M_{\alpha+1}\cap\bigcap_{i \in s} X_i$ 
such that for all $i,j \in s$, 
$$2n\left(|z \cap A_i| + |z \cap M_\alpha |\right) < |z \cap C_{\alpha+1} \setminus M_\alpha | < (2n)^{-1}| z \cap B_j|.$$
In particular, $n(|z \cap A_i| + |z \cap \delta |) < |z \cap C |$, and
$$|z \cap C| = |z \cap C \cap M_\alpha| + |z \cap C \setminus M_\alpha | \leq 2|z \cap C \setminus M_\alpha| <n^{-1} | z \cap B_j|.$$

This means that the following family has the finite intersection property:
\begin{itemize}
\item $\{ z : n| z \cap A_\beta | < |z \cap C |\}$ for $n<\omega$ and $\alpha<\frak c$;
\item $\{ z : n|z \cap C | < |z \cap B_\beta| \}$ for $n<\omega$ and $\beta<\frak c$;
\item $\{ z : n| z \cap \gamma | < |z \cap C |\}$ for $n<\omega$ and $\gamma<\frak c$;
\item $X_\delta$ for $\delta<\frak c$.
\end{itemize}
Let $F'$ be the generated filter.
Then $C$ is $F'$-solid, and for $\alpha,\beta< \frak c$,
$\dim_{F'}(A_\alpha) < \dim_{F'}(C) < \dim_{F'}(B_\beta)$.
\end{proof}

\begin{theorem}
Assume MA and $2^{\frak c} = \frak c^+$.  There is an extension of $\Gamma_{\frak c}$ to an ultrafilter $U$ such that for any collections $\mathcal S,\mathcal T \subseteq \R^{<\omega}$ of size at most $\frak c$ such that $\dim_U(S)<\dim_U(T)$ for $S \in \mathcal S$ and $T \in \mathcal T$ and each $T \in \mathcal T$ is $U$-solid, there is a $U$-solid $C$ such that $\dim_U(S) < \dim_U(C) < \dim_U(T)$ for all $S \in \mathcal S$ and $T \in \mathcal T$.

Consequently, for any sets $A,B$ such that $B$ is solid and $\dim_U(A) < \dim_U(B)$, the collection of dimensions of $U$-solid sets in the open interval $(\dim_U(A),\dim_U(B))$ does not have a coinitial or cofinal set of size $\frak c$.
\end{theorem}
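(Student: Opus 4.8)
The plan is to construct $U$ by transfinite recursion of length $\frak c^+$, building an increasing chain of fine filters $\langle F_\xi : \xi < \frak c^+ \rangle$ with $F_0 = \Gamma_{\frak c}$, each generated by a base of size $\frak c$ (the defining family of $\Gamma_{\frak c}$ already has size $\frak c$), and setting $U = \bigcup_{\xi < \frak c^+} F_\xi$. I would fix a bijection of $\R^{<\omega}$ with $\frak c$, so that each $F_\xi$ may be regarded as a fine filter over $[\frak c]^{<\omega}$ and Lemma \ref{malem} applies verbatim. Since $|[\R^{<\omega}]^{<\omega}| = \frak c$ and $2^{\frak c} = \frak c^+$, there are exactly $\frak c^+$ subsets of $[\R^{<\omega}]^{<\omega}$ to decide, and since $(\frak c^+)^{\frak c} = \frak c^+$ there are exactly $\frak c^+$ pairs $(\mathcal S, \mathcal T)$ of subfamilies of $\p(\R^{<\omega})$ of size at most $\frak c$. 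I would enumerate all of these as a single list of $\frak c^+$ tasks arranged so that each task recurs cofinally often in $\frak c^+$; this is possible by routine bookkeeping using the regularity of $\frak c^+$.

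At a successor stage $\xi + 1$ there are two cases. If the $\xi$-th task is a subset $Y \subseteq [\R^{<\omega}]^{<\omega}$, I extend $F_\xi$ to $F_{\xi+1}$ by adjoining whichever of $Y$ and $[\R^{<\omega}]^{<\omega} \setminus Y$ retains the finite intersection property; this keeps the base size at $\frak c$ and guarantees that $U$ will be an ultrafilter. If the $\xi$-th task is a pair $(\mathcal S, \mathcal T)$, I first test whether in $F_\xi$ every $T \in \mathcal T$ is $F_\xi$-solid and $\dim_{F_\xi}(S) < \dim_{F_\xi}(T)$ for all $S \in \mathcal S$, $T \in \mathcal T$. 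If so, I enumerate $\mathcal S$ and $\mathcal T$ as $\frak c$-sequences (with repetitions) and apply Lemma \ref{malem} with $F = F_\xi$ to obtain $F_{\xi+1} \supseteq F_\xi$ with a base of size $\frak c$ together with an $F_{\xi+1}$-solid $C$ satisfying $\dim_{F_{\xi+1}}(S) < \dim_{F_{\xi+1}}(C) < \dim_{F_{\xi+1}}(T)$ for all relevant $S, T$; otherwise I set $F_{\xi+1} = F_\xi$. At a limit $\lambda < \frak c^+$ I take $F_\lambda = \bigcup_{\xi < \lambda} F_\xi$, a fine filter whose base has size at most $|\lambda| \cdot \frak c = \frak c$.

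The crux is to verify that $U$ satisfies the stated interpolation property, and here lies the main obstacle: the hypotheses refer to $\dim_U$ and $U$-solidity, whereas the interpolation is carried out relative to an intermediate $F_\xi$. The key observation is that the assertion ``$(\mathcal S, \mathcal T)$ satisfies the hypotheses in $U$'' unfolds into a conjunction of only $\frak c$-many statements of the form ``$W \in U$'': namely $\{ z : n|z \cap S| < |z \cap T| \} \in U$ for $S \in \mathcal S$, $T \in \mathcal T$, $n \in \N$, together with $\{ z : n|z \cap Y| < |z \cap T| \} \in U$ for each of the $\frak c$-many $Y \subseteq \R^{<\omega}$ with $|Y| < \frak c$ (there are $\frak c$ such $Y$ since $\frak c^{<\frak c} = \frak c$ under MA). Each such $W$ enters $U$ at some stage below $\frak c^+$, so by the regularity of $\frak c^+$ all of them appear by some $\eta < \frak c^+$; from that point on the $F_\xi$-relativized hypotheses hold. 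Because the task $(\mathcal S, \mathcal T)$ recurs cofinally, it is processed at some stage $\xi \ge \eta$, where the test succeeds and $C$ is produced. Finally, the observation recorded before Lemma \ref{malem} that strict dimension inequalities are preserved under filter extension, together with the analogous and immediate upward preservation of solidity, shows that $C$ is $U$-solid and $\dim_U(S) < \dim_U(C) < \dim_U(T)$ for all $S \in \mathcal S$, $T \in \mathcal T$.

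The ``Consequently'' clause follows by two applications of the interpolation property. Suppose $B$ is solid, $\dim_U(A) < \dim_U(B)$, and, toward a contradiction, that $\{ \dim_U(T) : T \in \mathcal T \}$ is a coinitial family of size $\frak c$ among the dimensions of $U$-solid sets in $(\dim_U(A), \dim_U(B))$. Applying the property to $\mathcal S = \{ A \}$ and $\mathcal T$ yields a $U$-solid $C$ with $\dim_U(A) < \dim_U(C) < \dim_U(T)$ for every $T \in \mathcal T$; then $\dim_U(C)$ lies in the interval but strictly below every member of the supposed coinitial family, a contradiction. Symmetrically, a cofinal family of size $\frak c$ would be given by some $\mathcal S$ of $U$-solid sets in the interval; applying the property to $\mathcal S$ and $\mathcal T = \{ B \}$ produces a $U$-solid $C$ with $\dim_U(C)$ in the interval and strictly above every member of $\mathcal S$, again a contradiction.
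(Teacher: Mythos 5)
Your proposal is correct and follows essentially the same route as the paper: a length-$\frak c^+$ recursion starting from $\Gamma_{\frak c}$, with bookkeeping that decides every subset of $[\R^{<\omega}]^{<\omega}$ and revisits every pair $(\mathcal S,\mathcal T)$ cofinally often, invoking Lemma \ref{malem} at the relevant successor stages and using the regularity of $\frak c^+$ together with the upward persistence of $\ll$ and of solidity under filter extension. Your explicit unfolding of the $U$-hypotheses into $\frak c$-many membership statements, and your derivation of the ``Consequently'' clause, simply spell out steps the paper leaves implicit.
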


\begin{proof}
 We will produce $U$ as an increasing union of filters $\la F_\alpha : \alpha < \frak c^+ \ra$, with $F_0 = \Gamma_{\frak c}$.  We assume inductively that each $F_\alpha$ has a base of size $\frak c$.

Let $\pi$ be a function on $\frak c^+$ such that for every triple $(X,\mathcal S,\mathcal T)$, where $X \subseteq  [\R^{<\omega}]^{<\omega}$ and $\mathcal S,\mathcal T \subseteq \p(\R^{<\omega})$ are sets of size $\frak c$, $\pi(\alpha) = (X,\mathcal S,\mathcal T)$ for unboundedly many $\alpha<\frak c^+$.
Suppose we are given $F_\alpha$, and for every $S \in \pi(\alpha)_1$ and $T \in \pi(\alpha)_2$,
$\dim_{F_\alpha}(S) < \dim_{F_\alpha}(T)$, and $T$ is $F_\alpha$-solid.
By Lemma \ref{malem}, there is a filter $F'_\alpha \supseteq F_\alpha$ with a base of size $\frak c$ and an $F_\alpha'$-solid set $C$ such that
$\dim_{F_\alpha'}(S) < \dim_{F_\alpha'}(C)< \dim_{F_\alpha'}(T)$
for $S \in \mathcal S$ and $T \in \mathcal T$.
Let $F_{\alpha+1}$ be the filter generated by $F'_\alpha$ together with either $\pi(\alpha)_0$ or its complement, according to whichever family has the finite intersection property.

Let $U = \bigcup_{\alpha<\frak c^+} F_\alpha$.  Suppose $\mathcal S,\mathcal T$ are as hypothesized.
Then then there is some $\alpha < \frak c^+$ such that 
$\dim_{F_\alpha}(S) < \dim_{F_\alpha}(T)$
for $S\in \mathcal S$ and $T \in \mathcal T$, and every $T \in \mathcal T$ is $F_\alpha$-solid.
Let $\beta \geq \alpha$ be such that $\pi(\beta)_1 = \mathcal S$ and $\pi(\beta)_2 = \mathcal T$.  Then at stage $\beta+1$, we obtain an $F_{\beta+1}$-solid set $C$ that separates the $F_{\beta+1}$-dimensions of $\mathcal S$ from those of $\mathcal T$.  This continues to hold for $\dim_U$.
\end{proof}

\subsection{Representing general multi-dimensional measures}

The Hausdorff measure is a well-known measure-theoretic construction which assigns to subsets $X \subseteq \R^n$ a family of (outer) measures $\mathcal H^\alpha(X)$, for real numbers $\alpha$, $0\leq\alpha\leq n$.  The idea is to give a generalization of volumes of smooth surfaces, which incorporates a general notion of dimension, for a wide class of subsets of $\R^n$.  A basic property of Hausdorff measure is that for any $X \subseteq \R^n$, there is at most one real $\alpha$ such that $0<\mathcal H^\alpha(X)<\infty$, while $\mathcal H^\beta(X) = \infty$ for $\beta <\alpha$, and $\mathcal H^\beta(X) = 0$ for $\beta >\alpha$.  If such a value $\alpha$ exists for a set $X$, then $\alpha$ is called the Hausdorff dimension of $X$.

There are many variations on the Hausdorff measure that all agree on smooth surfaces but disagree in general (see \cite[p.\ 63]{MR1730695} for 8 examples).  A related notion is Minkowski content, which gives finitely but not countably additive measures and agrees with the Hausdorff and Lebesgue measures in special cases.

Our filter-integral on $\R^{<\omega}$ can be thought as another generalization of the classical notion of volume in a rather different direction.  Aspects of our construction apply in an abstract setting that covers many of the families of measures discussed above.
A similar result about the Hausdorff measures has been obtained by Wattenberg with techniques of nonstandard analysis \cite{10.2307/2041916}.

\begin{theorem}
	Suppose $X$ is a set, $\mathcal A$ is an algebra of subsets of $X$, $(I,<)$ is a linear order, and $\{ \mu_i : i \in I \}$ is a family of functions on $\mathcal A$ satisfying the following conditions.
	\begin{enumerate}
		\item For each $i \in I$, $\mu_i$ is a finitely additive measure on $\mathcal A$ taking extended real values in $[0,\infty]$.
		\item For each $i<j$ in $I$ and each $Y \in \mathcal A$, $\mu_i(Y) \geq \mu_j(Y)$.
		\item For each $Y \in \mathcal A$, there is at most one $i \in I$ such that $0 < \mu_i(Y) < \infty$.
		\item For each $i \in I$, there is some $Y \in \mathcal A$ such that $0 < \mu_i(Y) < \infty$.
		\item For each $i \in I$ and $Y \in \mathcal A$, if $0 < \mu_i(Y)$, then $Y$ is an infinite set.
	\end{enumerate}
	Then there is a fine filter $F$ over $[X]^{<\omega}$ and a $\ll$-increasing sequence $\la \e_i : i \in I \ra \subseteq \pow(\R,F)$ such that for all $Y \in \mathcal A$ and $i \in I$ with $\mu_i(Y)<\infty$,
	$$\int \chi_Y \, dF = \mu_i(Y) \e_i + \delta,$$
	where $\delta \ll \e_i$.
\end{theorem}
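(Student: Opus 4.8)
The plan is to mimic the proof of the $\R^{<\omega}$ theorem, with the family $\{\mu_i\}$ playing the role of the graded volumes $\vol_k$. First I would observe that condition (5) forces every $\mu_i$ to give measure zero to singletons (a singleton is finite, so it cannot have positive $\mu_i$-measure), so that Lemma \ref{combinatorial} applies to each $\mu_i$. Using condition (4), fix for each $i \in I$ a reference set $W_i \in \mathcal A$ with $0 < \mu_i(W_i) < \infty$. I would then define $F$ to be generated by closing under finite intersections and supersets the collection consisting of: the fineness sets $\{z : x \in z\}$ for $x \in X$; the hierarchy sets $\{z : |z \cap W_j| > k\,|z \cap W_i|\}$ for $i < j$ in $I$ and $k \in \N$; and the proportion sets
$$\left\{ z : \left| \frac{|z \cap Y|}{|z \cap W_i|} - \frac{\mu_i(Y)}{\mu_i(W_i)} \right| < \frac{1}{m} \right\}$$
for $i \in I$, $Y \in \mathcal A$ with $\mu_i(Y) < \infty$, and $m \in \N$. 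Finally set $\e_i = \int \chi_{W_i}\,dF / \mu_i(W_i)$.

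Granting that this collection has the finite intersection property, the conclusion follows by routine unwinding. Fineness is built in. Since each $W_i$ is nonempty, fineness gives $\e_i > 0$, and the hierarchy sets give $n\e_i < \e_j$ for every $n \in \N$ whenever $i < j$, so $\la \e_i : i \in I\ra$ is $\ll$-increasing. For $Y$ and $i$ with $\mu_i(Y) < \infty$, writing $\delta = \int \chi_Y\,dF - \mu_i(Y)\e_i$, the proportion set with parameter $m$ is contained in the set where $|k\delta| < \e_i$ as soon as $m \geq |k|\,\mu_i(W_i)$; letting $m \to \infty$ therefore yields $\delta \ll \e_i$, which is the desired representation.

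The heart of the matter is the finite intersection property. Given finitely many generators, they mention finitely many points, finitely many sets from $\mathcal A$, and finitely many indices, which I list as $i_1 < \dots < i_r$. The key structural fact, extracted from conditions (2) and (3), is that if $0 < \mu_{i_s}(Y) < \infty$ then $\mu_{i_t}(Y) = \infty$ for $t < s$ and $\mu_{i_t}(Y) = 0$ for $t > s$; in particular, every set appearing at a level $i_t$ with $t < s$ is $\mu_{i_s}$-null. This is precisely the abstract replacement for Fact \ref{lowerdim}. I would then build an increasing chain of finite sets $z_1 \subseteq \dots \subseteq z_r$ by induction on the level. At stage $s$, for each relevant level-$i_s$ set $Y$ I form its \emph{pure part} $B = Y \setminus \bigcup(\text{relevant sets of lower level})$; since the removed sets are $\mu_{i_s}$-null and lie in $\mathcal A$, we have $B \in \mathcal A$ and $\mu_{i_s}(B) = \mu_{i_s}(Y)$. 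Applying Lemma \ref{combinatorial} to $\mu_{i_s}$, with the pure part of $W_{i_s}$ as the distinguished reference set, I extend $z_{s-1}$ to $z_s$ so that: (i) $z_s \setminus z_{s-1}$ lies inside the pure level-$i_s$ region, hence avoids every lower-level set, in particular every $W_{i_t}$ with $t < s$; (ii) the level-$i_s$ proportions $|z_s \cap B|/|z_s \cap W_{i_s}|$ are realized to within the required tolerance (sets with $\mu_{i_s} = 0$ receiving negligible proportion); and (iii) $|z_{s-1}|/|z_s|$ is as small as we like. Property (i) guarantees that the counts $|z \cap W_{i_t}|$ and the proportions established at lower levels are left untouched, while (iii), together with (ii) applied to $W_{i_s}$, forces $|z_s \cap W_{i_s}| \gg |z_s \cap W_{i_t}|$ for all $t < s$, securing the hierarchy sets. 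Because any points added at a later stage $s'' > s$ again land in a pure level-$i_{s''}$ region and so avoid all level-$i_s$ sets, every relation established at stage $s$ persists in the final set $z_r$, which therefore lies in all the given generators.

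The main obstacle is exactly this inductive bookkeeping: arranging the proportions at each level while simultaneously preserving the lower-level proportions and opening up the required infinitesimal gaps in the hierarchy. The one genuinely new ingredient compared to the geometric theorem is the derivation of the ``$\mu_{i_s}$-null at lower levels'' property from the monotonicity condition (2) and the uniqueness condition (3); once this is in hand, the construction runs exactly as in the proof for $\R^{<\omega}$, with Lemma \ref{combinatorial} supplying the finite approximations at each level.
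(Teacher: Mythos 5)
Your proposal is correct and follows essentially the same route as the paper's proof: generate $F$ from fineness sets, proportion sets relative to reference sets of positive finite measure, and hierarchy sets, then verify the finite intersection property by a level-by-level induction that applies Lemma \ref{combinatorial} to each $\mu_{i_s}$ while confining new points to regions disjoint from the lower-level sets so that previously arranged ratios persist. Your explicit extraction of the ``null at all later levels'' consequence of conditions (2) and (3) as the abstract stand-in for Fact \ref{lowerdim}, and the observation that condition (5) makes singletons null so that Lemma \ref{combinatorial} applies, are exactly the points the paper uses implicitly.
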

\begin{proof}
	We will show that the following family of sets has the finite intersection property, so that it is the basis of a fine filter $F$ over $[X]^{<\omega}$ with the desired properties.
	\begin{enumerate}
		\item $\{ z : x \in z \}$, for $ x \in X$.
		\item $\left\{ z : \left|\frac{|z\cap Y|}{|z\cap \overline{Y}|} -
		\frac{\mu(Y)}{\mu(\overline{Y})}\right|\ <\ \frac{1}{m} \right\}$ whenever $\mu_i(Y),\overline{Y} \in \mathcal A$, $\mu_i,\mu_i(\overline{Y})<\infty$, and   $0<\mu_i(\overline{Y})$;
		\item $\left\{ z : \frac{|z \cap Y|}{|z \cap \overline{Y}|} > m \right\}$ whenever $\mu_i(Y),\overline{Y} \in \mathcal A$, $0<\mu_i(\overline{Y})<\infty$ and $\mu_i(Y)=\infty$.
	\end{enumerate}
	
	To this end, consider finitely many points $x_1, \ldots, x_n \in X$ and finitely many sets $Y_1, \ldots, Y_v \in \mathcal A$.
	Define also
	\begin{itemize}
		\item $i_1 = \min\{i \in I : \mu_i(Y_j) > 0 \text{ for some } j \leq v \}$;
		\item $i_{n+1} = \min\{i > i_n : \mu_i(Y_j) > 0 \text{ for some } j \leq v\}$.
	\end{itemize}
	Since $v \in \N$, we can order such indexes as $i_1, \ldots, i_k$ with $k \leq v$.
	Without loss of generality, suppose also that for every $j=1, \ldots, k$ there exists a set $\overline{Y}_{j}$ such that $0<\mu_{i_j}(\overline{Y}_j) <\infty$. In fact, if there is no such set in the original list $Y_1, \ldots, Y_v$, it is sufficient to add one set that satisfies the desired inequalities for every dimension $i_1, \ldots, i_k$. We can do so by hypothesis (4).
	
	Consider the finite set $\{x_1, \ldots, x_n\}$ and the elements of $\mathcal A_1 = \{ Y_j : \mu_{i_1}(Y_j) < \infty  \}$.
	By Lemma \ref{combinatorial} applied to $\mu_{i_1}$, there exists a finite set $z_1$ such that
	\begin{enumerate}
		\item
		$x_1,\ldots,x_k\in z_1$\,;
		\item
		for every $Y \in \mathcal A_1$,
		$\left|\frac{|z_1\cap Y|}{|z_1\cap \overline{Y}_{1}|} -
		\frac{\mu_{i_1}(Y)}{\mu_{i_1}(\overline{Y}_{1})}\right|\ <\ \frac{1}{m}.$
	\end{enumerate}
	
	We can repeat a similar argument for $i_2$ in order to obtain a suitable finite set $z_2$. In this case, however, we have to take into account that we want our finite set $z_2$ to satisfy hypothesis (3) of the basis of $F$: namely, $\frac{|z_2 \cap Y|}{|z_2 \cap \overline{Y}_1|} > m$ for every $Y \not \in \mathcal A_1$.
	Thus we replace $\{x_1, \ldots, x_k\}$ with $z_1$, $\mathcal A_1$ with $\mathcal A_2 = \{ Y_j : 0<\mu_{i_2}(Y_j) < \infty  \}$ and we apply Lemma \ref{combinatorial} to obtain a finite set $z_2$ that satisfies
	\begin{enumerate}
		\item
		$z_1\subseteq z_2$\,;
		\item $z_2 \setminus z_1 \subseteq \bigcup \mathcal A_2 \setminus \bigcup \mathcal A_1$;
		\item
		for every $Y \in\mathcal A_2$,
		$\left|\frac{|z_2\cap Y|}{|z_2\cap \overline{Y}_{2}|} -
		\frac{\mu_{i_2}(Y)}{\mu_{i_2}(\overline{Y}_{2})}\right|\ <\ \frac{1}{m};$
		\item for every $Y \in\mathcal A_2$,
		$\frac{|z_2\cap Y|}{|z_1|}\ >\ m.$
	\end{enumerate}
	
	The second condition ensures that the inequalities arranged for $z_1$ with respect to $\mathcal A_1$ continue to hold for $z_2$.
	We proceed in a similar way and obtain the sets $z_3, \ldots, z_k$ that satisfy analogous properties.
	The set $z_k$ satisfies the desired conditions stated at the beginning of the proof for $x_1, \ldots, x_n$ and $Y_1, \ldots, Y_v \in \mathcal A$.
	Thus we have proved that the family of sets (1)--(3) has the finite intersection property, so it generates a fine filter $F$ over $[X]^{<\omega}$.
\end{proof}

\section{Non-Archimedean integration}
\label{sec nai}
Besides being able to represent real-valued measures, the filter integral has also relevant applications in non-Archimedean integration. Recall that for arbitrary fields $\F$, developing a non-Archimedean integration is still an open problem, despite some positive results established for particular classes of such fields. For a survey of this topic, see the introduction of \cite{real_valued_measure_preprint}. For known limitations of non-Archimedean integration, see \cite{lpcivita,forthcoming}.

In a non-Archimedean field $k \supset \R$, the idea underlying the Riemann and Lebesgue integrals of defining integrable functions as those that can be approximated arbitrarily well with step functions has some drawbacks. The main issue is that convergence in $k$ is much more restrictive than convergence in $\R$, so that it is not even possible to approximate arbitrarily well polynomials over finite intervals.
For instance, it is well-known that for all $\varepsilon\in\R$, $\varepsilon>0$ there exists a step function $s_\varepsilon : [0,1]\to \R$ such that $\max_{x \in [0,1]} |x^2-s_\varepsilon| < \varepsilon$. From this argument it is easy to obtain that for all positive $\varepsilon\in\R$, there exists a step function $s_\varepsilon : [0,1]_k \to k$ such that $\max_{x \in [0,1]_k} |x^2-s_\varepsilon| < \varepsilon$. However, no step function over $[0,1]_k$ can approximate $x^2$ up to an infinitesimal precision.

In order to overcome this issue in the Levi-Civita field, some authors have suggested to enlarge the family of ``elementary functions'' from step functions to analytic functions, with partial success \cite{lpcivita,measure_civita_1,measure_civita_2}. 

In this section we discuss how the filter integral provides an alternative approach to non-Archimedean integration. We start by acknowledging that in the non-Archimedean setting the filter integral lacks some geometric properties, especially when dealing with integrals over sets of an infinitesimal length.
Then we discuss a general representation theorem that allows us to definably extend real-valued measures to non-Archimedean field extensions of $\R$, in a way that the family of integrable functions is richer than that obtained with different approaches.
Finally, we show that the $F$-integral can be decomposed in a meaningful way according to the skeleton group of $k$.

\subsection{A geometric limitation}
%

Let $k \supset \R$, $\varepsilon \in k$, $0 < \varepsilon \ll 1$, $X = [0,1]_{\F}$ and consider the function $f = \varepsilon^{-1}\chi_{[0,\varepsilon]}$.
%
Since eventually the function $z \mapsto \sum_{x \in z} \frac{f(x)}{|z|}$ assumes an infinite value, $\int f\ dF$ is infinite number, regardless of the filter $F$.

This is at odds with the geometric intuition that, if the filter $F$ is chosen in a way that $\sqint \chi_{[0,1/n)} \ dF = \frac{1}{n}$ for every $n \in \N$, we would expect that the $F$-integral of $f$ is $1$.

In order to overcome this limitation, it might be possible to define a ``Riemann-like'' integral of a function $f: k \to k$ in the following way.
Let $z \in [k]^{<\omega}$ and let $x_1 < x_2 < \ldots < x_{|z|}$ be its elements. Then define the Riemann-like integral of a function $f$ as $$\left[z \mapsto \sum_{i=1}^{|z|-1} (x_{i+1}-x_i)f(x_i) \right]_{F}$$
The choice of evaluating $f$ at the left endpoint of the interval $[x_i, x_{i+1}]$ can be replaced by evaluating $f$ at other points of the interval.
However, this approach suffers from the same drawback discussed for other non-Archimedean measures, namely that the class of functions that can be approximated by step functions up to an arbitrary precision is too narrow. 
Thus, we find that it is more convenient to work directly with the $F$-integral.

\subsection{A general representation result}\label{sec representation}
Despite the limitation discussed above, the $F$-integral allows us to lift measures over $\R^n$ to $k^n$ in a way that the family of integrable functions is preserved.

In order to state the next result, we need to generalize the notion of $S$-continuous function, used in nonstandard analysis, to arbitrary non-Archimedean field extensions of $\R$.

\begin{definition}
	Let $\F \supset \R$ be an ordered field, $X \subseteq \F^n_{fin}$ and $f:X \to \F^m$.
	We say that $f$ is \emph{standardizable} iff
	\begin{itemize}
		\item $f(x) \sim f(y)$ whenever $x,y\in X$ and $x \sim y$;
		\item $f(x)$ is finite for every $x \in X$.
	\end{itemize}
	If $f$ is standardizable, we define its standard part $\st f : \st X \to \R^m$ as $\st f(x) = \st(f(y))$ for any $y \in X$ satisfying $\sh y = x$.
\end{definition}

Notice that, contrary to $S$-continuous functions of nonstandard analysis, the standard part of a standardizable function need not be continuous.

\begin{proposition}\label{prop sh int}
	Let $\F \supset \R$ be an ordered field, 
	and let $\mu$ satisfy the hypotheses of Theorem \ref{probext}. 
	Then there exists a fine filter $F$ on $[\F^n_{fin}]^{<\omega}$ such that for every standardizable $f : k^n_{fin} \to\F$, if $\sh f$ 
	is a bounded $\mu$-measurable function, then $f$ has a standard $F$-integral and $\sqint f dF = \int \sh{f} \, d\mu$.
\end{proposition}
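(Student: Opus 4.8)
The plan is to transport $\mu$ from $\R^n$ to the halos in $\F^n_{fin}$ and then invoke Theorem \ref{probext}. First I would record that, because $\F$ is an ordered field, its order is total, and hence every finite element has a genuine standard part: if $\shsup a > \shinf a$ for a finite $a$, a rational strictly between them would be simultaneously $\geq a$ and $\leq a$, forcing $a$ to equal it and collapsing the gap. Consequently $\st : \F^n_{fin} \to \R^n$ is everywhere defined and maps onto $\R^n$ (each real is its own standard part, so $\st(\F^n_{fin}) = \R^n$, matching the domain of the $\mu$-measurable function $\st f$). For a standardizable $f$, the condition $f(x) \sim f(y)$ whenever $x \sim y$ is exactly what makes $\st f : \R^n \to \R$ well-defined, and by construction $\st f(\st x) = \st(f(x))$ for every $x$.

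Next I would push $\mu$ forward along $\st^{-1}$. Set $\mathcal{A}' = \{ \st^{-1}[Y] : Y \in \mathcal{A}\}$ and $\mu'(\st^{-1}[Y]) = \mu(Y)$. Since $\st^{-1}$ commutes with Boolean operations, $\mathcal{A}'$ is an algebra on $\F^n_{fin}$ and $\mu'$ is a finitely additive probability measure with $\st_*\mu' = \mu$. The only hypothesis of Theorem \ref{probext} requiring attention is that singletons receive measure zero; although singletons typically do not lie in $\mathcal{A}'$, the sole use of that hypothesis in Lemma \ref{combinatorial} and Theorem \ref{probext} is to guarantee that positive-measure members of the algebra are infinite. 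This holds for $\mu'$: if $\mu(Y) > 0$ then $Y$ contains some $r$, and the halo $\st^{-1}[\{r\}]$ is infinite (it contains $r + \vec\epsilon$ for every infinitesimal vector $\vec\epsilon$, using $\F \supsetneq \R$; the degenerate case $\F = \R$ is just Theorem \ref{probext}). Thus Lemma \ref{combinatorial} applies to $\mu'$, so the sets $A_x = \{z : x \in z\}$ for $x \in \F^n_{fin}$ together with $A_{\st^{-1}[Y],m} = \left\{ z : \left| |\st^{-1}[Y]\cap z|/|z| - \mu(Y) \right| < 1/m \right\}$ have the finite intersection property, and I let $F$ be the fine filter they generate.

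Then I would reduce the field-valued $f$ to a real-valued companion $\bar f = \st f \circ \st : \F^n_{fin} \to \R \subseteq \F$. By Lemma \ref{finitestd}, $f(x) \sim \st(f(x)) = \st f(\st x) = \bar f(x)$ for every $x$, so Lemma \ref{finiteslice} gives $\int f \, dF \sim \int \bar f \, dF$; since infinitesimally close elements share both standard parts, $f$ has a standard $F$-integral exactly when $\bar f$ does, with the same value. The function $\bar f$ is bounded (as $\st f$ is) and $\mu'$-measurable, because $\{x : a < \bar f(x) \leq b\} = \st^{-1}[(\st f)^{-1}((a,b])] \in \mathcal{A}'$. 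Applying Theorem \ref{probext} to $\mu'$ and $F = F_{\mu'}$ yields that $\bar f$ has a standard $F$-integral with $\sqint \bar f \, dF = \int \bar f \, d\mu'$. Finally, the simple functions approximating $\st f$ against $\mu$ pull back under $\st$ to simple functions approximating $\bar f$ against $\mu'$ with identical integral values (because $\mu'(\st^{-1}[Y]) = \mu(Y)$), so $\int \bar f \, d\mu' = \int \st f \, d\mu$; chaining the equalities gives $\sqint f \, dF = \int \st f \, d\mu$.

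The main obstacle is the bookkeeping around the singleton hypothesis: since the atoms of $\mathcal{A}'$ are halos rather than points, I cannot quote Theorem \ref{probext} verbatim and must instead observe that its proof, and really that of Lemma \ref{combinatorial}, needs only that positive-measure sets be infinite, which $\mu'$ supplies. The remaining care lies in checking that standardizability is precisely the property guaranteeing both that $\bar f$ is well-defined and that $\bar f \sim f$ pointwise, so that Lemma \ref{finiteslice} legitimately transfers the entire computation from the nonstandard $f$ to the genuinely real-valued $\bar f$, where Theorem \ref{probext} can do its work.
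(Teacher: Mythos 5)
Your proof is correct, but it organizes the argument differently from the paper. The paper keeps $\mu$ on $\R^n$ together with its canonical filter $F'$ from Theorem \ref{probext}, and builds the filter on $[\F^n_{fin}]^{<\omega}$ by hand: it generates it from the sets $A_{f,\e}$ for \emph{all} standardizable $f$ simultaneously, and verifies the finite intersection property by lifting each witness $z'\in[\R^n]^{<\omega}$ to a finite $z\subseteq \F^n_{fin}$ whose fibers under the standard-part map all have equal cardinality, so that averaging $f$ over $z$ reproduces, up to an infinitesimal, the average of $\st f$ over $z'$. You instead transport the measure: you push $\mu$ forward along $\st^{-1}$ to the halo algebra $\mathcal A'$ on $\F^n_{fin}$, reuse Theorem \ref{probext} to obtain a filter directly on the nonstandard space, and then replace $f$ by the halo-constant function $\st f\circ\st$, invoking Lemmas \ref{finitestd} and \ref{finiteslice} to see that this infinitesimal perturbation changes neither the existence nor the value of the standard integral. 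Your route buys a cleaner modular structure (measure transport plus a one-line perturbation step) at the cost of having to reopen Lemma \ref{combinatorial} and check that the singleton hypothesis is used only to make positive-measure sets infinite --- a point you handle correctly, since a nonempty halo in a proper ordered field extension of $\R$ is infinite, and your preliminary observation that totality of the order makes $\st$ total on finite elements is also needed and correct. The two resulting filters need not coincide (yours is generated only by the halo preimages of sets in $\mathcal A$, the paper's by all the $A_{f,\e}$), but the proposition asserts only existence and the standard integral is preserved under filter extension, so this is immaterial.
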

\begin{proof}
	
	Let $F'$ be a filter satisfying Theorem \ref{probext} for ${\mu}$.	
	Then for any $\varepsilon \in \R$, $\varepsilon > 0$, and for any finitely many $f_1,\ldots,f_m$ such that $f_i : \R^n \to \R$ is a bounded $\mu$-measurable function, and for any finitely many points $x_1,\ldots ,x_\ell$ in $\R^n$ there is $A \in F'$ such that for any $z \in A$, 
$\{x_1, \ldots, x_\ell\} \subseteq z$ and
	$$
		\left| \sum_{x \in z} \frac{\st f_i(x)}{|z|} - \int f_i\ d\mu \right| < \varepsilon.
	$$
	
Now let $x_1,\dots,x_\ell \in k^n_{fin}$, let $f_1,\dots,f_m$ be standardizable, and let $\e>0$ be a real number.  Let $A \in F'$ be given with respect to the functions $\st f_1,\dots,\st f_m$ and the points $\sh x_1,\dots,\sh x_\ell$, and let $z' \in A$.  Let $z \in [k^n_{fin}]^{<\omega}$ be such that
\begin{itemize}
		\item $x_1,\dots,x_\ell \in z$;
		\item $\sh z = z'$;
		\item for every $r, s \in z'$, $|\{ x \in z : \sh{x} = r \}| = |\{ x \in z : \sh{x} = s \}|$.
	\end{itemize}
	These conditions ensure that when we compute the average of $f_i$ over $z$, we get the same standard part as computing the average of $\st f_i$ over $z'$.  

Thus for standardizable $f$ and real $\e>0$, if $A_{f,\e}$ is the set of $z \in [k^n_{fin}]^{<\omega}$ such that $$
		-\e < \sum_{x \in z} \frac{f(x)}{|z|} - \int \st f_i\ d\mu  < \varepsilon,
	$$
then the collection of all $A_{f,\e}$, together with the sets $\{ z : x \in z \}$ for $x \in k^n_{fin}$, generates a filter $F$ as desired.
\end{proof}

This proof can be adapted to prove the non-Archimedean counterpart of Theorem \ref{ext}.

\begin{corollary}
	Let $\F \supset \R$ be an ordered field, 
	and let $\mu$ satisfy the hypotheses of Theorem \ref{ext}.
	Then there exists a countable partition $\vec P$ of $k^n_{fin}$ and a fine filter $F$ on $[\F^n_{fin}]^{<\omega}$ such that for every standardizable $f : k^n_{fin} \to\F$, if $\sh f$ is a $\mu$-integrable function, then $f$ has a standard $(F,\vec P)$-integral and $\sqint f \, d(F,\vec P) = \int \sh{f} \, d\mu$.
\end{corollary}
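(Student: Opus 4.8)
The plan is to adapt the proof of Proposition \ref{prop sh int} to the $\sigma$-finite setting, the new wrinkle being that the real identity supplied by Theorem \ref{ext} carries a weight function $w$, whereas the conclusion here uses the unweighted partition-integral $\sqint f\,d(F,\vec P)$; so the weight must be absorbed into the choice of partition rather than applied to $f$. First I would record the lifting principle underlying Proposition \ref{prop sh int}: if $f:k^n_{fin}\to\F$ is standardizable with $\st f=g$, and $z\in[k^n_{fin}]^{<\omega}$ has shadow $z'=\st[z]$ with fiber sizes $c_r=|\{x\in z:\st x=r\}|$, then by standardizability and the argument of Lemma \ref{finiteslice}, within any prospective partition piece $Q$ the average $\frac{1}{|z\cap Q|}\sum_{x\in z\cap Q}f(x)$ is infinitesimally close to the fiber-weighted average $\bigl(\sum_{r}c_r g(r)\bigr)/\bigl(\sum_r c_r\bigr)$ of $g$ over the corresponding shadow. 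Thus the real behaviour of the partition-integral is governed entirely by $z'$ and the profile $(c_r)$, both of which we control.

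The central device is to build $\vec P$ out of \emph{measure-one pieces}. A per-piece average, being a weighted average of values of $g$, always lies between the infimum and supremum of $g$ on the piece; hence it can reproduce $\int_S g\,d\mu$ only when the shadow $S$ of the piece has $\mu(S)=1$, in which case a $\mu$-faithful fiber profile makes the piece contribute exactly the $\mu$-average over $S$, which then equals $\int_S g\,d\mu$. This is precisely how the weight $w=\mu(P_i)$ of Theorem \ref{ext} becomes internalized: rather than scaling each piece by its measure, we cut the space into pieces of measure $1$, so the weight is identically $1$. Concretely, using the Darboux property of the non-atomic part of $\mu$, partition the support into shadow sets of $\mu$-measure exactly $1$ and pull these back under $\st$; each $\mu$-null region (in particular shadows lying outside the support, and the shadows of the finitely many forced points required by fineness) is absorbed into an adjacent measure-one piece, where it is diluted to an infinitesimal contribution exactly as the term $\ell/M$ is controlled in Lemma \ref{combinatorial}. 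An atom $p$ of mass $a$ is handled separately using that its monad $\st^{-1}[\{p\}]$ is \emph{infinite}: split the monad into $\lfloor a\rfloor$ pieces, each contributing $g(p)$ as the average over a monad-sample, and combine a further monad-fragment of relative weight $a-\lfloor a\rfloor$ with a non-atomic filler of measure $1-(a-\lfloor a\rfloor)$ to form one more measure-one piece, so that the atom's total contribution is $a\,g(p)$. Since $\mu$ is $\sigma$-finite it has only countably many atoms, each split finitely, so the resulting $\vec P$ is countable as required; summing over all pieces yields $\int g\,d\mu$.

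With $\vec P$ fixed, I would construct $F$ by the finite intersection property, exactly as in Theorem \ref{probext} and Proposition \ref{prop sh int}. Given finitely many points $x_1,\dots,x_\ell\in k^n_{fin}$, finitely many standardizable $f_1,\dots,f_m$, and a real $\e>0$, apply Theorem \ref{ext} (equivalently, Lemma \ref{combinatorial} piece by piece) to the standard parts $g_i=\st f_i$ to obtain a real finite set realizing the measure-one proportions to within $\e$; then lift it to a finite $z\subseteq k^n_{fin}$ containing the $x_j$, with $\st[z]$ equal to that real set, with balanced fibers over the non-atomic shadow points of each piece, fibers of the prescribed relative size $a$ over each atom, and the null-shadow points absorbed as above. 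The lifting principle of the first paragraph then shows that the partition-average of each $f_i$ over $z$ differs from $\int g_i\,d\mu$ by less than $\e$ up to an infinitesimal. Declaring $F$ to be generated by all such sets together with the fineness sets $\{z:x\in z\}$ places the partition-average within every real $\e$ of $\int\st f_i\,d\mu$ on a set in $F$, so the upper and lower standard parts coincide and $\sqint f_i\,d(F,\vec P)=\int\st f_i\,d\mu$.

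The main obstacle is the simultaneous treatment of atoms inside the unweighted framework: a single per-piece average can never exceed $\sup g$, so an atom of mass $a>1$ cannot be carried by one piece, and the only recourse is to exploit the infinitude of the monad to spread it across several measure-one pieces while maintaining a coherent fiber profile. Verifying that the atom-splitting, the $\mu$-faithful balancing on the non-atomic pieces, and the dilution of the forced and null-shadow points can all be met by a \emph{single} finite set $z$ — so that the whole generating family has the finite intersection property — is the technical heart of the argument; everything else is a routine transfer of Theorem \ref{ext} and Proposition \ref{prop sh int} across the standard-part map.
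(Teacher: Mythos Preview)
You have read the Corollary statement very literally and noticed that it omits the weight function $w$ present in Theorem~\ref{ext}, and you have built an elaborate mechanism to absorb that weight into a measure-one partition. The paper's own ``proof'' is the single sentence ``This proof can be adapted to prove the non-Archimedean counterpart of Theorem~\ref{ext}'', which indicates that the intended argument is the direct lift of Theorem~\ref{ext}: pull back the partition $\vec P$ of $\R^n$ to $\vec Q = \{\st^{-1}[P_i]\}$, pull back the weight to $w\circ\st$, and build the filter on $[k^n_{fin}]^{<\omega}$ from the one on $[\R^n]^{<\omega}$ exactly as in Proposition~\ref{prop sh int}, using balanced fibers over each shadow point. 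The omission of $w$ in the Corollary statement is almost certainly an imprecision rather than a strengthening, so your ``new wrinkle'' is not one the authors are trying to address.

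Taken on its own terms, your measure-one scheme has a genuine gap. The per-piece contribution in the $(F,\vec P)$-integral is an unweighted average, so a piece with shadow $S$ contributes $\st\bigl(\tfrac{1}{\mu(S)}\int_S g\,d\mu\bigr)$, which equals $\int_S g\,d\mu$ only when $\mu(S)=1$. Your construction therefore requires the total mass to decompose exactly into unit pieces, and this fails in simple cases: if $\mu$ is purely atomic with a single atom of mass $a\notin\N$, there is no non-atomic filler to complete the fractional monad piece, and any partition of the monad into finitely many pieces contributes an integer multiple of $g(p)$, never $a\,g(p)$; likewise, if $\mu$ is non-atomic with finite non-integer total mass and no atoms, the leftover piece of measure $r<1$ contributes $\tfrac{1}{r}\int_S g\,d\mu$ rather than $\int_S g\,d\mu$, and fineness prevents you from simply excluding it. These are not exotic configurations, and you do not indicate how to handle them. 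The clean fix is to reinstate the weight function, after which the adaptation is routine.
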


In order to assess the relevance of these results, we suggest a comparison with Proposition 3.16 of \cite{lpcivita} in the case $\F = \civita$, the Levi-Civita field.
Proposition 3.16 of \cite{lpcivita} shows that any real-valued function that is not locally analytic at almost every point of its domain does not have a measurable representative with respect to the non-Archimedean uniform measure developed by Shamseddine and Berz \cite{measure_civita_1, measure_civita_2}.

Conversely, Proposition \ref{prop sh int} applied with $\mu = \lambda$, the Lebesgue measure over $\R^n$, shows that it is possible to define a fine filter $F$ on $[\civita^n_{fin}]^{<\omega}$ and a countable partition $\vec P$ of $\civita^n_{fin}$ such that
\begin{itemize}
	\item $\mu\left(\Pi_{i = 1}^n [a_i,b_i]_\civita\right) \approx \Pi_{i = 1}^n\sh{(b_i-a_i)}$ for every finite $a_1, \ldots, a_n, b_1, \ldots, b_n \in \civita$, i.e.\ $\mu$ is infinitesimally close to the uniform measure of Berz and Shamseddine, and
	\item if $f$ is standardizable and $\sh{f}$ is Lebesgue integrable, then $f$ has a standard $(F,\vec P)$-integral, and moreover its $(F,\vec P)$-integral is infinitesimally close to the Lebesgue integral of $\sh{f}$.
\end{itemize}
Thus the filter integral allows for a broader family of functions with a well-defined standard integral. Using the minimal definable filter instead of a larger filter (such as an ultrafilter) has the benefit of not assigning a standard integral to functions whose standard part is not $\mu$-measurable. 
The possibility of meaningfully enlarging the functions with a standard integral might enable further applications to mathematical models in the spirit of the ones discussed in Section 5 of \cite{lpcivita} or in Sections 4.6 and 4.7 of \cite{real_valued_measure_preprint}.

Recall also that a non-uniform measure theory over non-Archimedean fields has not yet been developed. In contrast, the filter integral allows one to do so by extending real-valued measures, an improvement upon the state of the art in this field.

\subsection{The decomposition of the $F$-integral for arbitrary fields}\label{sec decomposition}
The value of the filter integral of a function that takes values in $k \supset \R$ can be characterized based on the skeleton group of $\F$.

Invoking the Axiom of Choice, the Hahn Embedding Theorem \cite{Hahn,Clifford} allows us to write the elements of any $\F \supset \R$ as generalized formal power series over an ordered group $\Gamma$ (often called the skeleton group of $\F$) with real coefficients, in a way that the exponents of the terms with nonzero coefficients form a well-founded subset of $\Gamma$. Let $\lambda : \F \to \Gamma$ be the valuation chosen in such a way that if $\lambda(x) > 0$, then $|x|\ll 1$. The subfield $\{ x \in \F : \lambda(x) = 0\}$ is isomorphic to $\R$.

Let $\varepsilon > 0$ be an infinitesimal such that $\lambda(\varepsilon) = 1$.
We can write every $y \in \F$ as $\sum_{i \in \Gamma} a_i \varepsilon^i$ with $a_i \in \R$. 
Similarly, every function $f: X \to \F$ decomposes as $\sum_{i \in \Gamma} f_i \varepsilon^i$, where each $f_i : X \to \R$. For every $i \in \Gamma$ define also $f^{< i} = \sum_{j < i} f_j \varepsilon^j$ and $f^{> i} = \sum_{j > i} f_j \varepsilon^j$.

We would like to describe $F$-integrals of $\F$-valued functions in terms of some “orders of magnitude” components. It is possible to do so based on the properties of the skeleton group $\Gamma$.
For an arbitrary skeleton group $\Gamma$, we can exploit the decomposition of $f$ as $f^{< i} + f_i + f^{>i}$ to obtain the following $F$-integral decomposition:
\begin{equation}\label{eqn integral decomposition}
\int f dF = \int f^{<i} dF + \seq{\varepsilon^i} \int f_i dF + \int f^{>i} dF.
\end{equation}
In the above decomposition, since $\left| f^{>i} \right| \ll \varepsilon^i$, $\int f^{>i} dF \ll \seq{\varepsilon^i}$.
If $f_i$ has a finite standard $F$-integral, $\int f_i dF = \sqint f_i dF + \delta$, with $\delta \ll 1$.
Thus
$$
\int f dF = \int f^{<i} dF + \seq{\varepsilon^i} \sqint f_i dF + \eta_i,
$$
where $\eta_i = \seq{\varepsilon^i}\delta + \int f^{>i} dF$ satisfies $\eta_i \ll \seq{\varepsilon^i}$.

Additionally, if there exists $i\in \Gamma$ such that $f_i(x) \ne 0$ for some $x\in X$ and $f^{<i}(x) = 0$ for every $x\in X$, the $F$-integral of $f$ can be expressed as
$$
\int f dF = \seq{\varepsilon^i} \sqint f_i dF + \eta_i,
$$
i.e.\ the $F$-integral of $f$ is the integral of its leading term plus another term of a smaller magnitude.

If $\Gamma = \Z$, it is possible to further refine the above decomposition.
Let $\F \supset \R$ have the skeleton group $\Z$ (e.g.\ the field of formal Laurent series).  Assume that the function $f$ is bounded in $k$ and each $f_i$ has a finite standard $F$-integral.  Let $n \in \Z$ be such that $|f| < \e^n$.  For any $m \geq n$, the decomposition \eqref{eqn integral decomposition} can be further refined as

\begin{align}
\notag
\int f dF &=  \sum_{i = n}^m \seq{\varepsilon^i} \int f_i dF + \int f^{>m} dF \\
\label{eqn integral decomposition 2}
&= \sum_{i = n}^m \seq{\varepsilon^i} \left( \sqint f_i dF + \delta_i \right) + \int f^{>m} dF.
\end{align}
Let us compare the contribution of the error term $\delta_i$ to higher degrees of $\varepsilon$.
Let $a_i = \sqint f_i dF$: the error term is the equivalence class of the function $z \mapsto \frac{\sum_{x \in z} f_i(x)}{|z|}-a_i$. If $\delta_i \ne 0$, then $\left| \frac{\sum_{x \in z} f_i(x)}{|z|}-a_i \right|$ is eventually a positive real number. As a consequence, $|\delta_i| \gg \varepsilon^n$ for every $n > 0$. Therefore $\varepsilon^i \gg \varepsilon^i|\delta| \gg \varepsilon^j$ for every $j > i$.
Thus, each term of the sum \eqref{eqn integral decomposition 2} operates on a different scale, with no arithmetic influence between scales:
$$
\ldots \gg \seq{\varepsilon^{i-1}}|\delta_{i-1}| \gg \seq{\varepsilon^i}|a_i| \gg \seq{\varepsilon^i}|\delta_i| \gg \seq{\varepsilon^{i+1}}|a_i| \gg \ldots
$$

\section{Product Spaces}
\label{products}

Suppose we have fine filters $F,G$ over $[X]^{<\omega},[Y]^{<\omega}$ respectively.  
We construct a fine filter $F \times G$ over $[X \times Y]^{<\omega}$ concentrating on the finite rectangles, the collection of which is naturally isomorphic to $[X]^{<\omega} \times [Y]^{<\omega}$.  We put sets into $F \times G$  essentially when for a large subset of the $Y$-axis, the cross-section along the $X$-axis is large.   More precisely, $F \times G$ is the set of $A \subseteq [X \times Y]^{<\omega}$ such that
$$\{ z_1 \in [Y]^{<\omega} : \{ z_0 \in [X]^{<\omega} : z_0 \times z_1 \in A \} \in F \} \in G.$$
It is straightforward to check that $F\times G$ is a filter.  Furthermore, if $F$ and $G$ are both ultrafilters, then so is $F \times G$.

This operation is not symmetric.  For suppose $X$ is an infinite set and $F$ is a fine filter over $[X]^{<\omega}$. For $z \in [X]^{<\omega}$, let $A_z$ be the set of finite $z' \supseteq z$, and let $A = \bigcup_{z} A_z \times  \{ z \}$.  Then for all $z$,  $\{ z' : z' \times z \in A \} = A_z \in F$ by fineness, and so $A \in F^2$. But for any $z \in [X]^{<\omega}$ and any $z' \supsetneq z$, $z \times z' \notin A$, so $\{ z' : z \times z' \in A \} \notin F$.  Thus switching the roles of horizontal and vertical cross-sections yields a different filter.

Suppose $K$ is a divisible Abelian group.  For functions $f : X \times Y \to K$, we can compute $\int f \, d(F \times G)$ as before.  But we can also compute in two steps.  For fixed $p \in Y$, we obtain a value in $\pow(K,F)$ by taking $\int f(x,p) \, dF$.  This gives a function from $Y$ to $\pow(K,F)$, which we denote by $\int f(x,y) \, dF$.  We can then compute $\int(\int f(x,y) \, dF) \, dG$.  To show that this yields the same result, let us establish a general fact about iterated reduced powers:

\begin{lemma}[Folklore]
Suppose $F,G$ are filters over sets $X,Y$ respectively.  
Let $\frak A$ be any algebraic structure.  
Then there is a canonical isomorphism 
$$\iota : \pow(\frak A,F \times G) \cong \pow(\pow(\frak A,F),G).$$
\end{lemma}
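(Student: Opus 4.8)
The plan is to write down $\iota$ explicitly on representatives and then check, by repeatedly unfolding the Fubini-style definition of the product filter $A \in F \times G \iff \{ y \in Y : \{ x \in X : (x,y) \in A \} \in F \} \in G$, that it is a well-defined bijection preserving all operations and relations. Given $h : X \times Y \to \frak A$, for each $y \in Y$ let $h_y : X \to \frak A$ be the section $h_y(x) = h(x,y)$, and define $\hat h : Y \to \pow(\frak A, F)$ by $\hat h(y) = [h_y]_F$. I would then set $\iota([h]_{F \times G}) := [\hat h]_G$.

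The single observation that drives everything is that the $y$-section of the agreement set $\{ (x,y) : h(x,y) = h'(x,y) \}$ is exactly $\{ x : h_y(x) = h'_y(x) \}$. Combined with the definition of $F \times G$, this yields the chain of equivalences
$$[h]_{F \times G} = [h']_{F \times G} \iff \{ y : h_y \equiv_F h'_y \} \in G \iff [\hat h]_G = [\hat{h'}]_G,$$
which establishes simultaneously that $\iota$ is well defined and that it is injective. For surjectivity, given $[\kappa]_G$ with $\kappa : Y \to \pow(\frak A, F)$, I would use choice to select for each $y$ a representative $g_y : X \to \frak A$ of the class $\kappa(y)$, and then put $h(x,y) = g_y(x)$; by construction $\hat h = \kappa$ as functions on $Y$, so $\iota([h]_{F \times G}) = [\kappa]_G$.

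To see that $\iota$ is a homomorphism I would push the same unfolding one layer deeper. For an $n$-ary operation symbol $\sigma$, the section $(\sigma \circ \vec h)_y$ is the map $x \mapsto \sigma^{\frak A}(h_{1,y}(x), \dots, h_{n,y}(x))$, whose $F$-class is by definition $\sigma^{\pow(\frak A, F)}(\hat h_1(y), \dots, \hat h_n(y))$; taking $G$-classes then gives $\iota(\sigma([h_1]_{F \times G}, \dots)) = \sigma(\iota[h_1]_{F \times G}, \dots)$. For a relation symbol $R$, interpreted in a reduced power by membership of its defining set in the filter (as is the order $<$ earlier in the paper), the two-step unfolding gives
$$R^{\pow(\frak A, F \times G)}([h_1]_{F \times G}, \dots) \iff \{ y : \{ x : R^{\frak A}(\vec h(x,y)) \} \in F \} \in G \iff R^{\pow(\pow(\frak A, F), G)}(\iota[h_1]_{F \times G}, \dots),$$
so $R$ is both preserved and reflected.

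I do not expect a genuine obstacle: the whole statement is bookkeeping about iterated quotients, and its mathematical content is precisely the Fubini property of the product filter. The two points that demand care are the appeal to choice in surjectivity (selecting one function $g_y$ out of each equivalence class $\kappa(y)$) and the need to fix, once and for all, the ``defining-set-in-the-filter'' convention for interpreting relation symbols in a reduced power, so that the relational computation runs in both directions and $\iota$ emerges as a genuine isomorphism of structures rather than merely an operation-preserving bijection.
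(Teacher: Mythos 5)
Your proof is correct and follows essentially the same route as the paper's: define $\iota$ on representatives by currying, $[h]_{F \times G} \mapsto [\,y \mapsto [h_y]_F\,]_G$, and verify via the Fubini-style definition of $F \times G$ that atomic formulas (hence all operations and relations) are preserved and reflected in both directions. You are somewhat more explicit than the paper about well-definedness, injectivity, and the appeal to choice in the surjectivity step, which the paper compresses into the remark that the objects of the two structures are in ``natural correspondence'' via coding functions on pairs.
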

\label{iterationisom}

\begin{proof}
First note that there is a natural correspondence between the objects of these structures, before we compute equivalence classes.  The elements of the iterated reduced power are represented by functions from $Y$ to functions from $X$ to $\frak A$, which are coded by functions on pairs.

Suppose $\varphi(v_0,\dots,v_n)$ is an atomic formula in the language of $\frak A$.  Let $f_0,\dots,f_n$ be functions from $X \times Y$ to $\frak A$.  Then:
\begin{align*}
&\pow(\frak A,F \times G) \models \varphi([f_0]_{F \times G},\dots,[f_n]_{F \times G})   \\
&\Longleftrightarrow\,  \{ (x,y) : \frak \models \varphi(f_0(x,y),\dots,f_n(x,y)) \} \in F \times G  \\
&\Longleftrightarrow\,   \{ y : \{ x : \frak A \models \varphi(f_0(x,y),\dots,f_n(x,y)) \} \in F \} \in G \\
&\Longleftrightarrow\,  \{ y : \pow(\frak A,F) \models \varphi([f_0(x,y)]_F,\dots,[f_n(x,y)]_F) \} \in G \\
&\Longleftrightarrow\,  \pow(\pow(\frak A,F),G) \models   \varphi([[f_0(x,y)]_F]_G,\dots,[[f_n(x,y)]_F]_G).  \\
\end{align*}
Thus we may define an isomorphism $\iota$ by $[f]_{F \times G} \mapsto [[f]_F]_G$.
\end{proof}

Because of the above fact, we will abuse notation slightly and write $a = b$ for $a \in \pow(\frak A,F \times G)$ and $b \in \pow(\pow(\frak A,F),G)$ when we really mean that $\iota(a) = b$, where $\iota$ is the canonical isomorphism above.

\begin{lemma}
Suppose $K$ is a divisible Abelian group, $F,G$ are fine filters over $[X]^{<\omega},[Y]^{<\omega}$ respectively. 
Then for all $f : X \times Y \to K$,
$$\int f \, d(F \times G) = \iint f \, dF dG.$$
\end{lemma}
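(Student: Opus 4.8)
The plan is to compute each side as an explicit representative function and then match them through the currying isomorphism $\iota : \pow(K,F\times G)\cong\pow(\pow(K,F),G)$ established above. First I would record the preliminaries that make both expressions meaningful. Since $K$ is a divisible Abelian group, so is $\pow(K,F)$ (division by a positive integer is performed componentwise), so the outer integral $\int(\int f(x,y)\,dF)\,dG$ is defined. Moreover, the collection $R$ of rectangles belongs to $F\times G$: for each $z_1$ the section $\{z_0 : z_0\times z_1\in R\}$ is all of $[X]^{<\omega}\in F$, whence $\{z_1 : \{z_0 : z_0\times z_1\in R\}\in F\}=[Y]^{<\omega}\in G$. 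Thus $\int f\,d(F\times G)$ is represented, up to $F\times G$-equivalence, by its restriction to rectangles $w=z_0\times z_1$, where $|w|=|z_0|\cdot|z_1|$; and since $F\times G$ restricted to $R\cong[X]^{<\omega}\times[Y]^{<\omega}$ is exactly the product filter of $F$ and $G$, the isomorphism $\iota$ applies with these as the two index sets.

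Next I would unwind the iterated integral. For a fixed $p\in Y$ we have $\int f(x,p)\,dF=[\,z_0\mapsto|z_0|^{-1}\sum_{x\in z_0}f(x,p)\,]_F$, and integrating over $G$ yields
$$\iint f\,dF\,dG=\Big[\,z_1\mapsto |z_1|^{-1}\sum_{p\in z_1}\Big[\,z_0\mapsto|z_0|^{-1}\sum_{x\in z_0}f(x,p)\,\Big]_F\,\Big]_G.$$
Because a finite sum of $F$-classes is the class of the pointwise sum, and division by the integer $|z_1|$ in $\pow(K,F)$ is again computed componentwise, the inner expression collapses to $[\,z_0\mapsto(|z_0||z_1|)^{-1}\sum_{p\in z_1}\sum_{x\in z_0}f(x,p)\,]_F$, using uniqueness of division in $K$ to merge $|z_1|^{-1}$ and $|z_0|^{-1}$. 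Hence $\iint f\,dF\,dG$ is represented, as an element of $\pow(\pow(K,F),G)$, by the function $z_1\mapsto[\,z_0\mapsto(|z_0||z_1|)^{-1}\sum_{x\in z_0}\sum_{p\in z_1}f(x,p)\,]_F$.

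Finally I would compare this with the left-hand side. On a rectangle $w=z_0\times z_1$ the approximating average of $\int f\,d(F\times G)$ is $(|z_0||z_1|)^{-1}\sum_{(x,y)\in z_0\times z_1}f(x,y)$, which by Fubini for finite sums equals $(|z_0||z_1|)^{-1}\sum_{x\in z_0}\sum_{y\in z_1}f(x,y)$. Viewing $\int f\,d(F\times G)$ as the class of the function $H(z_0,z_1)=(|z_0||z_1|)^{-1}\sum_{x\in z_0}\sum_{y\in z_1}f(x,y)$ on rectangles, the map $\iota$ sends $[H]_{F\times G}$ to $[[H]_F]_G$, i.e.\ it curries $H$ and takes the successive equivalence classes, producing exactly the representative of $\iint f\,dF\,dG$ found above. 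Under the identification afforded by $\iota$, the two sides therefore coincide. The step I expect to be the only real obstacle is the bookkeeping of where each division and each quotient is taken: confirming that restriction to rectangles is legitimate, that $|z_0\times z_1|=|z_0|\,|z_1|$ lets the two separate divisions $|z_0|^{-1}$ (in $K$) and $|z_1|^{-1}$ (in $\pow(K,F)$) combine into $(|z_0||z_1|)^{-1}$, and that $\iota$ acts on representatives precisely by currying, so that the finite-sum Fubini identity is what makes the representatives literally agree.
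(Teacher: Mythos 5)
Your proof is correct and follows essentially the same route as the paper's: both restrict to rectangles, apply the currying isomorphism $\iota$, and use the finite-sum Fubini identity together with $|z_0\times z_1|=|z_0|\,|z_1|$ and pointwise linearity to match representatives. The only difference is cosmetic — you additionally spell out why the set of rectangles lies in $F\times G$, which the paper leaves implicit.
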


\begin{proof}  Since $F \times G$ concentrates on the set of finite rectangles $z_0 \times z_1$,
$$\int f \, d(F \times G) = \left[ \sum_{(x,y) \in z_0 \times z_1} f(x,y)/|z_0 \times z_1| \right]_{F \times G} $$
The isomorphism $\iota$ maps this to:
\begin{align*}
 \left[ \left[ \sum_{(x,y) \in z_0 \times z_1} \frac{f(x,y)}{|z_0 || z_1|} \right]_F \right]_G =& \left[ \int \left(\sum_{y \in z_1} f(x,y)/|z_1|\right) dF \right]_G \\
 =& \left[ |z_1|^{-1} \sum_{y \in z_1} \int f(x,y) \, dF \right]_G \\
 =& \iint f(x,y) \, dFdG. \qedhere
 \end{align*}
 \end{proof}
 
The key reason we introduced the notion of a comparison ring is that it makes the theory of iterated filter integration more elegant. Since a reduced power of a comparison ring is also a comparison ring, general facts about integrating functions taking values in comparison rings apply to each step of an iterated integral. The remainder of this section is devoted to exploring some facts about standard parts in iterated filter integrals.

\begin{proposition}
\label{prodchar}
Suppose $F,G$ are fine filters over $[X]^{<\omega},[Y]^{<\omega}$ respectively.  For $A \subseteq X$ and $B \subseteq Y$,
$$\sqiint^+ \chi_{A\times B} \, dFdG= \left(\sqint^+ \chi_A \, dF\right)\left(\sqint^+ \chi_B \, dG\right);$$
$$\sqiint^- \chi_{A\times B} \, dFdG= \left(\sqint^- \chi_A \, dF\right)\left(\sqint^- \chi_B \, dG\right).$$
\end{proposition}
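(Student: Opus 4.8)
The plan is to rewrite the iterated integral as a single product and then compute its standard parts directly from Definition \ref{def upper and lower standard part}. Since $\chi_{A\times B}(x,y)=\chi_A(x)\chi_B(y)$, for each fixed $y$ the inner integral is $\int\chi_{A\times B}(x,y)\,dF=\chi_B(y)\alpha$, where $\alpha=\int\chi_A\,dF\in\pow(\Q,F)$. Carrying out the outer integral and writing $d(z_1)=|z_1\cap B|/|z_1|$, I obtain
$$\iint\chi_{A\times B}\,dFdG=[\,z_1\mapsto\alpha\, d(z_1)\,]_G=\alpha\cdot\beta',$$
where $\beta'=[\,z_1\mapsto d(z_1)\,]_G$ and $\alpha$ is viewed as a constant element of $\pow(\pow(\Q,F),G)$. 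Both factors satisfy $0\le\alpha,\beta'\le1$ by monotonicity of the integral, and one checks that $\st^{\pm}(\alpha)=\sqint^{\pm}\chi_A\,dF$ and $\st^{\pm}(\beta')=\sqint^{\pm}\chi_B\,dG$, since a $z_1$-independent inequality either holds everywhere or nowhere, and rationals retain their order under $\Q\hookrightarrow\pow(\Q,F)$. Thus the proposition reduces to the two identities $\st^+(\alpha\beta')=\st^+(\alpha)\,\st^+(\beta')$ and $\st^-(\alpha\beta')=\st^-(\alpha)\,\st^-(\beta')$.

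Write $a^+=\st^+(\alpha)$ and $b^+=\st^+(\beta')$, both in $[0,1]$. For $\st^+(\alpha\beta')\le a^+b^+$, fix rationals $p>a^+$ and $r>b^+$, so that $\alpha<p$ in $\pow(\Q,F)$ and $\{z_1:d(z_1)<r\}\in G$; using $0\le\alpha<p$, $0\le d(z_1)<r$, and the scaling law of Proposition \ref{Karith}, every such $z_1$ yields $\alpha\, d(z_1)<pr$, hence $\alpha\beta'<pr$ and $\st^+(\alpha\beta')\le pr$. Letting $p\downarrow a^+$ and $r\downarrow b^+$ gives the bound. For the reverse inequality, fix a rational $q$ with $0<q<a^+b^+$ (the degenerate case $a^+b^+=0$ follows from $\alpha\beta'\ge0$, which forces $\st^+(\alpha\beta')\ge0$). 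The crux is that whenever $\alpha\, d(z_1)<q$ with $d(z_1)>0$, invertibility of the positive rational $d(z_1)$ gives $\alpha<q/d(z_1)$, and by the definition of $a^+$ this forces the rational $q/d(z_1)\ge a^+$, i.e.\ $d(z_1)\le q/a^+$. Choosing a rational $r\in(q/a^+,b^+)$, the set witnessing $\alpha\beta'<q$ is therefore contained in $\{z_1:d(z_1)<r\}$, which is not in $G$ (otherwise $b^+\le r<b^+$); since $G$ is upward closed, the witnessing set is not in $G$, so $\alpha\beta'<q$ fails. Hence $\st^+(\alpha\beta')\ge a^+b^+$.

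The identity for $\st^-$ is proved by the mirror-image argument, interchanging the roles of $\inf$ and $\sup$ and of the strict inequalities, reducing any containment to a set of the form $\{z_1:d(z_1)>r\}$ that fails to lie in $G$. The main obstacle is precisely the reverse inequality $\st^+(\alpha\beta')\ge a^+b^+$ (and its $\st^-$ dual): submultiplicativity of $\st^+$ is a general feature (Lemma \ref{lemma properties of shsup, shinf}), but the matching lower bound is false for arbitrary factors and here relies on $\alpha$ being \emph{constant} in the $G$-direction, which is what lets the inequality $\alpha\, d(z_1)<q$ pin down $d(z_1)$ through the single value $a^+$. Because $G$ need not be an ultrafilter, one cannot decide memberships set-by-set; the argument instead sandwiches the relevant set inside some $\{z_1:d(z_1)<r\}\notin G$ and appeals to upward closure. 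Securing this containment, together with the boundary cases $a^+=0$ or $b^+=0$, is where the care lies.
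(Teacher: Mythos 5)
Your argument is correct, and its key step is genuinely different from the paper's. The easy half is the same in both: for rationals $p,q$ above the two upper integrals one gets $\iint \chi_{A\times B}\,dFdG < pq$ directly. For the hard half (the lower bound on the upper standard part, and its dual), the paper first constructs refinements $F_u \supseteq F$ and $G_u \supseteq G$ in which the upper integrals become genuine standard integrals --- this requires checking that $F$ has the finite intersection property with the sets $\{ z : \left||z\cap A|/|z| - r\right| < \varepsilon \}$ --- then computes the product integral there via Lemma \ref{finitestd}, and finally transfers back using the monotonicity of $\st^{\pm}$ under filter refinement. You instead stay inside $\pow(\pow(\Q,F),G)$ and work straight from the definition of the upper standard part: writing the iterated integral as $\alpha\beta'$ with $\alpha$ constant in the $G$-direction, you show that for rational $0 < q < \st^+(\alpha)\st^+(\beta')$ the witnessing set $\{ z_1 : \alpha\, d(z_1) < q \}$ is contained in some $\{ z_1 : d(z_1) < r \} \notin G$, so upward closure rules out $\alpha\beta' < q$. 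Both proofs ultimately rest on the same fact --- no set in the filter can force the finite averages strictly below the upper standard part --- but yours dispenses with the auxiliary filters and with Lemma \ref{finitestd}, exploiting instead that the inner factor is $G$-constant so that a single rational $\st^+(\alpha)$ pins down $d(z_1)$. The paper's refinement device is less self-contained here but reusable elsewhere (it realizes $\sqint^{\pm}$ as standard integrals of larger filters). Your treatment of the degenerate cases ($\st^+(\alpha)\st^+(\beta') = 0$ and $d(z_1) = 0$) is sound, and the mirrored argument for the lower standard parts goes through as you indicate, with the empty witnessing set handling the case $\st^-(\alpha) = 0$.
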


\begin{proof}
First we claim that there are filters $F_u,F_\ell \supseteq F$ and $G_u,G_\ell \supseteq F$ such that
\begin{itemize}
\item $\sqint \chi_A \, dF_u =  \sqint^+ \chi_A \, dF$;
\item $\sqint \chi_A \, dF_\ell =  \sqint^- \chi_A \, dF$;
\item $\sqint \chi_B \, dG_u =  \sqint^+ \chi_B \, dG$;
\item $\sqint \chi_B \, dG_\ell =  \sqint^- \chi_B \, dG$.
\end{itemize}
Let us show the first point; the others are similar.  Let $r = \sqint^+ \chi_A \, dF$.
We claim that for all $D \in F$ and all $\e>0$, there exists $z \in D$ such that $|z \cap A|/|z| > r - \e$.
Otherwise, there is $D \in F$ and $\e > 0$ such that for all $z \in D$, $|z \cap A|/|z| \leq r - \e$, which would mean that $ \sqint^+ \chi_A \, dF < r$, a contradiction.
It follows that $F$ together with the sets $\{ z \in [X]^{<\omega} : \left| |z \cap A|/|z| - r \right| < \e \}$, for $\e > 0$, generates a filter $F_u$, and $\sqint \chi_A \, dF_u = r$. 
 
Note that $\chi_{A \times B}(x,y)= \chi_A(x)\chi_B(y)$.  By linearity, $\iint \chi_{A \times B}(x,y) \, dF_u dG_u = \int(\chi_B(y) \int \chi_A(x) \, dF_u) \, dG_u$.  
By Lemma \ref{finitestd}, $\int \chi_A \, dF_u = \sqint \chi_A \, dF_u + \e$,
where $\e$ is an infinitesimal of $\pow(\R,F_u)$.
Thus, 
$$\iint \chi_{A \times B}(x,y) \, dF_u dG_u = \left[ z \mapsto  \left(  \sqint \chi_A \, dF_u + \e \right) \frac{|z \cap B|}{|z|} \right]_{G_u}$$
If $\sqint \chi_A \, dF_u = 0$, then this is an infinitesimal of $\pow(\pow(\R,F_u),G_u)$, and so $\sqiint \chi_{A\times B} \, dF_u dG_u= 0$.  Suppose then that $\sqint \chi_A \, dF_u \not= 0$. 
For any real $\delta>0$, there is $D \in G_u$ such that $| \frac{|z \cap B|}{|z|} - \sqint \chi_B \, dG_u | < \delta$ for $z \in D$.  Since $\e$ is infinitesimal, it follows that for $z \in D$,
$$-\delta \sqint \chi_A \, dF_u< \left(  \sqint \chi_A \, dF_u + \e \right) \frac{|z \cap B|}{|z|} - \sqint \chi_A \, dF_u \sqint \chi_B \, dG_u < \delta \sqint \chi_A \, dF_u.$$
Hence, 
$\sqiint \chi_{A\times B} \, dF_u dG_u= (\sqint^+ \chi_A \, dF)(\sqint^+ \chi_B \, dG).$
This shows that   
$$\left(\sqint^+ \chi_A \, dF\right)\left(\sqint^+ \chi_B \, dG\right) \leq \sqiint^+ \chi_{A\times B} \, dF dG.$$
It remains to show the reverse inequality.  
Let $p,q$ be rational numbers such that $p > \sqint^+ \chi_A \, dF$ and $q > \sqint^+ \chi_B \, dG$.
Then
$$\iint \chi_{A \times B} \, dF dG = \int \left( \int \chi_A \, dF \right) \chi_B \, dG < \int p \chi_B \, dG < pq.$$
Thus $\inf \{ s \in \Q : s > \iint \chi_{A \times B} \, dF dG \} =  (\sqint^+ \chi_A \, dF)(\sqint^+ \chi_B \, dG).$  The argument for the lower integrals is entirely analogous.
\end{proof}

If $f : X \times Y \to Z$ is a function, let $\bar f : Y \times X \to Z$ be defined by $\bar f(y,x) = f(x,y)$.  Our iterated integrals on products of two spaces are defined to integrate in the leftmost variable first, and this operation $f \mapsto \bar f$ allows us to consider switching the order of integration in line with our conventions.  

\begin{theorem}
\label{fubini}
Suppose $\mu,\nu$ are countably additive probability measures on $X,Y$ respectively.
Then for all $\mu\times\nu$-integrable functions $f : X \times Y \to \mathbb R$, there are sets $A \subseteq X$ and $B\subseteq Y$ such that $\mu(A) = \nu(B) = 1$, and
$$\int f \, d(\mu\times\nu) = \sqint_{A \times B} f \, d(F_\mu \times F_\nu) = \sqint_{B \times A} \bar f \, d(F_\nu \times F_\mu).$$
\end{theorem}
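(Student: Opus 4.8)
The plan is to reduce the product-filter integral to an iterated one and then mirror the classical Fubini--Tonelli computation one variable at a time, converting each inner filter integral into the corresponding measure integral by the representation theorem (Theorem \ref{probext}). First I would invoke classical Fubini for $f$ (and for $|f|$) to manufacture the sets: take $A=\{x: y\mapsto f(x,y)\text{ is }\nu\text{-integrable}\}$ and $B=\{y: x\mapsto f(x,y)\text{ is }\mu\text{-integrable}\}$, chosen measurable, so that $\mu(A)=\nu(B)=1$, the function $h(y)=\int f(x,y)\,d\mu(x)$ is defined and $\nu$-integrable on $B$, and $\int h\,d\nu=\int f\,d(\mu\times\nu)$. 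Reading $\sqint_{A\times B}$ as $\st\big(\E_{F_\mu\times F_\nu}(f\mid A\times B)\big)$, note that since $\mu(A)=\nu(B)=1$ we have $\sqint^{\pm}\chi_A\,dF_\mu=\sqint^{\pm}\chi_B\,dF_\nu=1$, so Proposition \ref{prodchar} gives $\Pr_{F_\mu\times F_\nu}(A\times B)\approx 1$. Hence the normalizing denominator is harmless and it suffices to compute $\st\big(\int f\chi_{A\times B}\,d(F_\mu\times F_\nu)\big)$.

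Next I would apply the isomorphism $\pow(\R,F_\mu\times F_\nu)\cong\pow(\pow(\R,F_\mu),F_\nu)$ (Lemma \ref{iterationisom}) together with the identity $\int g\,d(F_\mu\times F_\nu)=\iint g\,dF_\mu dF_\nu$ to write
\[
\int f\chi_{A\times B}\,d(F_\mu\times F_\nu)=\int_y \chi_B(y)\Big(\int_x f(x,y)\chi_A(x)\,dF_\mu\Big)\,dF_\nu.
\]
For fixed $y\in B$ the slice $x\mapsto f(x,y)\chi_A(x)$ is $\mu$-integrable with $\int f(x,y)\chi_A(x)\,d\mu=h(y)$, using $\mu(A)=1$. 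When this slice is bounded, Theorem \ref{probext} gives $\st\big(\int f(x,y)\chi_A\,dF_\mu\big)=h(y)$, so by Lemma \ref{finitestd} the inner integral equals $h(y)+\eta(y)$ for some infinitesimal $\eta(y)\in\pow(\R,F_\mu)$; for $y\notin B$ the factor $\chi_B(y)$ annihilates the term.

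I would then feed this back into the outer integral. Setting $H(y)=\chi_B(y)h(y)$, a bounded $\nu$-measurable real function, and $E(y)=\chi_B(y)\eta(y)$, which is infinitesimal in $\pow(\R,F_\mu)$ for every $y$, linearity splits the expression as $\int H\,dF_\nu+\int E\,dF_\nu$. By Lemma \ref{finiteslice} the second summand is infinitesimal, while Theorem \ref{probext} applied to $\nu$ yields $\st(\int H\,dF_\nu)=\int H\,d\nu=\int_B h\,d\nu=\int f\,d(\mu\times\nu)$. Taking standard parts gives the first equality, and dividing by $\Pr(A\times B)\approx 1$ leaves it unchanged; the second equality $\int f\,d(\mu\times\nu)=\sqint_{B\times A}\bar f\,d(F_\nu\times F_\mu)$ follows by the identical computation with the two factors (and the roles of $A,B$) interchanged, integrating the other slice first.

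The main obstacle is the boundedness hypothesis in the representation theorem: Theorem \ref{probext} converts a filter integral into a measure integral only for bounded measurable functions, whereas a merely $\mu$-integrable unbounded slice $f(\cdot,y)$ has an infinite filter integral. The crux is therefore to bridge from bounded to integrable, and the natural route is to truncate $f$ at height $N$, run the argument above for the bounded functions $f_N$, and pass to the limit, controlling the measure side by dominated convergence. The delicate point is that the standard-part operator does not commute with the filter integral in general (Lemma \ref{lemma properties of shsup, shinf} supplies only inequalities), so one must arrange the full-measure sets $A,B$ so that the infinitesimal discrepancies $\eta(y)$ and the truncation errors are absorbed simultaneously; verifying this uniformity, together with the a.e.\ measurability of slices needed to apply Fubini, is where the real work lies.
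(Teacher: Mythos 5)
Your outline reproduces the paper's argument step for step: classical Fubini supplies the full-measure sets $A$ and $B$; the iterated-reduced-power isomorphism turns the product-filter integral into an outer $F_\nu$-integral of the inner $F_\mu$-integrals; each inner integral is identified with the measure integral of the slice; Lemma \ref{finiteslice} absorbs the slice-wise infinitesimal discrepancies; and the outer representation theorem finishes. (One cosmetic difference: in the paper $\sqint_{A\times B}$ abbreviates $\sqint \chi_{A\times B}\,f$ rather than a normalized conditional expectation, but since $\Pr(A\times B)\approx 1$ your reading yields the same standard part.)

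The divergence, and the gap, is in how unbounded integrands are handled. The paper does not truncate: it applies the identity $\sqint g \, dF_\mu = \int g \, d\mu$ directly to $\mu$-integrable, possibly unbounded, slices (and likewise to the outer function $y \mapsto \chi_B(y)\int_A f(x,y)\,d\mu$, which is integrable but in general not bounded --- note your $H$ is also only integrable, so the issue recurs at the outer level). That identity fails for the minimal filter of Theorem \ref{probext} but holds by construction for the filter of Theorem \ref{ext}, which is generated by the sets $A_{g,\e}$ for \emph{every} integrable $g$; for a probability measure without point masses the partition and weights there are trivial, so this is an ordinary fine filter on $[X]^{<\omega}$. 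Read with that filter, the proof closes with no truncation and no uniformity argument. Your truncation plan, by contrast, cannot be completed for the literal Theorem \ref{probext} filter: if a slice $g$ is unbounded on every full-measure set (e.g.\ $x^{-1/2}$ on $(0,1)$), then every set in that filter contains finite samples $z$ whose average of $g$ is arbitrarily large --- append to a large $z$ a single point where $g$ exceeds $N|z|$; this perturbs each of the finitely many proportion constraints by at most $1/(|z|+1)$ --- so $\sqint^+ g \, dF_\mu = \infty$ and the standard integral you hope to reach as $\lim_N \sqint f_N$ simply does not exist. The truncated integrals converge on the measure side, but there is nothing for them to converge to on the filter side. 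The obstacle you identified is therefore genuine, but the resolution is to enlarge the filter so that the representation already covers all integrable functions, not to approximate from below.
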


\begin{proof}
Let $f : X \times Y \to \mathbb R$ be $\mu\times\nu$-integrable.  By Fubini's Theorem, we have:

\begin{enumerate}[(a)]
\item There are sets $A \subseteq X$ and $B\subseteq Y$ such that $\mu(A) = \nu(B) = 1$, and for all $(x_0,y_0)  \in A\times B$, $f(x,y_0)$ is $\mu$-integrable and $f(x_0,y)$ is $\nu$-integrable.
\item The functions $x \mapsto \int f(x,y) \, d\nu$ and $y \mapsto \int f(x,y) \, d\mu$ are integrable.
\item $\int f \, d(\mu\times\nu) = \iint f \, d\mu d\nu =  \iint \bar f \, d\nu d\mu = \int \bar f \, d(\nu\times\mu)$.
\end{enumerate}

Since $y \mapsto \int_A f(x,y) \, d\mu$ is $\nu$-integrable,
$$\int f \, d(\mu \times \nu) = \int \left(\chi_B(y) \int_A f(x,y) \, d\mu\right) d\nu = \sqint \left(\chi_B(y) \int_A f(x,y) \, d\mu\right) dF_\nu.$$
For all $y \in Y$ such that $x \mapsto f(x,y)$ is $\mu$-integrable, we have:
$$\sqint_A f(x,y) \, dF_\mu := \sqint \chi_A(x) f(x,y) \, dF_\mu= \int \chi_A(x) f(x,y) \, d\mu.$$
By Lemma \ref{finiteslice}, 
$$\sqint \left(\chi_B(y) \sqint_A f(x,y) \, dF_\mu \right)  dF_\nu =\sqint \left(\chi_B(y) \int_A f(x,y) \, dF_\mu \right)  dF_\nu.$$
Putting this together, we have the desired conclusion that 
\begin{align*}
\int f \, d(\mu\times\nu) &= \int_B\left(\int_A f(x,y) \, d\mu\right) d\nu = \sqint_B \left( \int_A f(x,y) \, d\mu \right)  dF_\nu \\
&= \sqint_B \left( \sqint_A f(x,y) \, dF_\mu \right) dF_\nu =  \sqint_B \left( \int_A f(x,y) \, dF_\mu \right) dF_\nu \\
&=\sqiint_{A \times B} f \, dF_\mu dF_\nu = \sqint_{A \times B} f \, d(F_\mu \times F_\nu).
\end{align*} 
By exactly the same argument, $\int \bar f \, d(\nu\times\mu) = \sqint_{B \times A} \bar f \, d(F_\nu \times F_\mu)$. 
\end{proof}

Unfortunately, the restriction to measure-one sets $A$ and $B$ in the above result cannot in general be avoided.  To see this, consider the function $f$ on the open unit square defined by
$$f(x,y) = \begin{cases}
1/x &\text{if } $y = 1/2$ \\
0 &\text{otherwise}
\end{cases}
$$
Since $f$ is nonzero only a set of Lebesgue measure zero, its Lebesgue integral is zero.  Suppose $F = F_\lambda$, where $\lambda$ is the Lebesgue measure on $(0,1)$.
Then $\int f(x,1/2) \, dF$ is a positive infinite number $a \in \pow(\mathbb R,F)$.  For all $z \in [(0,1)]^{<\omega}$ with $1/2 \in z$, $|z|^{-1} \sum_{y\in z} \int f(x,y) \, dF = a/|z|$, which is still infinite.  Thus $\iint f \, dF^2$ is infinite.  On the other hand, for all $y \in (0,1)$, $|z|^{-1} \sum_{x\in z} \bar f(x,y) \leq 1/y|z|$.  Thus for all $y \in (0,1)$, $\int \bar f(x,y) \, dF$ is infinitesimal, and thus so is $\iint \bar f \, dF^2$.

\begin{proposition}
\label{topfinite}
Suppose $n$ is a natural number and for $i<n$, $\tau_i$ is a compact topology on $X_i$ and $F_i$ is a fine filter over $[X_i]^{<\omega}$ such that all $\tau_i$-continuous functions into $\R$ have a standard $F_i$-integral. Then any $(\prod_{i<n} \tau_i)$-continuous function from $\prod_{i<n} X_i$ to $\R$ has a finite standard $(\prod_{i<n} F_i)$-integral.
\end{proposition}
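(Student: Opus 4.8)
The plan is to induct on $n$, peeling off the first coordinate at each step and using the iteration isomorphism to express an $n$-fold integral as a single integral over $X_0$ followed by an $(n-1)$-fold integral over $Y = \prod_{1\le i<n} X_i$. Throughout I interpret $\prod_{i<n} F_i$ as the right-nested binary product $F_0 \times \left(\prod_{1\le i<n} F_i\right)$ and likewise for the topologies, so that (by Lemma~\ref{iterationisom} and the identity $\int f\, d(F\times G) = \iint f\, dF\, dG$) integrating against $\prod_{i<n} F_i$ integrates the first, leftmost variable first; note also that a finite product of fine filters is fine, as the binary case shows. The base case $n=1$ is immediate: a $\tau_0$-continuous $f$ on the compact $X_0$ is bounded, say $|f|\le M$, so by monotonicity of the integral $-M \le \int f\, dF_0 \le M$; the standard integral exists by hypothesis and lies in $[-M,M]$, hence is finite.

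For the inductive step, fix a $(\prod_{i<n}\tau_i)$-continuous $f : X_0 \times Y \to \R$, bounded by some $M$. For each $\vec y \in Y$ the section $f(\cdot,\vec y)$ is $\tau_0$-continuous, so by hypothesis it has a standard $F_0$-integral, which by the bound is a finite real; set $g(\vec y) = \sqint f(\cdot,\vec y)\, dF_0$. The crucial step is that $g : Y \to \R$ is continuous. Fix $\vec y_0$ and $\e>0$ and consider $N = \{(x,\vec y) : |f(x,\vec y) - f(x,\vec y_0)| < \e\}$, which is open (the preimage of $(-\e,\e)$ under a continuous map) and contains the slice $X_0 \times \{\vec y_0\}$. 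Since $X_0$ is compact, the tube lemma gives an open neighborhood $V$ of $\vec y_0$ with $X_0 \times V \subseteq N$. Then for $\vec y \in V$ we have $f(\cdot,\vec y_0)-\e \le f(\cdot,\vec y) \le f(\cdot,\vec y_0)+\e$ pointwise on $X_0$, so by monotonicity and $\int c\, dF_0 = c$ the integral $\int f(\cdot,\vec y)\, dF_0$ lies within $\e$ of $\int f(\cdot,\vec y_0)\, dF_0$; passing to standard parts gives $|g(\vec y)-g(\vec y_0)| \le \e$. Hence $g$ is continuous.

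Now $g$ is a continuous real-valued function on the compact product $Y$, so the induction hypothesis, applied to the $n-1$ spaces $X_1,\dots,X_{n-1}$ (whose hypotheses are inherited), shows that $\int g\, d(\prod_{1\le i<n}F_i)$ has a finite standard part $r \in \R$; by Lemma~\ref{finitestd} this means $\int g\, d(\prod_{1\le i<n}F_i) \sim r$ in $\pow(\R,\prod_{1\le i<n}F_i)$. Let $G(\vec y) = \int f(\cdot,\vec y)\, dF_0 \in \pow(\R,F_0)$; by Lemma~\ref{finitestd}, $G(\vec y) \sim g(\vec y)$ for every $\vec y$. Viewing both $G$ and $g$ as functions into the comparison ring $K = \pow(\R,F_0)$ (Lemma~\ref{comparisonpower}) and applying Lemma~\ref{finiteslice} with the fine filter $\prod_{1\le i<n}F_i$ over $[Y]^{<\omega}$ yields $\int G\, d(\prod_{1\le i<n}F_i) \sim \int g\, d(\prod_{1\le i<n}F_i)$. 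Here the left-hand side equals $\int\left(\int f(\cdot,\vec y)\, dF_0\right) d(\prod_{1\le i<n}F_i) = \int f\, d(\prod_{i<n}F_i)$ by the iterated-integral identity. Chaining the two $\sim$-relations, which transfer to the ambient structure $\pow(\R,\prod_{i<n}F_i) \cong \pow(\pow(\R,F_0),\prod_{1\le i<n}F_i)$ because the canonical embeddings are order- and ring-homomorphisms, gives $\int f\, d(\prod_{i<n}F_i) \sim r$, so by Lemma~\ref{finitestd} the function $f$ has finite standard $(\prod_{i<n}F_i)$-integral equal to $r$.

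The main obstacle is the continuity of the partially-integrated function $g$; this is where compactness enters essentially, via the tube lemma, and it is the one genuinely topological step (compactness is otherwise used only to bound $f$ and thereby secure finiteness). A secondary point requiring care is the bookkeeping of $\sim$ across the nested reduced powers: one must confirm that the infinitesimal-closeness relations established separately in $\pow(\R,F_0)$ and in $\pow(\R,\prod_{1\le i<n}F_i)$ transfer correctly to $\pow(\pow(\R,F_0),\prod_{1\le i<n}F_i)$, which holds because the embeddings preserve the order, the constant $1$, and integer multiples, and hence preserve closeness to rationals.
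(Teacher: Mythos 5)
Your proof is correct and follows essentially the same route as the paper's: reduce to peeling off one coordinate, show the standard-part function $\vec y \mapsto \sqint f(\cdot,\vec y)\,dF_0$ is continuous via compactness, apply the hypothesis on the remaining factor(s), and use Lemma \ref{finiteslice} to transfer between the real-valued function $g$ and the $\pow(\R,F_0)$-valued function $G$. The only cosmetic difference is that you establish continuity of $g$ via the tube lemma on $X_0$, while the paper uses a finite cover of the whole product by rectangles on which $f$ varies by less than $\e$; both are routine compactness arguments and yield the same estimate.
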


\begin{proof}
It suffices to prove the claim for $n = 2$, since the general case then follows by induction. 
Since $\tau_0 \times \tau_1$ is compact, every continuous $f : X_0 \times X_1 \to \R$ is bounded.  For each $y \in X_1$, $x \mapsto f(x,y)$ is a $\tau_0$-continuous function on $X_0$.  Thus $\sqint f(x,y) \, dF_0$ exists and is finite for each $y \in X_1$.  

We claim that the function $y \mapsto \sqint f(x,y) \, dF_0$ is a $\tau_1$-continuous function on $X_1$.  
Let $y \in X_1$ and $\e>0$ be given.  Let $r = \sqint f(x,y) \, dF_0$.  By the compactness of $\tau_0 \times \tau_1$, there is a finite collection of open rectangles $\{ A_i \times B_i : i < m \}$ such that if $(a_0,b_0),(a_1,b_1) \in A_i \times B_i$, then $| f(a_0,b_0) - f(a_1,b_1) | < \e$.  Let $B = \bigcap \{ B_i : y \in B_i \}$.  Then for all $y' \in B$ and all $x \in X_0$, $|f(x,y') - f(x,y)| < \e$.  It follows that $-\e < \int (f(x,y') -  f(x,y)) \, dF_0 < \e$.  Thus $y' \in B$ implies $|\sqint f(x,y') \, dF_0 - r| < \e$.

By hypothesis, $\int( \sqint f(x,y) \, dF_0) dF_1$ has a standard part.  It is finite since $f$ is bounded.  Applying Lemma \ref{finiteslice}, we get
$$\sqint \left( \sqint f(x,y) \, dF_0 \right) dF_1 = \sqint \left( \int  \ f(x,y) \, dF_0 \right) dF_1 = \sqint f \, d(F_0 \times F_1). \qedhere$$
\end{proof}

\section{Transfinite integrals}
\label{transfinite}
Let $(L,<)$ be a linear order, and let $\la (Z_i,F_i) : i \in L \ra$ be such that each $F_i$ is a filter over $Z_i$.  For a finite $a \subseteq L$, we interpret the product filter $\prod_{i \in a} F_i$ as taken in the order given by $(L,<)$.
\begin{lemma}
Suppose $a \subseteq b$ are finite nonempty subsets of $L$.  Let $\pi_{b,a} : \prod_{i \in b} Z_i \to \prod_{i \in a} Z_i$ be the canonical projection.  Then $A \in \prod_{i \in a} F_i$ if and only if $\pi_{b,a}^{-1}[A] \in \prod_{i \in b} F_i$.
\end{lemma}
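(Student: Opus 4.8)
The plan is to reduce membership in a product filter to a nested string of ``filter quantifiers'' and then observe that the coordinates introduced by $\pi_{b,a}^{-1}$ contribute only vacuous quantifiers. For a filter $F$ over a set $Z$ and a predicate $P$ on $Z$, write $(\forall^F z)\,P(z)$ to abbreviate $\{z \in Z : P(z)\} \in F$. Unwinding the recursive definition of the iterated product $\prod_{i \in a} F_i$ (which is well-defined up to the canonical isomorphism of the folklore lemma, so that the bracketing is irrelevant), one proves by induction on $|a|$ that for $a = \{i_1 < \dots < i_k\}$ and $A \subseteq \prod_{i \in a} Z_i$,
\[
A \in \prod_{i \in a} F_i \iff (\forall^{F_{i_k}} x_{i_k})(\forall^{F_{i_{k-1}}} x_{i_{k-1}}) \cdots (\forall^{F_{i_1}} x_{i_1})\,\big[(x_{i_1},\dots,x_{i_k}) \in A\big],
\]
where the quantifiers are nested in decreasing order of index, the largest index being outermost. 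This matches the convention that in $F \times G$ the second filter $G$ governs the outer cross-section.

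The single fact I would isolate is the \emph{vacuous-quantifier} principle: if a predicate $Q$ does not depend on the variable $z$, then $(\forall^F z)\,Q \iff Q$. Indeed $\{z : Q\}$ is then either all of $Z$ or $\emptyset$; since $Z \in F$ and, crucially, $\emptyset \notin F$ because $F$ is proper, the quantifier merely reproduces the truth value of $Q$. This is the only place the standing hypothesis that each $F_i$ is proper (equivalently, each $Z_i \neq \emptyset$) is used, and it is exactly what makes the equivalence an ``if and only if'' rather than a one-directional implication.

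Now I would apply the quantifier form to the set $\pi_{b,a}^{-1}[A] \subseteq \prod_{i \in b} Z_i$. Writing $b = \{j_1 < \dots < j_m\}$, its membership in $\prod_{i \in b} F_i$ is the nested string $(\forall^{F_{j_m}} x_{j_m}) \cdots (\forall^{F_{j_1}} x_{j_1})$ applied to the matrix $[(x_i)_{i \in a} \in A]$, which depends only on the coordinates indexed by $a$. Evaluating the quantifiers from the innermost (smallest index) outward, I maintain the invariant that the running predicate depends only on the $a$-coordinates among the not-yet-quantified variables. Whenever the quantifier index lies in $b \setminus a$, its variable is absent from the running predicate, so by the vacuous-quantifier principle it drops out without changing the truth value; whenever the index lies in $a$, the quantifier is genuine and is retained. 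After removing exactly the $|b \setminus a|$ superfluous quantifiers, what remains is the nested string over $a$ with matrix $[(x_i)_{i\in a}\in A]$, which by the quantifier form is precisely the assertion $A \in \prod_{i \in a} F_i$. This yields both directions at once.

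The main obstacle, and where I would spend the most care, is setting up the quantifier reformulation rigorously: establishing associativity of the product operation (so that $\prod_{i \in a} F_i$ is unambiguous and the nesting order is correct), and formulating the invariant so that each dropped quantifier is provably vacuous even though it may sit in the middle of the string — which is handled by always evaluating from the inside out. An alternative, more elementary route avoids the general quantifier lemma: induct on $|b \setminus a|$, reduce to adding a single index $j$, and split into two cases according to whether $j$ is the largest element of $b$ (evaluate the outermost cross-section directly, using properness) or not (peel off the genuine largest index, which then lies in $a$, and apply the inductive hypothesis to the cross-sections).
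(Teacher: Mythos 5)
Your proposal is correct and is essentially the paper's own argument in different clothing: the paper inducts on initial segments of $b$, peeling the new (outermost) quantifier when its index lies in $a$ and observing that $\pi_{b_{j},a_{j}}^{-1}[A]\times Z_{\xi_j}$ belongs to the product filter iff $\pi_{b_{j},a_{j}}^{-1}[A]$ does when it lies in $b\setminus a$ --- which is exactly your vacuous-quantifier principle, resting on properness in the same way. Your explicit isolation of where $\emptyset\notin F_i$ is used is a nice touch, but the decomposition into ``genuine'' versus ``vacuous'' coordinates and the inside-out induction coincide with the paper's proof.
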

\begin{proof}
Write $b$ in $L$-increasing order as $\xi_0 < \dots < \xi_{n-1}$.  Let $i_0 = \min\{i : \xi_i \in a \}$.  For $j \leq n$ and $x = a,b$, let $x_j = x \cap \{ \xi_0,\dots,\xi_{j-1}\}$.  We will show by induction that the conclusion holds with respect to $\pi_{b_j,a_j}$ for $i_0 < j \leq n$.  

Assume the claim holds for $i < j$.  Suppose first that $\xi_j \in a$.  Then:
\begin{align*}
A \in \prod_{i \in a_{j+1}} F_i	
	&\Longleftrightarrow \{ y : \{ \vec x : \vec x ^\frown \la y \ra \in A \} \in \prod_{i \in a_j} F_i \} \in F_{\xi_j} \\
	&\Longleftrightarrow \{ y : \{ \vec z : \pi_{b_j,a_j}(\vec z) ^\frown \la y \ra \in A \} \in \prod_{i \in b_j} F_i \} \in F_{\xi_j} \\
	&\Longleftrightarrow \{ \vec z : \pi_{b_{j+1},a_{j+1}}(\vec z) \in A \} \in \prod_{i \in b_{j+1}} F_i
\end{align*}
Suppose next that $\xi_j \notin a$.  Then by induction,
$$A \in \prod_{i \in a_{j+1}} F_i \Longleftrightarrow A \in \prod_{i \in a_{j}} F_i  \Longleftrightarrow \pi_{b_j,a_j}^{-1}[A] \in \prod_{i \in b_j} F_i$$
But $\pi_{b_{j+1},a_{j+1}}^{-1}[A] = \pi_{b_j,a_j}^{-1}[A] \times Z_{\xi_j}$, which is in $\prod_{i \in b_{j+1}} F_i$ if and only if $\pi_{b_j,a_j}^{-1}[A] \in \prod_{i \in b_j} F_i$.
\end{proof}

Now let $\frak A$ be any structure.  For finite $a \subseteq b$ contained in $L$, define a map $e_{a,b} : \pow(\frak A,\prod_{i \in a} F_i) \to \pow(\frak A,\prod_{i \in b} F_i)$ by $[f]_{\prod_{i \in a} F_i} \mapsto [f \circ \pi_{b,a}]_{\prod_{i \in b} F_i}$.  
The above lemma implies that each $e_{a,b}$ is an embedding.  
If $a \subseteq b \subseteq c$ are finite subsets of $L$, then $\pi_{c,a} = \pi_{b,a} \circ \pi_{c,b}$, and thus $e_{a,c} = e_{b,c} \circ e_{a,b}$.  
We define the direct limit of this system as the collection of equivalence classes of pairs $(a,x)$, where $a \subseteq L$ is finite and $x \in \pow(\frak A,\prod_{i \in a} F_i)$, with the equivalence relation $(a,x) \sim (b,y)$ holding when $e_{a,a \cup b}(x) = e_{b,a \cup b}(y)$.  We interpret the relation and function symbols in the language of $\frak A$ according to their interpretations in the finite iterated reduced powers, which is coherent because of the commuting system of embeddings.  Call this structure $\pow(\frak A,\vec F)$.  We have that for any atomic formula $\varphi(v_0,\dots,v_n)$ and any objects $[(a_0,x_0)],\dots,[(a_n,x_n)]$, if $b = a_0 \cup \dots \cup a_n$, then
$$\pow(\frak A,\vec F) \models  \varphi([(a_0,x_0)],\dots,[(a_n,x_n)])$$
$$\Longleftrightarrow \pow(\frak A,\prod_{i \in b} F_i ) \models \varphi(e_{a_0,b}(x_0),\dots,e_{a_n,b}(x_n))$$
$$\Longleftrightarrow \pow(\frak A,\prod_{i \in c} F_i ) \models \varphi(e_{a_0,c}(x_0),\dots,e_{a_n,c}(x_n)),$$
where $c \subseteq L$ is an arbitrary finite superset of $b$.  For finite $a \subseteq L$, let $e_a : \pow(\frak A,\prod_{i \in a} F_i) \to \pow(\frak A,\vec F)$ be the canonical embedding $x \mapsto [(a,x)]$.

\begin{lemma}
Suppose $K$ is a comparison ring, $L$ is a linear order, and $\la F_i : i \in L \ra$ is a sequence of filters.  Then $\pow(K,\vec F)$ is a comparison ring.  Furthermore, if $a \subseteq L$ is finite and $x \in \pow(K,\prod_{i \in a} F_i)$ has standard part $r$, then $\st(e_a(x)) = r$.
\end{lemma}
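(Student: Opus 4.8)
The plan is to exhibit $\pow(K,\vec F)$ as the direct limit of the directed system $\la \pow(K,\prod_{i \in a} F_i),\, e_{a,b} \ra$ indexed by the finite subsets of $L$ ordered by inclusion, and to run every verification at a common finite stage. Since $\prod_{i \in a} F_i$ is a filter over the set $\prod_{i \in a} Z_i$, Lemma \ref{comparisonpower} applies verbatim to show that each finite stage $\pow(K,\prod_{i \in a} F_i)$ is already a comparison ring. The index set $[L]^{<\omega}$ is directed under inclusion, so any finitely many elements of the limit descend to a common stage $b$; and the reflection property recorded just before the statement—that for atomic $\varphi$ one has $\pow(K,\vec F) \models \varphi(e_b(x_0),\dots,e_b(x_n))$ iff $\pow(K,\prod_{i\in b}F_i) \models \varphi(x_0,\dots,x_n)$—shows that each $e_b$ preserves and reflects all quantifier-free formulas. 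This is the engine driving both parts.

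To check that the limit is a comparison ring I would verify the four axioms of the definition. Axioms (\ref{strictpo})--(\ref{multineq}), together with the forward implication of axiom (\ref{inversesquare}) written as the universal sentence $\forall a\,\forall b\,(ab=1 \to a^2>0)$, are all universal with quantifier-free matrix; such a sentence, true at every stage, passes to the direct limit, since any witnessing tuple descends to a common stage where the sentence holds and the matrix is then reflected upward. The one clause needing care is the reverse implication of (\ref{inversesquare}), which is genuinely $\Pi_2$: given $a \in \pow(K,\vec F)$ with $a^2 > 0$, I would write $a = e_b(x)$, observe that $e_b(x^2) = e_b(x)^2 > 0$ reflects to $x^2 > 0$ at stage $b$, use that the stage is a comparison ring to obtain an inverse $y$ of $x$ there, and push it forward via $e_b(x)e_b(y) = e_b(xy) = e_b(1) = 1$. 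I expect this existential clause to be the only real obstacle, and it dissolves once one notes that inverses can always be manufactured at a finite stage and transported by the ring embedding.

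For the furthermore clause, the crux is that each $e_a$ is a ring embedding and hence fixes the canonical copy of $\Q$ supplied by Proposition \ref{Karith}(\ref{Q}): as $\Q$ is generated from $0$ and $1$ by the ring operations and inverses, $e_a(q) = q$ for every $q \in \Q$. Because $e_a$ both preserves and reflects the atomic relation $<$, for each rational $q$ we have $e_a(x) < q \Leftrightarrow e_a(x) < e_a(q) \Leftrightarrow x < q$, and symmetrically $e_a(x) > q \Leftrightarrow x > q$. Hence the sets $\{ q \in \Q : e_a(x) < q \}$ and $\{ q \in \Q : x < q \}$ coincide, as do their counterparts defined with $>$. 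Taking infima and suprema over $\Q$ then gives $\shsup{(e_a(x))} = \shsup{x} = r$ and $\shinf{(e_a(x))} = \shinf{x} = r$, so by Definition \ref{def upper and lower standard part} the element $e_a(x)$ has standard part $r$. No step here is delicate: the whole content is that an order-and-ring embedding fixing $\Q$ cannot alter which rationals bound an element, and therefore cannot alter its upper and lower standard parts.
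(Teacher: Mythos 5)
Your proposal is correct and follows essentially the same route as the paper's own (sketched) proof: each finite stage is a comparison ring by Lemma \ref{comparisonpower}, the universal axioms pass to the direct limit because the embeddings preserve and reflect quantifier-free formulas, the existential half of the inverse axiom is handled by manufacturing the inverse at a finite stage and transporting it (the paper phrases this as inverses being unique solutions of atomic equations), and the standard-part claim follows because $e_a$ fixes $\Q$ and preserves the order relations against rationals. You have merely filled in the details the paper leaves implicit; no new ideas or gaps.
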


\begin{proof}[Proof (sketch)]
Lemma \ref{comparisonpower} implies that for each finite $a \subseteq L$, $\pow(K,\prod_{i \in a} F_i)$ is a comparison ring.  We need only to check that the axioms are preserved under embeddings.  For those that can be written $\Pi_1$-formulas, this is immediate.  Beyond this, the key is just that additive and multiplicative inverses are the unique solutions to equations like $a + x = 0$ and $ax= 1$, which are themselves atomic formulas.

For the second claim, note that for any $q_0,q_1 \in \Q$, the formulas ``$q_0 < x$'' and ``$x< q_1$'' are preserved by the embedding $e_a$.
\end{proof}

If $\vec X = \la X_i : i \in I \ra$ is a sequence of sets and $f : \prod \vec X \to Y$ is a function, let us say that $f$ is \emph{finitely dependent} when there is a finite $s \subseteq I$ such that whenever $\vec x,\vec y \in \prod\vec X$ are such that $\vec x \restriction s = \vec y \restriction s$, then $f(\vec x) = f(\vec y)$.  If $s_0,s_1$ both witness that $f$ is finitely dependent, then so does $s= s_0 \cap s_1$.  For suppose $\vec x \restriction s = \vec y \restriction s$, and put $\vec z = \vec x \restriction s_0 \cup \vec y \restriction (I \setminus s_0)$.  Then $f(\vec x) = f(\vec z) = f(\vec y)$.  Thus if $s_0,s_1$ are $\subseteq$-minimal witnesses to the finite dependency of $f$, then $s_0 = s_1$.  Thus let us define $\dep(f)$ as the smallest $s$ witnessing that $f$ is finitely dependent. $f$ is constant if and only if $\dep(f) = \emptyset$.  If $f$ is finitely dependent, then it canonically determines a function $f'$ on $\prod_{i \in J} X_i$, whenever $\dep(f) \subseteq J \subseteq I$, by putting $f'(\vec x) = f(\vec x \cup \vec y)$, where $\vec y \in \prod_{i \in I \setminus J}X_i$ is arbitrary.  We will abuse notation slightly and denote such $f'$ also by $f$.


\begin{proposition}
\label{welldef}
Suppose $L$ is a linear order, $G$ is a divisible Abelian group, $\la X_i : i \in L \ra$ is a sequence of sets, and for each $i \in L$, $F_i$ is a fine filter over $[X_i]^{<\omega}$.
Suppose $f : \prod\vec X \to G$ is finitely dependent and $a \supseteq \dep(f)$ is a finite subset of $L$.  Then
$ \int f \, d(\prod_{i \in a} F_i) =  e_{\dep(f),a}\left( \int f \, d(\prod_{i \in \dep(f)} F_i) \right)$.
\end{proposition}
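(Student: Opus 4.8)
The plan is to prove the identity by unwinding both sides into explicit representing functions on the finite rectangles and showing that these functions agree on a set belonging to $\prod_{i \in a} F_i$. Since each $F_i$ is fine, the product filter $\prod_{i \in a} F_i$ concentrates on the finite rectangles $\vec z = (z_i)_{i \in a} \in \prod_{i \in a} [X_i]^{<\omega}$ (this is the two-factor observation of \S\ref{products} iterated finitely many times), so it is enough to compare the two sides on such rectangles. On the left, $\int f \, d(\prod_{i \in a} F_i)$ is represented by $\vec z \mapsto \big(\prod_{i \in a} |z_i|\big)^{-1} \sum_{\vec x \in \prod_{i \in a} z_i} f(\vec x)$. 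On the right, $\int f \, d(\prod_{i \in \dep(f)} F_i)$ is represented by the analogous average over $\prod_{i \in \dep(f)} z_i$, and applying the embedding $e_{\dep(f),a}$ precomposes this with the projection $\pi_{a,\dep(f)}$ that deletes the coordinates in $a \setminus \dep(f)$; thus the right-hand side is represented by $\vec z \mapsto \big(\prod_{i \in \dep(f)} |z_i|\big)^{-1} \sum_{\vec x \in \prod_{i \in \dep(f)} z_i} f(\vec x)$.

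The heart of the argument is then a single combinatorial identity powered by the finite dependency of $f$. Writing $d = \dep(f)$, every $\vec x \in \prod_{i \in a} z_i$ has $f(\vec x) = f(\vec x \restriction d)$, so grouping the sum over $\prod_{i \in a} z_i$ by the $d$-coordinates shows that each value $f(\vec x_d)$, for $\vec x_d \in \prod_{i \in d} z_i$, is counted exactly $\prod_{i \in a \setminus d} |z_i|$ times. Combined with the factorization $\prod_{i \in a} |z_i| = \big(\prod_{i \in d} |z_i|\big)\big(\prod_{i \in a \setminus d} |z_i|\big)$, this makes the left-hand representing function equal to the right-hand one on every rectangle all of whose coordinates are nonempty.

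Finally, I would observe that fineness of each $F_i$ gives $\{ z_i \in [X_i]^{<\omega} : z_i \neq \emptyset \} \in F_i$, so the collection of rectangles with all coordinates nonempty is a product of filter-large sets and hence lies in $\prod_{i \in a} F_i$; this is also exactly where the division by $\prod_{i \in a} |z_i|$ is legitimate. Since the two representing functions agree on this filter-large set, they determine the same element of $\pow(G, \prod_{i \in a} F_i)$, giving the claimed equality. I do not expect a serious obstacle here: the only delicate point is keeping the rectangle representation of $\prod_{i \in a} F_i$ and the definition of $e_{\dep(f),a}$ aligned so that the counting identity lands precisely on the projected average, rather than any genuine difficulty in the mathematics.
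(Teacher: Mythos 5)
Your proof is correct and follows essentially the same route as the paper's: both represent each side by the averaging function on finite rectangles and use the counting identity that each value $f(\vec x \restriction \dep(f))$ is repeated $\prod_{i \in a \setminus \dep(f)} |z_i|$ times, so that the representing function for $a$ factors through the projection $\pi_{a,\dep(f)}$. Your explicit remark that fineness guarantees the rectangles with all coordinates nonempty form a filter-large set (justifying the division) is a small point the paper leaves implicit, but the argument is the same.
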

\begin{proof}
For any finite $s \subseteq L$, $\prod_{i \in s} F_i$ can be regarded as a fine filter over $[\prod_{i \in s} X_i]^{<\omega}$ concentrating on the finite rectangles $\prod_{i \in s} z_i \subseteq \prod_{i \in s} X_i$.
For $s \supseteq \dep(f)$, let $g_s : [\prod_{i \in s} X_i]^{<\omega} \to G$ be defined by $g_s(z) = 0$ if $z$ is not a rectangle, and otherwise:
$$g_s\left(\prod_{i \in s} z_i\right) = \frac{1}{\prod_{i \in s} |z_i|} \sum_{\vec x \in \prod_{i \in s} z_i} f(\vec x)$$
Let $a \setminus \dep(f) = \{ i_0,\dots,i_{n} \}$.  In the expression above for $g_a$, for each $\vec y \in \prod_{i \in \dep(f)} z_i$, $f(\vec y)$ is repeated $|z_{i_0}| \cdots |z_{i_n} |$-many times and then divided by the same number.  So, for each rectangle $\prod_{i \in a} z_i$, $g_a(\prod_{i \in a} z_i) = g_{\dep(f)}( \prod_{i \in \dep(f)} z_i)$.  In other words, $g_a = g_{\dep(f)} \circ \pi_{a,\dep(f)}$.  Thus:
\begin{align*}
\int f \, d(\prod_{i \in a} F_i)	&= [g_a]_{\prod_{i \in a} F_i} = [g_{\dep(f)} \circ \pi_{a,\dep(f)}]_{\prod_{i \in a} F_i} \\
						&= e_{\dep(f),a}\left([g_{\dep(f)}]_{\prod_{i \in \dep(f)} F_i}\right) \\
						&= e_{\dep(f),a}\left( \int f \, d(\prod_{i \in \dep(f)} F_i) \right) \qedhere
\end{align*}
\end{proof}

Suppose $f : \prod\vec X \to G$ is finitely dependent. We define:
$$\int f \, d\vec F := e_{a}\left( \int f \, d(\prod_{i \in a} \vec F_i) \right) \in \pow(G,\vec F),$$
where $a$ is any finite superset of $\dep(f)$.  By the previous proposition, this is well-defined.

\begin{proposition}
Suppose $L$ is a linear order, $R$ is a ring, $\la X_i : i \in L \ra$ is a sequence of sets, and for each $i \in L$, $F_i$ is a fine filter over $[X_i]^{<\omega}$.
Any algebraic operation between finitely dependent functions on $\prod \vec X$ yields a finitely dependent function.
If $f,g : \prod \vec X \to R$ are finitely dependent and $r,s\in R$, then $\int (rf+sg) \, d\vec U = r\int f \, d\vec U + s\int g \, d\vec U$. (where we identify elements of $R$ with constant functions taking those values).
\end{proposition}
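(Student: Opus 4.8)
The first assertion is immediate from the definition of $\dep$. If $f,g : \prod \vec X \to R$ are finitely dependent and $\ast$ denotes any of the ring operations applied pointwise, I claim that $\dep(f \ast g) \subseteq \dep(f) \cup \dep(g)$. Indeed, put $s = \dep(f) \cup \dep(g)$, which is finite, and suppose $\vec x \restriction s = \vec y \restriction s$. Then $\vec x \restriction \dep(f) = \vec y \restriction \dep(f)$ and $\vec x \restriction \dep(g) = \vec y \restriction \dep(g)$, so $f(\vec x) = f(\vec y)$ and $g(\vec x) = g(\vec y)$, whence $(f \ast g)(\vec x) = (f \ast g)(\vec y)$. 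The same reasoning handles scalar multiplication by a fixed $r \in R$, since constants have empty dependency. Thus $s$ witnesses finite dependency of $f \ast g$, and in particular $rf + sg$ is finitely dependent.

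For the linearity claim, the plan is to reduce everything to a single finite iterated reduced power and invoke the linearity already available there. First I would fix a finite $a \subseteq L$ with $a \supseteq \dep(f) \cup \dep(g)$; by the first part this $a$ also contains $\dep(rf+sg)$, so $a$ is a legitimate index of computation for all three functions at once. By Proposition \ref{welldef} and the definition of $\int \cdot \, d\vec F$, this gives
$$\int h \, d\vec F = e_a\!\left( \int h \, d(\prod_{i \in a} F_i) \right) \quad \text{for each } h \in \{ f, g, rf+sg \},$$
where the inner integral is the ordinary filter integral of Definition \ref{def filter integral} with respect to the fine filter $\prod_{i \in a} F_i$ over $[\prod_{i \in a} X_i]^{<\omega}$, each $h$ being viewed as a function on $\prod_{i \in a} X_i$ via finite dependency. (This identification is exactly the one used in the proof of Proposition \ref{welldef}, where the averaging function $g_a$ is the integrand of the basic filter integral on the rectangles on which $\prod_{i \in a} F_i$ concentrates.)

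Next I would apply the linearity of the basic filter integral for ring-valued functions, as observed immediately after Definition \ref{def filter integral}, to the fine filter $\prod_{i \in a} F_i$, obtaining
$$\int (rf+sg) \, d(\prod_{i \in a} F_i) = r \int f \, d(\prod_{i \in a} F_i) + s \int g \, d(\prod_{i \in a} F_i).$$
Applying $e_a$ to both sides finishes the argument: since $e_a$ is an embedding of algebraic structures — preserving all atomic formulas, as the $e_{a,b}$ do and as the direct-limit interpretation is defined to respect — it is in particular a ring homomorphism carrying the constants $r,s$ to the corresponding constants of $\pow(R,\vec F)$, so it commutes with the linear combination; combined with the displayed formulas for $\int h \, d\vec F$ this yields $\int (rf+sg) \, d\vec F = r \int f \, d\vec F + s \int g \, d\vec F$. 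There is no serious obstacle here, the content being essentially bookkeeping; the only points requiring care are to compute all three integrals over a common finite index set $a$, so that the single linearity statement for $\prod_{i \in a} F_i$ applies, and to note that $e_a$ respects the ring operations, including multiplication by the scalars $r,s$ identified with constant functions.
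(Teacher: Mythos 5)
Your proposal is correct and follows essentially the same route as the paper: both arguments establish finite dependency by observing that the union of the dependency sets witnesses it, then compute all integrals over a common finite index set, invoke the linearity of the basic filter integral there, and push the identity into the direct limit using the fact that the canonical map $e_a$ is an embedding. Your write-up is just a more detailed version of the paper's two-line proof.
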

\begin{proof}For the first claim, just note that if the operation involves finitely many functions, then coordinates outside the union of their dependency sets have no influence.  For the second claim, let $a = \dep(f)$, let $b = \dep(g)$, and let $c = a \cup b$.  
Since $\int (rf+sg) \, d(\prod_{i \in c} F_i) = r\int f \, d(\prod_{i \in c} F_i) + s\int g \, d(\prod_{i \in c} F_i)$, 
the conclusion follows by the fact that $e_c$ is an embedding.
\end{proof}

Now we wish to extend the integrals $\int f \, d\vec F$ to give a value to \emph{all} functions on $\prod \vec X$ taking values in a divisible Abelian group $G$, not just the finitely dependent ones.
Choose a filter $H$ over $[L]^{<\omega} \times \prod\vec X$ which is fine in the sense that for every $i \in L$, $\{ (s,\vec x) : i \in s \} \in H$.  For each $f : \prod \vec X \to G$, each $s \in [L]^{<\omega}$, and each $\vec y \in  \prod \vec X$, we define a finitely dependent function:
$$f_{s,\vec y}(\vec x) = f( \vec x \restriction s \cup \vec y \restriction (L \setminus s))$$
We define the following operator on functions $f : \prod \vec X \to G$:
$$\int f \, d(\vec F,H) = \left[ (s,\vec y) \mapsto \int f_{s,\vec y} \, d\vec F \right]_H$$
Note that this operation enjoys the usual linearity properties.
If $f$ is finitely dependent, then for all $s \supseteq \dep(f)$ and all $\vec x,\vec y \in \prod \vec X$, $f(\vec x) = f_{s,\vec y}(\vec x)$.  Thus if $e : \pow(G,\vec F) \to \pow(\pow(G,\vec F),H)$ is the canonical embedding, then $e(\int f \, d\vec F) = \int f \,  d(\vec F,H)$.  


Let us say that a function $f : \prod \vec X \to \mathbb R$ is \emph{uniformly continuous} if for all $n \in \mathbb N$, there is a finite $s \subseteq L$ such that $\vec x \restriction s = \vec y \restriction s$ implies $| f(\vec x) - f(\vec y) |< 1/n$.
The next result shows that the standard integral of uniformly continuous functions depends only on the sequence of filters $\vec F$.

\begin{lemma}
\label{unifcont}
Suppose $\vec X,\vec F$ are $L$-sequences of sets and filters as above.
Suppose $f : \prod \vec X \to \mathbb R$ is uniformly continuous, and for $(s,\vec y) \in [L]^{<\omega} \times \prod\vec X$, $\int f_{s,\vec y} \, d(\prod_{i \in s} F_i)$ has a  standard part.
Then there is an $r \in \mathbb R \cup \{\pm\infty\}$ such that for all fine filters $H$ over $[L]^{<\omega} \times \prod \vec X$,
$\sqint f \, d(\vec F,H) = r$.
\end{lemma}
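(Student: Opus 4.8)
For each $(s,\vec y)\in[L]^{<\omega}\times\prod\vec X$ set $r_{s,\vec y}:=\st\big(\int f_{s,\vec y}\,d\vec F\big)$. By hypothesis the standard part $\st\big(\int f_{s,\vec y}\,d(\prod_{i\in s}F_i)\big)$ exists, and since $\int f_{s,\vec y}\,d\vec F=e_a\big(\int f_{s,\vec y}\,d(\prod_{i\in a}F_i)\big)$ for any finite $a\supseteq\dep(f_{s,\vec y})$ (Proposition \ref{welldef}), the lemma that the embeddings $e_a$ preserve standard parts guarantees $r_{s,\vec y}$ is well-defined in $\R\cup\{\pm\infty\}$. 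Writing $g(s,\vec y)=\int f_{s,\vec y}\,d\vec F\in\pow(\R,\vec F)$, so that $\int f\,d(\vec F,H)=[g]_H$, the whole statement reduces to two claims: (i) the values $r_{s,\vec y}$ converge to a single $r\in\R\cup\{\pm\infty\}$ as $s$ grows, \emph{uniformly in the tail parameter $\vec y$}; and (ii) $\st([g]_H)=r$ for every fine $H$. The filter $H$ will enter only through its fineness, which is what forces $r$ to be independent of $H$.

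The engine is a Cauchy estimate extracted from uniform continuity. Given $n$, pick a finite $t_n\subseteq L$ witnessing $1/n$-uniform continuity; replacing $t_n$ by $t_1\cup\dots\cup t_n$ we may take these increasing. For $s,s'\supseteq t_n$ and \emph{any} $\vec y,\vec y'$, the sequences $\vec x\restriction s\cup\vec y\restriction(L\setminus s)$ and $\vec x\restriction s'\cup\vec y'\restriction(L\setminus s')$ agree on $t_n$, so $|f_{s,\vec y}(\vec x)-f_{s',\vec y'}(\vec x)|<1/n$ for all $\vec x$. Thus $h:=f_{s,\vec y}-f_{s',\vec y'}$ is finitely dependent with $|h|\le 1/n$; computing over any finite $a\supseteq s\cup s'$, every finite average of $h$ lies in $[-1/n,1/n]$, whence $\st^-\big(\int h\,d\vec F\big)\ge -1/n$ and $\st^+\big(\int h\,d\vec F\big)\le 1/n$. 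Now apply linearity of $\int\cdot\,d\vec F$: from $\int f_{s,\vec y}\,d\vec F=\int f_{s',\vec y'}\,d\vec F+\int h\,d\vec F$ and the bounds on $\int h\,d\vec F$, finiteness of one standard part forces finiteness of the other with $|r_{s,\vec y}-r_{s',\vec y'}|\le 1/n$, while an infinite value forces the same infinite value on the other (since adding a boundedly-small term cannot change ``larger than every rational''). This yields a trichotomy for $s\supseteq t_1$: all $r_{s,\vec y}=+\infty$, or all $=-\infty$, or all finite and forming a Cauchy net converging to some $r\in\R$ with $|r_{s,\vec y}-r|\le 1/n$ whenever $s\supseteq t_n$.

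For (ii), note that fineness of $H$ gives $\{(s,\vec y):t_n\subseteq s\}=\bigcap_{i\in t_n}\{(s,\vec y):i\in s\}\in H$. In the finite case, for rational $q>r$ choose $n$ with $r+1/n<q$; then for $s\supseteq t_n$ we have $\st(g(s,\vec y))=r_{s,\vec y}\le r+1/n<q$, so $g(s,\vec y)<q$ in $\pow(\R,\vec F)$ (as $\st^+<q$ implies the element is $<q$). Since this holds on a set in $H$, $[g]_H<q$, giving $\st^+([g]_H)\le r$; symmetrically, for rational $q<r$ one gets $[g]_H>q$, so $\st^-([g]_H)\ge r$, and hence $\st([g]_H)=r$. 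In the infinite cases, for every rational $q$ one has $g(s,\vec y)>q$ (resp.\ $<q$) on the set $\{t_1\subseteq s\}\in H$, so $\st([g]_H)=\pm\infty=r$. As $H$ was used only via fineness, the common value $r$ works for all fine $H$.

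\textbf{Main obstacle.} The delicate point is the possibility of infinite standard parts: the subadditivity of $\st^{\pm}$ in Lemma \ref{lemma properties of shsup, shinf} is stated only for finite elements, so I cannot invoke it to propagate the Cauchy bound. Instead the propagation of finiteness, the matching of signs at infinity, and the estimate $|r_{s,\vec y}-r_{s',\vec y'}|\le 1/n$ must all be read off directly from the additive decomposition $\int f_{s,\vec y}=\int f_{s',\vec y'}+\int h$ together with $\st^{\pm}\big(\int h\,d\vec F\big)\in[-1/n,1/n]$. The second thing to verify carefully is that the uniform-continuity estimate is genuinely uniform over the tail coordinates $\vec y,\vec y'$, since that uniformity is precisely what decouples the limit $r$ from the choice of $H$.
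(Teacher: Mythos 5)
Your proposal is correct and follows essentially the same route as the paper: extract increasing finite sets $s_n$ from uniform continuity, deduce that $\int f_{t_0,\vec y_0}\,d\vec F$ and $\int f_{t_1,\vec y_1}\,d\vec F$ differ by less than $1/n$ whenever $t_0,t_1\supseteq s_n$ (working in a common finite product and pushing through the embedding), split into the all-$\pm\infty$ case versus a nested family of real sets of diameter $\le 1/n$ pinning down $r$, and finally use only the fineness of $H$ to conclude $\sqint f\,d(\vec F,H)=r$. Your extra care about not invoking the subadditivity of $\st^{\pm}$ at infinite values is a reasonable refinement of the same argument, not a different approach.
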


\begin{proof}
For $n \in \mathbb N$, let $s_n \in [L]^{<\omega}$ be such that  $| f(\vec x) - f(\vec y) | < 1/n$ whenever $\vec x \restriction s_n = \vec y \restriction s_n$.
Thus for all finite $t_0,t_1\supseteq s_n$ and all $\vec x,\vec y_0,\vec y_1 \in \prod \vec X$,
$$| f_{t_0,\vec y_0}(\vec x) - f_{t_1,\vec y_1}(\vec x) | < 1/n.$$
For $t = t_0 \cup t_1$, we have that
$$-1/n < \int f_{t_0,\vec y_0} \, d(\prod_{i \in t}F_i) - \int f_{t_1,\vec y_1} \, d(\prod_{i \in t}F_i) < 1/n,$$
and it follows by the fact that $e_t$ is an embedding that 
$$-1/n< \int f_{t_0,\vec y_0} \, d\vec U - \int f_{t_1,\vec y_1} \, d\vec U < 1/n.$$
If $\sqint f_{s,\vec y} \, d\vec F = \pm\infty$ for some $s,\vec y$, then $\sqint f_{t,\vec z} \, d\vec F = \pm\infty$ for all $t \supseteq s \cup s_1$ and all $\vec z$.  In this case, let $r = \pm\infty$ accordingly.  Otherwise, for each $n \in \mathbb N$,
$$B_n = \left\{  \sqint f_{t,\vec y} \, d\vec F : t \supseteq s_n \text{ and } \vec y \in \prod \vec X \right\}$$
is a subset of $\mathbb R$ of diameter $\leq 1/n$, and $B_{n+1} \subseteq B_n$.  There is a unique $r \in \mathbb R$ such that for all $n$, $\inf B_n \leq r \leq \sup B_n$.

Now let $H$ be a fine filter as hypothesized.  If $r$ is finite, then for each $t \supseteq s_n$ and each $\vec y \in \prod\vec X$,
$-1/n < \int f_{t,\vec y} \, d\vec F - r  < 1/n$.
Thus $\sqint f \, d(\vec F,H) = r$.  This also holds if $r$ is infinite by the remarks above.
\end{proof}

\begin{theorem}
\label{compactproduct}
Suppose $\vec X,\vec F$ are $L$-sequences of sets and filters as above.  Suppose for each $\alpha \in L$, $X_\alpha$ carries a compact topology $\tau_\alpha$, such that every $\tau_\alpha$-continuous function has a standard $F_\alpha$-integral.
Let $\tau$ be the product topology on $\prod \vec X$.  Then for every $\tau$-continuous $f : \prod \vec X \to \mathbb R$, there is a real $r$ such that $\sqint f \, d(\vec F,H) = r$ for every choice of $H$.
\end{theorem}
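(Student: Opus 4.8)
The plan is to deduce the theorem directly from Lemma \ref{unifcont}, whose two hypotheses I will verify in turn: that $f$ is uniformly continuous in the sense of that lemma, and that every finite-support slice $f_{s,\vec y}$ has a \emph{finite} standard $(\prod_{i \in s} F_i)$-integral. Once both hold, Lemma \ref{unifcont} produces an $r \in \mathbb R \cup \{\pm\infty\}$ with $\sqint f \, d(\vec F,H) = r$ independently of $H$, and the finiteness I establish will force $r \in \mathbb R$, which is exactly the conclusion.

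First I would prove uniform continuity. By Tychonoff's Theorem, $\prod \vec X$ is $\tau$-compact. Given $n \in \mathbb N$, continuity of $f$ assigns to each $\vec x \in \prod \vec X$ a basic open neighborhood $U_{\vec x}$, depending on a finite coordinate set $s_{\vec x} \subseteq L$, with $|f(\vec z) - f(\vec x)| < 1/(2n)$ for all $\vec z \in U_{\vec x}$. By compactness, finitely many neighborhoods $U_{\vec x_1},\dots,U_{\vec x_k}$ cover $\prod \vec X$; set $s_n = \bigcup_{j \leq k} s_{\vec x_j}$, which is finite. If $\vec x \restriction s_n = \vec y \restriction s_n$, choose $j$ with $\vec x \in U_{\vec x_j}$; since $U_{\vec x_j}$ depends only on coordinates in $s_{\vec x_j} \subseteq s_n$, we also get $\vec y \in U_{\vec x_j}$, whence $|f(\vec x) - f(\vec y)| \leq |f(\vec x) - f(\vec x_j)| + |f(\vec x_j) - f(\vec y)| < 1/n$. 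This is precisely the uniform continuity demanded by Lemma \ref{unifcont}.

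Next I would verify the integrability hypothesis. Fix a finite $s \subseteq L$ and $\vec y \in \prod \vec X$. The slice inclusion $\prod_{i \in s} X_i \to \prod \vec X$ sending $\vec x$ to $\vec x \cup \vec y \restriction (L \setminus s)$ is continuous for the product topologies, so composing with $f$ shows that the finitely-dependent function $f_{s,\vec y}$, regarded as a function on $\prod_{i \in s} X_i$, is $(\prod_{i \in s} \tau_i)$-continuous. Since $s$ is finite, $\prod_{i \in s} X_i$ is a finite product of compact spaces, and each factor satisfies the hypothesis that continuous functions have a standard $F_i$-integral. Proposition \ref{topfinite} then applies and yields that $\int f_{s,\vec y} \, d(\prod_{i \in s} F_i)$ has a \emph{finite} standard part. (The degenerate case $s = \emptyset$, where $f_{s,\vec y}$ is the constant $f(\vec y)$, is trivial.)

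With both hypotheses confirmed, Lemma \ref{unifcont} delivers a single $r \in \mathbb R \cup \{\pm\infty\}$ with $\sqint f \, d(\vec F,H) = r$ for every fine $H$; and because each slice integral above is finite, the nested sets $B_n$ constructed in that lemma are bounded, so the value $r$ lies in $\mathbb R$. I expect the uniform-continuity step to be the main obstacle, since that is where compactness of the entire product—hence Tychonoff's Theorem, and thus the Axiom of Choice—is genuinely needed to convert the topological continuity of $f$ into the combinatorial ``determined by finitely many coordinates'' condition required by Lemma \ref{unifcont}; the remainder is an essentially mechanical invocation of Proposition \ref{topfinite}.
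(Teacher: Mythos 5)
Your proposal is correct and follows essentially the same route as the paper's proof: Tychonoff plus a finite subcover by basic open sets to establish uniform continuity, Proposition \ref{topfinite} applied to the slices $f_{s,\vec y}$ to verify the integrability hypothesis, and then Lemma \ref{unifcont} together with boundedness of $f$ to get a finite $r$ independent of $H$. The only difference is cosmetic (you take basic neighborhoods of points directly rather than covering the preimages of intervals), so there is nothing to add.
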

\begin{proof}
By Tychonoff's Theorem, the space $(\prod\vec X,\tau)$ is compact.  Let $f : \prod \vec X \to \R$ be $\tau$-continuous. 
For any $n \in \mathbb N$ and $\vec x\in\prod\vec X$, the inverse image of $(f(\vec x)-1/2n,f(\vec x)+1/2n)$ is open.
By compactness, there is a finite set $\{ \vec x_0,\dots,\vec x_n \} \subseteq \prod\vec X$ and, for each $i \leq n$, a finite collection of basic open sets $\{ A^{i,0},\dots,A^{i,{m_i}} \}$ such that 
$\prod\vec X = \bigcup_{i \leq n, j \leq m_i} A^{i,j}$,
and whenever $y \in A^{i,j}$, then $|f(\vec x_i) - f(\vec y)| < 1/2n$.  
For each $i \leq n$ and $j \leq m_i$, there is a finite $s^{i,j} \subseteq L$ such that $A^{i,j} = \prod_{\alpha\in L} B^{i,j}_\alpha$, where $B^{i,j}_\alpha \in \tau_\alpha$ for $\alpha \in s^{i,j}$, and otherwise $B^{i,j}_\alpha = X_\alpha$.  
Let $s = \bigcup_{i \leq n, j \leq m_i}  s^{i,j}$.  For $\vec y,\vec z \in \prod\vec X$, if $\vec y \restriction s = \vec z \restriction s$, then there are $i,j$ such that $\vec y,\vec z \in A^{i,j}$.  Thus $|f(\vec y) - f(\vec z)| < 1/n$, and $f$ is uniformly continuous.

Now suppose $(s,\vec y) \in [L]^{<\omega}\times \prod\vec X$.  Then $f_{s,\vec y}$ is a continuous function on $(\prod_{i \in s} X_i, \prod_{i \in s} \tau_i)$.  By Proposition \ref{topfinite}, $\sqint f_{s, \vec y} \, dF_i$ exists and is finite.
Lemma \ref{unifcont} implies that there is an $r$ such that $\sqint f \, d(\vec F,H) = r$ for every choice of $H$.  Since $f$ is bounded, $r$ is finite.
\end{proof}

As an application, we give a representation of the Lebesgue integral on the Cantor space $2^{\mathbb N}$ that is more ``inevitable'' than the representations of \S \ref{reps}.  In our context, let $L = \mathbb N$ with the usual ordering, and for each $i \in \mathbb N$, let $X_i = \{ 0,1 \}$ with the discrete topology.  Let $F_i$ be the unique fine filter over $\p(X_i)$, i.e.\ $A \in F_i$ if and only if $\{0,1\} \in A$.  For each $n \in \mathbb N$, integrals using $F_0 \times \dots \times F_{n-1}$ are the same as computing expected values with the uniform probability measure on a space with $2^n$ elements, or in other words, just finding the arithmetic average value of the function over all points.  Thus if $f : 2^{\mathbb N} \to \mathbb R$ is finitely dependent, then $\int f \, d\vec F = \int f \, d\lambda$, where $\lambda$ is the Lebesgue measure on $2^{\mathbb N}$.

By compactness, every continuous $f : 2^{\mathbb N} \to \mathbb R$ is uniformly continuous.  Thus for every $n \in \mathbb N$, there is a finitely dependent $g : 2^{\mathbb N} \to \mathbb R$ such that $|f(\vec x) - g(\vec x)| < 1/n$ for all $\vec x$.  It follows that $|\int f \, d\lambda - \int g \, d\lambda| < 1/n$.  Also, for every choice of the filter $H$, $-1/n< \int f\, d(\vec F,H) - \int g \, d(\vec F,H) < 1/n$.  Since $\int g \, d(\vec F,H) = \int g\, d\lambda$ and $n$ is arbitrary, $\sqint f \, d(\vec F,H) = \int f \, d\lambda$ for every choice of $H$.  

\begin{theorem}
\label{lebesgue}
Let $\lambda$ be the Lebesgue measure on the Cantor space $2^{\mathbb N}$.
For each $i \in \mathbb N$, let $X_i = 2$ and let $F_i$ be the unique fine filter over $\p(\{0,1\})$.
There is a definable fine filter $H$ over $[\mathbb N]^{<\omega} \times 2^{\mathbb N}$ such that for every Lebesgue-integrable function $f : 2^{\mathbb N} \to \mathbb R$, $\sqint f \, d(\vec F,H) = \int f \, d\lambda$.  $H$ is minimal among filters with this property.
\end{theorem}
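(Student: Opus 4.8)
The plan is to mimic the construction of $F_\mu$ in Theorem \ref{probext}: I will define $H$ by a generating family consisting of the fineness sets together with sets forcing the integral of each Lebesgue-integrable function to its correct value, and then verify the finite intersection property, the integration property, and minimality. Concretely, for Lebesgue-integrable $f : 2^{\mathbb N} \to \R$ and $\e>0$, write $r_f = \int f \, d\lambda$ and set
$$A_{f,\e} = \left\{ (s,\vec y) \in [\mathbb N]^{<\omega} \times 2^{\mathbb N} : \left| \int f_{s,\vec y} \, d\vec F - r_f \right| < \e \right\},$$
where, since $f_{s,\vec y}$ is finitely dependent, $\int f_{s,\vec y}\,d\vec F$ is (the image under $e_s$ of) the real number $2^{-|s|}\sum_{\sigma \in 2^s} f(\sigma \cup \vec y\restriction(\mathbb N\setminus s))$, the average of $f$ over the fiber above $\vec y$ in the coordinates of $s$. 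I let $H$ be generated by closing the $A_{f,\e}$ together with the fineness sets $\{(s,\vec y) : i \in s\}$ (for $i\in\mathbb N$) under intersection and superset. Any fine filter realizing the desired equation must contain all these sets, so once the family is shown to generate a filter, $H$ is automatically both fine and minimal, and is definable from $\lambda$.

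The analytic heart is a uniform approximation estimate. Recall from the discussion preceding the theorem that if $g$ is finitely dependent with $\dep(g)\subseteq s$, then $\int g_{s,\vec y}\,d\vec F = \int g\,d\lambda$ for every $\vec y$. For general integrable $f$ I would choose a finitely dependent $g$ with $\|f-g\|_{L^1(\lambda)}$ small. Using the Fubini decomposition $2^{\mathbb N}\cong 2^{s}\times 2^{\mathbb N\setminus s}$ of the product measure,
$$\int_{2^{\mathbb N}} \left| \int (f-g)_{s,\vec y}\,d\vec F \right| \, d\lambda(\vec y) \leq \int_{2^{\mathbb N}} |f-g|\,d\lambda,$$
so, since $|r_f - r_g|\leq \|f-g\|_{L^1}$ as well, the $L^1(\lambda)$-distance of the map $\vec y \mapsto \int f_{s,\vec y}\,d\vec F$ from the constant $r_f$ is at most $2\|f-g\|_{L^1}$, \emph{uniformly} over all finite $s\supseteq\dep(g)$. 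This is an elementary, effective substitute for the reverse martingale convergence theorem together with Kolmogorov's zero–one law, which is what is happening underneath: averaging out larger blocks of coordinates drives the fiber averages to the global mean.

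With this estimate the finite intersection property follows from Markov's inequality. Given $f_1,\dots,f_k$, coordinates $i_1,\dots,i_m$, and $\e>0$, I pick finitely dependent $g_j$ with $\sum_j \|f_j-g_j\|_{L^1} < \e/2$, take $s=\{0,\dots,n-1\}$ with $n$ exceeding every $i_l$ and every $\dep(g_j)$, and note $\lambda(\{\vec y : |\int (f_j)_{s,\vec y}\,d\vec F - r_{f_j}|\geq \e\}) \leq 2\|f_j-g_j\|_{L^1}/\e$; summing over $j$ gives total measure $<1$, so some $\vec y$ satisfies all $k$ inequalities simultaneously, placing $(s,\vec y)$ in $\bigcap_j A_{f_j,\e}$ and in each fineness set. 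Thus $H$ is a fine filter. The integration property is then immediate: for each integrable $f$ and each $\e>0$ we have $A_{f,\e}\in H$, so modulo $H$ the value $\int f_{s,\vec y}\,d\vec F$ lies within $\e$ of $r_f$, and a standard-part computation exactly as in Theorem \ref{probext} yields $\sqint f\,d(\vec F,H)=r_f$. For minimality, if $H'$ is any fine filter with this property, then $\st(\int f\,d(\vec F,H'))=r_f$ forces, for rationals $q_0<r_f<q_1$, both $\{(s,\vec y):\int f_{s,\vec y}\,d\vec F<q_1\}$ and $\{(s,\vec y):\int f_{s,\vec y}\,d\vec F>q_0\}$ into $H'$; choosing $q_0,q_1$ within $\e$ of $r_f$ and intersecting places $A_{f,\e}$ in $H'$, whence $H\subseteq H'$.

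The main obstacle I anticipate is the uniform $L^1$ estimate: one must ensure the bound holds for \emph{all} admissible finite $s$ at once, not merely along a sequence of initial segments, because fineness forces the filter to see pairs $(s,\vec y)$ whose $s$ contains arbitrary prescribed coordinates. The Fubini identity is precisely what delivers this uniformity cleanly and lets Markov's inequality close out the finite intersection property; the remaining filter bookkeeping and standard-part manipulations are routine and parallel to Theorems \ref{probext} and \ref{ext}.
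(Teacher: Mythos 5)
Your proof is correct, but it reaches the finite intersection property by a genuinely different route from the paper. The paper's proof cites Jessen's theorem: for integrable $f$ on the unit interval, $S_{f,2^n}(x) = 2^{-n}\sum_{i<2^n} f(x + i/2^n) \to \int f\,d\lambda$ for almost every $x$, observes that $S_{f_i,2^n}(\vec y) = \int (f_i)_{t,\vec y}\,d\vec F$ for $t = \{0,\dots,n-1\}$, and then picks a single $\vec y$ in the (full-measure) intersection of the convergence sets for $f_0,\dots,f_m$ and a large enough initial segment $N$. Your argument replaces this appeal to an a.e.\ convergence theorem (essentially reverse martingale convergence) with the elementary chain: density of finitely dependent functions in $L^1$, the exact identity $\int g_{s,\vec y}\,d\vec F = \int g\,d\lambda$ for $s \supseteq \dep(g)$, the Fubini contraction $\int \bigl| \int (f-g)_{s,\vec y}\,d\vec F \bigr|\,d\lambda(\vec y) \leq \|f-g\|_{L^1}$, and Markov's inequality to find a common good $\vec y$. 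What your approach buys: it is self-contained (no citation of Jessen, and no need to worry about the dyadic subtlety the paper flags via Rudin's result), it gives a quantitative bound on the measure of the bad set, and the estimate holds uniformly over \emph{all} finite $s$ containing $\dep(g)$ rather than only initial segments --- which, as you note, meshes more directly with the fineness requirement. What the paper's approach buys: a stronger pointwise statement (for a.e.\ fixed $\vec y$ the fiber averages actually converge to $\int f\,d\lambda$ along initial segments), at the cost of invoking a nontrivial theorem. Both yield the same generated filter, and your minimality and standard-part arguments match the paper's. The only implicit ingredient you should make explicit is the $L^1$-density of finitely dependent functions in $L^1(2^{\N},\lambda)$, which is standard via approximation by cylinder-set simple functions.
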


We give a short proof using a result of Jessen \cite{MR1503159}:
\begin{theorem}[Jessen]
Suppose $f$ is a Lebesgue-integrable function on the unit interval.  Let 
$S_{f,n}(x) = n^{-1} \sum_{i = 0}^{n-1} f(x+i/n).$
Then $\lim_{n \to \infty} S_{f,2^n}(x) = \int f \, d\lambda$ for almost all $x$.
\end{theorem}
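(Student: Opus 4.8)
The plan is to build $H$ explicitly as the smallest filter containing the sets that force the averaging data to converge to the Lebesgue integral, and to verify that these sets have the finite intersection property using Jessen's theorem. First I would record what the inner integral actually computes. Since each $F_i$ is the principal ultrafilter on $\p(\{0,1\})$ concentrating on $\{0,1\}$, every finite product $\prod_{i\in a}F_i$ is again principal, so $\pow(\R,\prod_{i\in a}F_i)=\R$ and $\pow(\R,\vec F)=\R$ with all the embeddings $e_a$ equal to the identity. Consequently, for a finitely dependent $g$ the value $\int g\, d\vec F$ is simply the real average of $g$ over the finitely many relevant points. In particular, for any $f:2^{\N}\to\R$, any finite $s\subseteq\N$, and any $\vec y\in 2^{\N}$,
$$\int f_{s,\vec y}\, d\vec F \;=\; A_f(s,\vec y) \;:=\; 2^{-|s|}\sum_{\tau\in 2^{s}} f\bigl(\tau\cup \vec y\restriction(\N\setminus s)\bigr)\;\in\R.$$
Thus $\int f\, d(\vec F,H)=[(s,\vec y)\mapsto A_f(s,\vec y)]_H$, and $\sqint f\, d(\vec F,H)$ is the standard part of this class computed in $\pow(\R,H)$.

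For each Lebesgue-integrable $f$ and each rational $\e>0$, set
$$B_{f,\e}=\Bigl\{(s,\vec y): \bigl|A_f(s,\vec y)-\textstyle\int f\, d\lambda\bigr|<\e\Bigr\},$$
and let $E_i=\{(s,\vec y): i\in s\}$ for $i\in\N$. I would define $H$ to be the filter generated by all $B_{f,\e}$ together with all $E_i$; this is definable from $\lambda$. Any fine filter satisfying the conclusion must contain every $B_{f,\e}$ (by the standard-part computation above, separating $\int f\,d\lambda$ on both sides by rationals, each of the two resulting sets lies in the filter, and their intersection is contained in $B_{f,\e}$) as well as every $E_i$ (by fineness), so such a filter contains all generators of $H$; hence $H$ is automatically minimal. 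It remains only to show this generating family has the finite intersection property, which is the heart of the matter.

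The finite intersection property is exactly where Jessen's theorem enters. Identify $2^{\N}$ with $[0,1)$ by the measure-preserving map $\phi:\vec x\mapsto \sum_{k\geq 0}x_k2^{-(k+1)}$ (defined off a null set), and let $g=f\circ\phi^{-1}$ be the transported function. Under this identification, averaging $f$ over the first $n$ coordinates with the tail fixed is precisely the dyadic average $S_{g,2^n}$, i.e. $A_f(\{0,\dots,n-1\},\vec y)=S_{g,2^n}(\phi(\vec y))$, because adding $i/2^n$ modulo $1$ runs through all $2^n$ modifications of the leading $n$ binary digits while fixing the tail. Jessen's theorem therefore gives, for each integrable $f$, a $\lambda$-conull set of $\vec y$ along which $A_f(\{0,\dots,n-1\},\vec y)\to\int f\, d\lambda$ as $n\to\infty$. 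Given finitely many data $(f_1,\e_1),\dots,(f_r,\e_r)$ and finitely many indices $i_1,\dots,i_m$, I would intersect the finitely many conull sets, pick any $\vec y$ in the intersection, and then choose $n>\max_{l}i_l$ large enough that $|A_{f_j}(\{0,\dots,n-1\},\vec y)-\int f_j\, d\lambda|<\e_j$ for every $j\leq r$. Setting $s=\{0,\dots,n-1\}$, the pair $(s,\vec y)$ then lies in $\bigcap_{j}B_{f_j,\e_j}\cap\bigcap_{l}E_{i_l}$, establishing the finite intersection property.

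The generated filter $H$ is fine by construction and, for every integrable $f$, contains each $B_{f,\e}$, so $A_f(s,\vec y)\to\int f\, d\lambda$ along $H$ and hence $\sqint f\, d(\vec F,H)=\int f\, d\lambda$, while minimality was noted above. The main obstacle is entirely the finite intersection property: one must pass from the a.e.\ convergence supplied by Jessen, which is a single-function asymptotic statement, to the existence of one common witness $(s,\vec y)$ serving finitely many functions, tolerances, and fineness constraints at once. The only reason this succeeds is that finitely many conull sets still have full measure, so a shared good base point $\vec y$ exists, after which a single large initial segment $s$ simultaneously meets all the tolerance and fineness demands.
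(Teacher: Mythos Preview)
Your proposal does not address the stated theorem. The statement you were asked to prove is Jessen's theorem, a classical 1934 result that the paper merely \emph{cites} as an external input and does not itself prove. What you have written is instead a proof of the preceding Theorem~\ref{lebesgue} (the construction of the definable filter $H$ on $[\N]^{<\omega}\times 2^{\N}$ representing the Lebesgue integral), in which Jessen's theorem is invoked as a black box. As a proof of the displayed statement your argument is therefore circular: you assume Jessen's result at the crucial step rather than establishing it.

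That said, if your intent was to prove Theorem~\ref{lebesgue}, your argument is correct and follows essentially the same route as the paper's own proof. You take the same generators for $H$ (the sets forcing the finite averages close to $\int f\,d\lambda$, together with the fineness sets), observe minimality for the same reason, and reduce the finite intersection property to Jessen by identifying $A_f(\{0,\dots,n-1\},\vec y)$ with the dyadic Riemann-type average $S_{g,2^n}(\phi(\vec y))$ under the binary-expansion map, then intersecting finitely many conull sets to obtain a common good base point $\vec y$. Your write-up is more explicit than the paper's about why the dyadic translations $x\mapsto x+i/2^n \bmod 1$ correspond exactly to varying the first $n$ binary digits while fixing the tail, but the underlying strategy is identical.
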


We note that by a result of W.\ Rudin \cite{MR159918}, we cannot simply replace $2^n$ with $n$ in the limit.

\begin{proof}[Proof of Theorem \ref{lebesgue}]
For an integrable function $f$ and $n \in \mathbb N$, consider the set: 
$$A_{f,n} = \left\{ (s,\vec z) : \left| \int f_{s,\vec z} \, d\vec F - \int f \, d\lambda \right| < 1/n, \text{ and } n \subseteq  s \right\}$$
Any fine filter with the desired property must contain each such set. It suffices to show that this family of sets has the finite intersection property.

Let $f_0,\dots,f_m$ be integrable functions and let $k>0$ be arbitrary.  Note that, if $i \leq m$, $t = \{0,\dots,n-1\}$, and $\vec x \in \N$, then $S_{f_i,2^n}(\vec x) = \int (f_i)_{t,\vec x} \, d\vec F$. 
By Jessen's Theorem, there is $\vec y \in 2^\N$ such that for all $i \leq m$, $\lim_{n \to \infty} S_{f_i,2^n}(\vec y) = \int f_i \, d\lambda$.    Let $N$ be large enough such that for all $i \leq m$, $| S_{f_i,2^N}(\vec y) - \int f_i \, d\lambda | < 1/k$.  Then $(N,\vec y) \in A_{f_i,k}$ for all $i\leq m$.
\end{proof}

Suppose $\la (X_i,F_i) : i \in I \ra$ is a sequence such that each $F_i$ is a fine filter over $[X_i]^{<\omega}$.  Let us say a product of sets $\prod_i A_i$, $A_i \subseteq X_i$ is a \emph{standard rectangle} if $\sqint \chi_{A_i} \, dF_i$ exists for every $i \in I$.

\begin{proposition}\label{bakerlike}
Suppose $\la (X_i,F_i) : i \in \N \ra$ is a sequence such that each $F_i$ is a fine filter over $[X_i]^{<\omega}$.  There is a definable fine filter $H$ 
on $[\N]^{<\omega} \times \prod_i X_i$ 
such that for every standard rectangle $A = \prod_i A_i$,
$$\sqint \chi_A \, d(\vec F,H) = \prod_i \sqint \chi_{A_i} \, dF_i,$$
and $H$ is the minimal filter with this property.
\end{proposition}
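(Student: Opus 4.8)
The plan is to realize $H$ as the filter generated by exactly those sets that any filter with the required property is forced to contain, augmented by the fineness sets, and then to verify the finite intersection property (FIP). Fix a standard rectangle $A=\prod_i A_i$ and write $a_i=\sqint\chi_{A_i}\,dF_i\in[0,1]$ and $P=\prod_i a_i$; the infinite product converges since its terms lie in $[0,1]$, and for finite $s\subseteq\N$ I set $P_s=\prod_{i\in s}a_i$, so that $P_s\searrow P$ as $s$ grows. The first step is to evaluate the slices appearing in the transfinite integral: for a pair $(s,\vec y)$, the finitely dependent function $(\chi_A)_{s,\vec y}$ is identically $0$ unless $\vec y(i)\in A_i$ for every $i\notin s$, in which case it equals $\prod_{i\in s}\chi_{A_i}$. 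Thus $\int(\chi_A)_{s,\vec y}\,d\vec F=0$ in the first case, while in the second case Proposition \ref{prodchar} together with an induction on $|s|$, Proposition \ref{welldef}, and the fact that the canonical embeddings preserve standard parts show that $\int(\chi_A)_{s,\vec y}\,d\vec F$ has standard part $P_s$. In particular this slice integral always has a well-defined standard part.

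For rationals $q_0<P<q_1$, put $E_{A,q_0,q_1}=\{(s,\vec y):q_0<\int(\chi_A)_{s,\vec y}\,d\vec F<q_1\}$. I would first observe that any fine filter $H'$ with the stated property must contain every such $E_{A,q_0,q_1}$: unwinding the definition of the standard part in the iterated reduced power, $\sqint\chi_A\,d(\vec F,H')=P$ says precisely that for all rationals $q_0<P<q_1$ both $\{(s,\vec y):q_0<g(s,\vec y)\}$ and $\{(s,\vec y):g(s,\vec y)<q_1\}$ lie in $H'$, where $g(s,\vec y)=\int(\chi_A)_{s,\vec y}\,d\vec F$, and their intersection is $E_{A,q_0,q_1}$. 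Accordingly I define $H$ to be the filter generated by all the $E_{A,q_0,q_1}$ (over standard rectangles $A$ and straddling rationals) together with the fineness sets $\{(s,\vec y):i\in s\}$ for $i\in\N$. This immediately gives minimality, since $H$ is then contained in every fine filter with the property; and conversely, once FIP is known, $E_{A,q_0,q_1}\in H$ forces $q_0<[g]_H<q_1$ for all straddling rationals, whence $\sqint\chi_A\,d(\vec F,H)=P$, so $H$ has the property. The filter $H$ is definable because its generators are given uniformly from the data $\la(X_i,F_i):i\in\N\ra$.

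The substance of the argument is the FIP. Given finitely many standard rectangles $A^1,\dots,A^k$ with straddling rationals $q_0^j<P^j<q_1^j$ and finitely many fineness indices collected in a finite $T\subseteq\N$, I must exhibit a single pair $(s,\vec y)$ lying in all the corresponding sets. I take $s=T\cup\{0,\dots,N\}$ for a large $N$ to be specified, and I choose $\vec y$ as follows. Taking $N$ large forces $P^j_s<q_1^j$ for every $j$ (using $P^j_s\searrow P^j<q_1^j$), and one checks that the upper bound $\int(\chi_{A^j})_{s,\vec y}\,d\vec F<q_1^j$ then holds whatever $\vec y$ is. For the lower bounds, the indices $j$ with $q_0^j<0$ impose no constraint, since the slice integral is always nonnegative; only the indices $j$ with $q_0^j\ge 0$ --- which necessarily have $P^j>0$ --- require that $\vec y(i)\in A^j_i$ for all $i\notin s$, so that the slice integral has standard part $P^j_s>q_0^j$.

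The crux, and the step I expect to be the main obstacle, is to choose a single $\vec y$ meeting all these constraints at once, namely $\vec y(i)\in\bigcap_{j:q_0^j\ge0}A^j_i$ for every $i\notin s$. This reduces to a measure-theoretic compatibility fact. Convergence of $\prod_i a^j_i$ to a positive value forces $\sum_i(1-a^j_i)<\infty$, hence $a^j_i\to 1$, for each of the finitely many relevant $j$; so for all sufficiently large $i$ one has $\sum_{j:q_0^j\ge0}(1-a^j_i)<1$. Using the pointwise bound $\chi_{\bigcap_j A^j_i}\ge 1-\sum_j\chi_{X_i\setminus A^j_i}$ and the monotonicity of the $F_i$-integral for the fine filter $F_i$, the standard part of $\int\chi_{\bigcap_j A^j_i}\,dF_i$ is at least $1-\sum_j(1-a^j_i)>0$; in particular $\bigcap_{j:q_0^j\ge0}A^j_i\neq\emptyset$. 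Enlarging $N$ to absorb the finitely many small $i$ where this intersection could be empty, I then pick $\vec y(i)$ from this nonempty intersection for each $i\notin s$ and let $\vec y$ be arbitrary on $s$. The resulting $(s,\vec y)$ lies in every member of the finite subfamily, which establishes the FIP and completes the construction.
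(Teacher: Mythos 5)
Your proof is correct and follows essentially the same route as the paper's: generate $H$ from the sets forced by the desired standard integrals (together with the fineness sets), reduce each slice integral to a finite product via Proposition \ref{prodchar}, and establish the finite intersection property by taking $s$ large and choosing $\vec y(i)$ in the nonempty tail intersections $\bigcap_j A^j_i$, whose nonemptiness follows from the same union bound $\sum_j(1-a^j_i)<1$ that the paper uses. Your two-sided rational bounds $E_{A,q_0,q_1}$ in place of the paper's $\e$-bands $S_{A,\e}$, and your explicit minimality argument, are only cosmetic differences.
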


\begin{proof}
Let $A^0,\dots,A^{m-1}$ be standard rectangles, $A^j = \prod_{i \in \N} A^j_i$.
Let $r^j_i = \sqint \chi_{A^j_i} \, dF_i$.  Since $0 \leq r^j_i \leq 1$ for each $i$ and $j$, the sequence of initial products $\la r^j_0\cdots r^j_n : n \in \N \ra$ is a nondecreasing sequence of nonnegative reals, so $\prod_i r^j_i$ converges.  If $\prod_i r^j_i > 0$, then clearly $\liminf_i r^j_i = 1$.

Suppose $j$ is such that $\prod_i r^j_i = 0$.  Let $\e > 0$.  There is $n$ such that $\prod_{i<n} r^j_i < \e$.  Suppose $\vec x = \la x_i : i \in \N \ra \in \prod_i X_i$.  If $\la x_i : i \geq n \ra \in \prod_{i \geq n} A^j_i$, then $(\chi_{A^j})_{n,\vec x} = \chi_{A_0 \times \dots \times A_{n-1}}$ on $X_0 \times \dots \times X_{n-1}$.  Otherwise, if $\la x_i : i \geq n \ra \notin \prod_{i \geq n} A^j_i$, $(\chi_{A^j})_{n,\vec x} = 0$ on $X_0 \times \dots \times X_{n-1}$.  
Applying Proposition \ref{prodchar}, we get that for every $\vec x$, $\int (\chi_{A^j})_{n,\vec x} \, d\vec F< \e$.  Hence, for any fine filter $H$, $\sqint \chi_{A^j} \, d(\vec F,H) = 0$.

Suppose then, without loss of generality, that $\prod_i r^j_i > 0$ for all $j < m$.  Let $n$ be large enough such that for each $j < m$ and each $i > n$, $1- r^j_i < 1/m$.  Then for each $i > n$, there is $x_i \in \bigcap_{j<m} A^j_i$.  Hence if $\vec x$ is such that $\vec x(i) = x_i$ for $i>n$, then for each $j<m$ we have $ (\chi_{A^j})_{n,\vec x} = \chi_{A_0 \times \dots \times A_{n-1}}$ on $X_0 \times \dots \times X_{n-1}$.  Now let $\e > 0$ be arbitrary.  Let $n' \geq n$ be such that for each $j<m$,
$r^j_0 \cdots r^j_{n'} - \prod_{i \in \N} r^j_i < \e$.  Then
$$-\e < \int (\chi_{A^j})_{n',\vec x} \, d\vec F - \prod_i r^j_i  < \e.$$

For a standard rectangle $A$, let $r_A$ be the infinite product of its side lengths.  The above argument shows that the collection of sets 
$$S_{A,\e} = \left\{ (n,\vec x) :  -\e< \int (\chi_{A})_{n,\vec x} - r_A < \e \right\},$$
for standard rectangles $A$ and real $\e>0$, generates a fine filter on $[\N]^{<\omega} \times \prod_i X_i$, giving the desired conclusion.
\end{proof}

Baker \cite{10.2307/2048779} showed that it is possible to define a translation-invariant ``Lebesgue measure'' on $\R^\omega$ that is not $\sigma$-finite but assigns to rectangles of the form $\prod_{i} (a_i, b_i)$ their correct volume $\prod_{i} (b_i-a_i)$, provided that this infinite product converges .  We note that the restriction of Baker's measure to the cube $[0,1]^\omega$ gives a probability space.  The reason this does not contradict the ``no-go'' result mentioned at the beginning of \S\ref{geomsec} is that is not a normed real vector space---the ``Euclidean'' norm returns $\infty$ on many vectors therein.  The Banach spaces $\ell^p$ are contained in $\R^\omega$, but Baker's measure gives any open ball in $\ell^p$ measure zero.

The measure on $\prod_i X_i$ induced by the filter integral of Proposition \ref{bakerlike} generalizes this result to arbitrary products. Moreover, it assigns the correct measure not only to products of intervals, but to products of arbitrary measurable sets.  If each $F_i$ is the filter corresponding to a translation-invariant measure on $X_i$, then the measure of Proposition \ref{bakerlike} is also translation-invariant on the algebra generated by standard rectangles:

\begin{proposition}
Suppose $\la (X_i,\mu_i,G_i) : i \in \N \ra$ is a sequence such that for each $i$, $\mu_i$ is a finitely additive real-valued probability measure on $X_i$ without point masses, and $G_i$ is a group of $\mu_i$-invariant transformations of $X_i$.  Let $G = \prod_i G_i$, and let $G$ act on $\prod_i X_i$ by $\vec g(\vec x) = \la g_0(x_0),g_1(x_1),\dots \ra$.

Let $F_i = F_{\mu_i}$, given according to Theorem \ref{probext}, and let $H$ be the filter given by Proposition \ref{bakerlike} with respect to the sequence $\la F_i : i \in \N\ra$. Let $\mathcal A$ be the algebra of subsets of $\prod_i X_i$ generated by standard rectangles.  Then for each $A \in\mathcal A$ and $g \in G$,
\begin{enumerate}
\item $\sqint \chi_A \, d(\vec F,H)$ exists;
\item $\sqint \chi_A \, d(\vec F,H) =  \sqint \chi_{g[A]} \, d(\vec F,H)$.
\end{enumerate}
\end{proposition}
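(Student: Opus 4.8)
The plan is to settle the claim first for standard rectangles and then bootstrap to the whole algebra $\mathcal A$ by a finite inclusion--exclusion expansion. Let $A = \prod_i A_i$ be a standard rectangle. Since $\sqint \chi_{A_i} \, dF_i$ exists and $F_i = F_{\mu_i}$, Proposition \ref{probext_measurable} tells us each $A_i$ is $\mu_i$-measurable with $\sqint \chi_{A_i} \, dF_i = \bar\mu_i(A_i)$. Because each $g_i$ is a measure-preserving bijection and $g_i^{-1} \in G_i$ is as well, a short computation with the definitions of $\mu_i^{\pm}$ (using $g_i[A_i] \subseteq B \Leftrightarrow A_i \subseteq g_i^{-1}[B]$) shows that $g_i$ preserves both $\mu_i$-measurability and the value $\bar\mu_i$. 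Hence $g[A] = \prod_i g_i[A_i]$ is again a standard rectangle with $\sqint \chi_{g_i[A_i]} \, dF_i = \sqint \chi_{A_i} \, dF_i$ for every $i$. Proposition \ref{bakerlike} then yields both that $\sqint \chi_A \, d(\vec F, H)$ exists and that it equals $\sqint \chi_{g[A]} \, d(\vec F, H)$, as both are the same convergent infinite product $\prod_i \bar\mu_i(A_i)$.

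Next I would record the closure property that drives the general case: standard rectangles are closed under finite intersection, since $\prod_i A_i \cap \prod_i B_i = \prod_i (A_i \cap B_i)$ and the $\mu_i$-measurable sets form an algebra $\bar{\mathcal A}_i$. (This is exactly where we use $F_i = F_{\mu_i}$ rather than an arbitrary fine filter, sidestepping the general failure of the standard-integrable functions to form a ring.) Now fix $A \in \mathcal A$ and write it as a Boolean combination of finitely many standard rectangles $R_1, \dots, R_m$. For $I \subseteq \{1, \dots, m\}$ put $R_I = \bigcap_{j \in I} R_j$, with $R_\emptyset = \prod_i X_i$; each $R_I$ is a standard rectangle by the closure property. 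Expanding the characteristic function of each atom $\bigcap_{j \in P} R_j \cap \bigcap_{j \notin P}(\prod_i X_i \setminus R_j)$ via the product $\prod_{j \notin P}(1 - \chi_{R_j})$ exhibits $\chi_A$ as a finite $\Z$-linear combination of the indicators $\chi_{R_I}$.

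Claim (1) is then immediate: each $\chi_{R_I}$ has a standard $(\vec F, H)$-integral by Proposition \ref{bakerlike}, and since the functions with a standard integral form a vector space (by the linearity properties recorded in \S\ref{sec:std}), $\chi_A$ has one as well. For claim (2) I would use that $g$ is a bijection of $\prod_i X_i$ and so commutes with all Boolean operations, giving $g[R_I] = \bigcap_{j \in I} g[R_j]$; thus $\chi_{g[A]}$ is the same $\Z$-linear combination of the $\chi_{g[R_I]}$, with identical coefficients. As each $g[R_I]$ is a standard rectangle with $\sqint \chi_{g[R_I]} \, d(\vec F, H) = \sqint \chi_{R_I} \, d(\vec F, H)$ by the rectangle case, taking standard parts term by term gives $\sqint \chi_{g[A]} \, d(\vec F, H) = \sqint \chi_A \, d(\vec F, H)$.

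The step I expect to need the most care is the reduction to rectangles, namely verifying that the standard integral genuinely exists on all of $\mathcal A$ and not merely on rectangles. The filter $H$ of Proposition \ref{bakerlike} is built only from data about rectangles, and the class of functions with a standard integral is in general not closed under multiplication; the inclusion--exclusion expansion is precisely what lets us avoid needing such closure, by writing every $\chi_A$ as a \emph{linear} combination of rectangle indicators rather than a product of them. The single fact that must hold for this to go through is that intersections of standard rectangles are again standard rectangles, which is guaranteed here because the standard sets for $F_{\mu_i}$ coincide with the $\mu_i$-measurable sets.
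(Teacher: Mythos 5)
Your proof is correct, and it rests on the same two pillars as the paper's: the rectangle case via Proposition \ref{probext_measurable} (measurability of each $A_i$, preservation of $\bar\mu_i$ under $g_i$, then Proposition \ref{bakerlike}), and the closure of standard rectangles under finite intersection, which is exactly where $F_i = F_{\mu_i}$ is needed. Where you genuinely diverge is in the passage from rectangles to the full algebra $\mathcal A$. The paper works at the level of sets: it writes $A$ in disjunctive normal form as a disjoint union of Boolean combinations of rectangles, proves a cancellation claim for disjoint unions of invariant sets, and then runs an induction on the number of complemented rectangles in a Boolean combination. You instead work at the level of indicator functions, expanding $\prod_{j \notin P}(1-\chi_{R_j})$ to exhibit $\chi_A$ as a single $\Z$-linear combination of indicators $\chi_{R_I}$ of intersections, and then invoke linearity of the standard integral together with $\chi_{g[A]} = \chi_A \circ g^{-1}$. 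Your route is shorter and makes explicit the point the paper leaves implicit: only \emph{linearity} of the standard integral is needed, not closure under multiplication, because inclusion--exclusion converts the product structure of the atoms into a linear combination of rectangle indicators. The paper's set-level induction buys essentially the same thing but packages it as repeated applications of finite additivity. Both arguments silently use that $R_\emptyset = \prod_i X_i$ is a standard rectangle, which you correctly flag; the one step you should make sure is fully justified is that $g_i$ maps the underlying algebra $\mathcal A_i$ into itself (so that the $\inf$/$\sup$ defining $\mu_i^{\pm}$ range over the same family after applying $g_i^{-1}$), but this is implicit in the hypothesis that $G_i$ consists of $\mu_i$-invariant transformations and is assumed equally by the paper.
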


\begin{proof}
Let us say that a set $A \subseteq \prod_i X_i$ is \emph{$G$-invariant} if $\sqint \chi_A \, d(\vec F,H)$ exists and  $\sqint \chi_A \, d(\vec F,H) =  \sqint \chi_{g[A]} \, d(\vec F,H)$ for each $g \in G$.

\begin{claim}Standard rectangles are $G$-invariant.\end{claim}
\begin{proof}
Suppose $A = \prod_i A_i$ is a standard rectangle.  Then $\sqint \chi_{A_i} \, dF_i$ exists for each $i$, and by Proposition \ref{probext_measurable}, this means that $A_i$ is $\mu_i$-measurable, and thus its $G$-images have the same measure.  Thus for any $g = \la g_0,g_1,\dots\ra \in G$, $g[A]$ is a standard rectangle, and 
 $$\sqint \chi_A \, d(\vec F,H) =  \prod_i \mu_i(A_i) = \prod_i \mu_i(g_i[A_i]) = \sqint \chi_{g[A]} \, d(\vec F,H). \qedhere$$
 \end{proof}
 
 \begin{claim}Suppose $A_1,\dots,A_n$ are pairwise disjoint.  If each $A_i$ is $G$-invariant, then so is $A_1 \cup\dots\cup A_n$.  If each $A_i$ is $G$-invariant for $i<n$, and $A_1 \cup\dots\cup A_n$ is $G$-invariant, then so is $A_n$. \end{claim}
 \begin{proof}
  This follows from the fact that for any bijection $g$, $\int \chi_{g[A_1 \cup \dots \cup A_n]} \, d(\vec F,H) = \int \chi_{g[A_1]} \,d(\vec F,H) + \dots +\int \chi_{g[A_n]} \,d(\vec F,H)$.
 \end{proof}
 
 \begin{claim}Boolean combinations of standard rectangles (finite intersections of standard rectangles or their complements) are $G$-invariant.\end{claim}
 \begin{proof}
Let $A_1,\dots,A_n$ be standard rectangles.  Then $A_1 \cap\dots\cap A_n$ is also a standard rectangle and is thus $G$-invariant.  It follows from the previous claim that for any $j$, $1 \leq j \leq n$, 
$$(A_1 \cap \dots \cap A_{j-1} \cap A_{j+1} \cap \dots\cap A_n ) \setminus (A_1 \cap\dots\cap A_n)$$
 is $G$-invariant.  Thus each Boolean combination of the $A_i$'s where at most one set is complemented is $G$-invariant.   Suppose inductively that all Boolean combinations of the $A_i$'s where at most $k-1$ sets are complemented yields a $G$-invariant set.  Consider a combination in which $k$ sets are complemented.  For ease of notation assume it is 
$$A_1 \cap\dots\cap A_{n-k} \setminus (A_{n-k+1} \cup\dots\cup A_n).$$
This can be written as $A_1 \cap\dots\cap A_{n-k}$ minus the disjoint union of all Boolean combinations of the $A_i$'s in which $A_1,\dots,A_{n-k}$ appear positively, and at least one other $A_i$ appears positively.  Thus by the previous claim, the desired Boolean combination with $k$ complementations is also $G$-variant.  At the end of the induction, we get that all Boolean combinations are $G$-invariant.
 \end{proof}
 
To finish, take any set in the algebra generated by standard rectangles.  It can be written in \emph{disjunctive normal form} as a disjoint union of Boolean combinations of standard rectangles.  Thus by the above claims, it too is $G$-invariant.
\end{proof}

\bibliographystyle{amsplain.bst}
\bibliography{ultra.bib}

\end{document}